\newtheorem{Theorem}{Theorem}[section]
\newtheorem{Corollary}[Theorem]{Corollary}
\newtheorem{Lemma}[Theorem]{Lemma}
\newtheorem{Proposition}[Theorem]{Proposition}
 { \theoremstyle{definition}
\newtheorem{Definition}[Theorem]{Definition}
\newtheorem{Example}[Theorem]{Example}
\newtheorem{Remark}[Theorem]{Remark} }
\renewcommand{\t}{\mathfrak{t}}
\renewcommand{\a}{\textswab{a}}
\newcommand{\TT}{\mathscr{T}}
\newcommand{\TTS}{\mathscr{T}_\mathrm{sstd}}
\newcommand{\T}{\mathrm{T}}
\newcommand{\sgn}{\mathrm{sgn}}
\newcommand{\Char}{\operatorname{\mathrm{char}}}
\newcommand{\Hom}{\operatorname{Hom}}
\newcommand{\stab}{\operatorname{stab}}
\newcommand{\sym}[1]{\mathfrak{S}_{#1}}
\newcommand{\Ind}{\operatorname{\mathrm{Ind}}}
\newcommand{\F}{\mathbb{F}}
\newcommand{\bb}[1]{\mathbbm{{#1}}}
\newcommand{\R}{\mathscr{R}}
\newcommand{\C}{\mathscr{C}}
\newcommand{\dR}{\text{\Fontauri d}}
\renewcommand{\O}{\mathcal{O}}
\newcommand{\e}{\varepsilon}
\newcommand{\tl}[2]{{#1}^{+#2}} %\translate
\newcommand{\tlsym}[2]{\mathfrak{S}_{#1}^{+#2}}
\newcommand{\tlxi}[2]{\xi_{#1}^{+#2}}
\newcommand{\Z}{\mathbb{Z}}
\newcommand{\Q}{\mathbb{Q}}
\newcommand{\squ}{\mathbin{{\text{\footnotesize $\sqcup$}}}}
\newcommand{\Jhom}{\upvartheta}
\newcommand{\hJhom}{\hat{\Jhom}}
\newcommand{\JHOM}{\Uptheta}
\newcommand{\hJHOM}{{\JHOM}}
\begin{document}
%\allowdisplaybreaks

\newcommand{\arXivNumber}{1606.00542}

\renewcommand{\thefootnote}{}

\renewcommand{\PaperNumber}{038}

\FirstPageHeading

\ShortArticleName{Homomorphisms from Specht Modules to Signed Young Permutation Modules}

\ArticleName{Homomorphisms from Specht Modules\\ to Signed Young Permutation Modules\footnote{This paper is a~contribution to the Special Issue on the Representation Theory of the Symmetric Groups and Related Topics. The full collection is available at \href{https://www.emis.de/journals/SIGMA/symmetric-groups-2018.html}{https://www.emis.de/journals/SIGMA/symmetric-groups-2018.html}}}

\Author{Kay Jin LIM~$^\dag$ and Kai Meng TAN~$^\ddag$}

\AuthorNameForHeading{K.J.~Lim and K.M.~Tan}

\Address{$^\dag$~Division of Mathematical Sciences, Nanyang Technological University,\\
\hphantom{$^\dag$}~SPMS-PAP-03-01, 21 Nanyang Link, 637371 Singapore}
\EmailD{\href{mailto:limkj@ntu.edu.sg}{limkj@ntu.edu.sg}}

\Address{$^\ddag$~Department of Mathematics, National University of Singapore,\\
\hphantom{$^\dag$}~Block S17, 10 Lower Kent Ridge Road, 119076 Singapore}
\EmailD{\href{mailto:tankm@nus.edu.sg}{tankm@nus.edu.sg}}

\ArticleDates{Received July 14, 2017, in final form April 18, 2018; Published online April 25, 2018}

\Abstract{We construct a class $\hJHOM_{\R}$ of homomorphisms from a Specht module $S_\Z^{\lambda}$ to a signed permutation module $M_\Z(\alpha|\beta)$ which generalises James's construction of homomorphisms whose codomain is a Young permutation module. We show that any $\phi \in \Hom_{\Z\sym{n}}\big(S_\Z^\lambda, M_\Z(\alpha|\beta)\big)$ lies in the $\mathbb{Q}$-span of $\hJHOM_{\text{sstd}}$, a subset of $\hJHOM_{\R}$ corresponding to semistandard $\lambda$-tableaux of type $(\alpha|\beta)$. We also study the conditions for which $\hJHOM^{\F}_{\mathrm{sstd}}$~-- a~subset of $\Hom_{\F\sym{n}}\big(S_{\F}^\lambda,M_{\F}(\alpha|\beta)\big)$ induced by $\hJHOM_{\mathrm{sstd}}$~-- is linearly independent, and show that it is a~basis for $\Hom_{\F\sym{n}}\big(S_\F^\lambda,M_\F(\alpha|\beta)\big)$ when $\F\sym{n}$ is semisimple.}

\Keywords{symmetric group; Specht module; signed Young permutation module; homomorphism}

\Classification{20C30}

\renewcommand{\thefootnote}{\arabic{footnote}}
\setcounter{footnote}{0}

\section{Introduction}

Modular representation theory of finite groups, unlike its ordinary counterpart, is not at all well understood. Even for ubiquitous groups like the symmetric groups, many fundamental questions remain open. It is therefore important to understand the naturally occurring representations, as these may provide more information about the modular representation theory in general.

Among the naturally occurring representations of the symmetric groups, the Young permutation modules are perhaps the most well-known. James \cite{GJ} studied these modules in detail and exploited their knowledge to obtain important information such as those about the Specht modules.

When the characteristic of the underlying field is not $2$, the signed permutation modules are a natural generalisation of the Young permutation modules. Their indecomposable summands, known as signed Young modules, are first studied by Donkin in~\cite{D}. Subsequently, signed Young modules are shown to be related to irreducible Specht modules. More specifically, Hemmer~\cite{H} showed that irreducible Specht modules are signed Young modules and, recently, Danz and the first author~\cite{DL} described the label explicitly. Signed permutation modules for Iwahori--Hecke algebras of type $A$ are also studied by Du and Rui in~\cite{DR}.

In this paper, we construct a class $\hJHOM_{\R}$ of homomorphisms from the Specht module $S_\Z^{\lambda}$ to the signed permutation module $M_\Z(\alpha|\beta)$, just like James did for the (unsigned) Young permutation module $M^{\mu}_\F$. A subset of this class, denoted $\hJHOM_{\mathrm{sstd}}$, is a direct generalisation of James's semistandard homomorphisms. James's semistandard homomorphisms form a basis for $\Hom_{\F\sym{n}}\big(S_\F^\lambda, M_\F^\mu\big)$, unless $\Char(\F)=2$ and $\lambda$ is $2$-singular. It is therefore natural to seek generalisation of this statement for our $\hJHOM_{\mathrm{sstd}}$. We show that $\Hom_{\Z\sym{n}}\big(S_\Z^\lambda, M_\Z(\alpha|\beta)\big)$ is contained in the $\Q$-span of $\hJHOM_{\mathrm{sstd}}$, and provide a sufficient condition for $\hJHOM_{\mathrm{sstd}}$ to be linearly independent when reduced modulo $p$. In particular, we prove that when $\F\sym{n}$ is semisimple, with $\Char(\F) = p$, then reducing $\hJHOM_{\mathrm{sstd}}$ modulo~$p$ does indeed give a basis for $\Hom_{\F\sym{n}}\big(S_\F^\lambda, M_\F(\alpha|\beta)\big)$.

We found many examples in our study which show that the signed permutation modules behave in a much more unpredictable way than the unsigned ones. We include some of these in Examples~\ref{Eg: James res}, \ref{E:Jameseg}, \ref{Eg: hom char} and~\ref{E:notli}.

The paper is organised as follows. In the next section, we give a quick introduction of the background.
In Section~\ref{S:sstd hom}, we generalise James's construction to obtain homomorphisms from the Specht module $S_\Z^{\lambda}$ to the signed permutation module $M_\Z(\alpha|\beta)$, and in Section~\ref{S:basis?}, we look into the conditions for which a subset of our constructed homomorphisms, corresponding to the semistandard $\lambda$-tableaux of type $(\alpha|\beta)$, will be a basis for $\Hom_{\F\sym{n}}\big(S^{\lambda}_\F, M_\F(\alpha|\beta)\big)$.

\section{Preliminaries}

In this section, we provide the necessary background and introduce some notations that we shall use in this paper.

Throughout, we fix a field $\F$ of arbitrary characteristic.

\subsection{Symmetric groups}

Let $X$ be a finite set. The symmetric group $\sym{X}$ on $X$ is the group of bijections from $X$ to $X$ under composition of functions. By convention, $\sym{\varnothing}$ is the trivial group. When $Y$ is a non-empty subset of $X$, we view $\sym{Y}$ as a subgroup of $\sym{X}$ by identifying an element of $\sym{Y}$ with its extension that sends $x$ to $x$ for all $x \in X \setminus Y$.

Let $X \subseteq \mathbb{Z}^+$, and $k \in \mathbb{Z}^+$. Define the $k$-translated subset $\tl{X}{k}$ of $\mathbb{Z}^+$ by
\begin{gather*}\tl{X}{k} = \{ x+ k\colon x \in X \}. \end{gather*}
Let $\tlsym{X}{k}=\big\{\tl{f}{k}\colon f\in\sym{X}\big\}$ where $\tl{f}{k}\colon \tl{X}{k}\to\tl{X}{k}$ is defined by
\begin{gather*}
\tl{f}{k}(x+k) = f(x) + k
\end{gather*}
for all $x \in X$. Clearly, $f \mapsto \tl{f}{k}$ is a group isomorphism from $\sym{X}$ to $\tlsym{X}{k}$. For any $S \subseteq \sym{X}$, write $\tl{S}{k}$ for $\{ \tl{s}{k}\colon s \in S\}$.

For $n \in \mathbb{Z}^+$, we write $\sym{n}$ for $\sym{\{1,2,\dotsc, n\}}$, the usual symmetric group on $n$ letters.

\subsection{Compositions and Young subgroups}

A composition is a finite sequence of positive integers. Let $\lambda = (\lambda_1,\lambda_2,\dotsc, \lambda_r)$ be a composition. Then $\ell(\lambda) = r$. If $\sum\limits_{i=1}^{\ell(\lambda)} \lambda_i = n$, we say that $\lambda$ is a composition of $n$, and write $|\lambda| = n$. If in addition $\lambda_1 \geq \lambda_2 \geq \dotsb \geq \lambda_{\ell(\lambda)}$, then we call $\lambda$ a partition of $n$. The unique composition (or partition) of 0 is denoted by $\varnothing$, and $\ell(\varnothing)=0$.

To a composition $\lambda$ of $n$, we associate the Young subgroup $\sym{\lambda}$ of $\sym{n}$, where{\samepage
\begin{gather*}
\sym{\lambda} = \sym{\lambda_1} \tlsym{\lambda_2}{s(\lambda)_1} \dotsm \tlsym{\lambda_{\ell(\lambda)}}{s(\lambda)_{\ell(\lambda)-1}} \cong \sym{\lambda_1} \times \sym{\lambda_2} \times \dotsb \times \sym{\lambda_{\ell(\lambda)}}
\end{gather*}
and $s(\lambda)_j = \sum\limits_{i=1}^j \lambda_i$ for all $1 \leq j \leq \ell(\lambda)-1$.}

Let $(\alpha|\beta)$ be a bicomposition of $n$, i.e., $\alpha$ and $\beta$ are compositions and $|\alpha| + |\beta| = n$. One may view $(\alpha|\beta)$ as a composition of $n$ by concatenating $\alpha$ with $\beta$, and so we have the associated Young subgroup
\begin{gather*}
\sym{\alpha|\beta} = \sym{\alpha} \tlsym{\beta}{|\alpha|} \cong \sym{\alpha} \times \sym{\beta}.\end{gather*}

\subsection{Young diagrams}

For a partition $\lambda$, its Young diagram $[\lambda]$ is defined as
\begin{gather*}
[\lambda] = \big\{ (i,j) \in \mathbb{Z}^2 \colon 1\leq i \leq \ell(\lambda),\ 1 \leq j \leq \lambda_i \big\}.
\end{gather*}

The conjugate partition of $\lambda$, denoted $\lambda'$, is the partition whose Young diagram $[\lambda'] = \{ (i,j)\colon (j,i) \in [\lambda] \}$.

\subsection{Tableaux} \label{SS:Tableaux}

Let $\lambda$ be a partition of $n$. Define formally a $\lambda$-tableau $\t$ as a bijective function $\t\colon [\lambda] \to \{1,2,\dotsc,n \}$. We usually view this as a labelling of the elements of $[\lambda]$ by numbers $1,2,\dotsc,n$, such that each number appears exactly once. Denote the set of $\lambda$-tableaux by $\TT(\lambda)$.

Through a fixed $\lambda$-tableau $\t$ we obtain a group isomorphism $\sym{n} \cong \sym{[\lambda]}$ where we identify $\sigma\in\sym{n}$ with $\t^{-1} \circ \sigma \circ \t\in \sym{[\lambda]}$. Let $R_{[\lambda]}$ and $C_{[\lambda]}$ be the row and column stabilizers of $[\lambda]$, i.e.,
\begin{gather*}
R_{[\lambda]}= \{ f \in \sym{[\lambda]}\colon \forall\, (i, j) \in [\lambda],\, \exists\, (i,j')\in [\lambda],\, f(i,j) = (i,j') \}, \\
C_{[\lambda]}= \{ f \in \sym{[\lambda]}\colon \forall\, (i, j) \in [\lambda],\, \exists\, (i',j)\in [\lambda],\, f(i,j) = (i',j) \}.
\end{gather*}
Under the above group isomorphism, $R_{[\lambda]}$ corresponds to the row stabilizer $R_{\t}$ of $\t$, i.e., $R_{\t} = \t \circ R_{[\lambda]} \circ \t^{-1} \subseteq \sym{n}$, and similarly, $C_{[\lambda]}$ corresponds to the column stabilizer $C_{\t}$ of $\t$, i.e., $C_{\t}=\t\circ C_{[\lambda]}\circ \t^{-1}$.

Let $\t^{\lambda}$ denote the \textit{initial $\lambda$-tableau}, defined by $\t^{\lambda}(i,j) = \lambda_1 + \dotsb + \lambda_{i-1} + j$ for all $(i,j) \in [\lambda]$. Observe that $R_{\t^{\lambda}} = \sym{\lambda}$.

Post-composition of $\lambda$-tableaux by elements of the symmetric group $\sym{n}$ gives a well-defined, faithful and transitive left action of $\sym{n}$ on $\TT(\lambda)$, i.e., $\sigma \cdot \t = \sigma \circ \t$ for all $\sigma \in \sym{n}$ and $\t \in \TT(\lambda)$. Observe that \begin{gather*}R_{\sigma \cdot \t} = R_{\sigma \circ \t} = (\sigma \circ \t) \circ R_{[\lambda]} \circ (\sigma \circ \t)^{-1} = \sigma \circ R_{\t} \circ \sigma^{-1} = \sigma R_{\t} \sigma^{-1},\end{gather*} and similarly, $C_{\sigma \cdot \t} = \sigma C_{\t} \sigma^{-1}$. In particular, the row stabilizers of the $\lambda$-tableaux are conjugate subgroups of $\sym{\lambda}$.

A $\lambda$-tableau $\t$ is {\em standard} if it is increasing along each row and down each column, i.e., we have both $\t(i,j) < \t(i,j')$ and $\t(i,j)< \t(i',j)$ for all $(i,j),(i,j'),(i',j) \in [\lambda]$ with $j < j'$ and $i < i'$. Write $\TT_{\rm{std}}(\lambda)$ for the set of all standard $\lambda$-tableaux.

Let $(\alpha|\beta)$ be a bicomposition of $n$ and $\lambda$ be a partition of $n$. We view a $\lambda$-tableau $\T$ of type $(\alpha|\beta)$ as a colouring of the nodes of $[\lambda]$ by the colours $\bb{c}_1,\bb{c}_2,\dotsc, \bb{c}_{\ell(\alpha)},\bb{d}_1,\bb{d}_2,\dotsc,\bb{d}_{\ell(\beta)}$ such that there are exactly $\alpha_i$ nodes of colour $\bb{c}_i$ and $\beta_j$ nodes of colour $\bb{d}_j$ for all $1 \leq i\leq \ell(\alpha)$ and $1\leq j\leq \ell(\beta)$. Formally, $\T$ is a function \begin{gather*}\T\colon \ [\lambda]\to \{\bb{c}_1,\bb{c}_2,\dotsc, \bb{c}_{\ell(\alpha)},\bb{d}_1,\bb{d}_2,\dotsc,\bb{d}_{\ell(\beta)} \}\end{gather*} such that $|\T^{-1}(\{\bb{c}_i\})|=\alpha_i$ and $|\T^{-1}(\{\bb{d}_j\})|=\beta_j$ for all $1 \leq i\leq \ell(\alpha)$ and $1\leq j\leq \ell(\beta)$. We write $\TT(\lambda, (\alpha|\beta))$ for the set of all $\lambda$-tableaux of type $(\alpha|\beta)$.

Let $\t_0 \in \TT(\lambda)$ be a fixed $\lambda$-tableau (which may or may not be the initial $\lambda$-tableau $\t^{\lambda}$ defined above). We define the canonical $\lambda$-tableau of type $(\alpha|\beta)$ associated to $\t_0$, denoted as $\T_0$, as follows
\begin{gather*}
\T_0(i,j) = \begin{cases}\bb{c}_k&\text{if $\displaystyle{\sum^{k-1}_{a=1} \alpha_a < \t_0(i,j) \leq \sum^{k}_{a=1} \alpha_a}$ for some $k$,}\\ \bb{d}_k&\text{if $\displaystyle{|\alpha|+\sum^{k-1}_{b=1} \beta_b < \t_0(i,j) \leq |\alpha|+\sum^{k}_{b=1} \beta_b}$ for some $k$,}\end{cases} \end{gather*} namely, the nodes labelled $1,\ldots,\alpha_1$ by $\t_0$ are coloured $\bb{c}_1$, those labelled $\alpha_1 + 1,\dotsc, \alpha_1 + \alpha_2$ are coloured $\bb{c}_2$, and so on.

We also have a transitive left action on $\TT(\lambda, (\alpha|\beta))$ by $\sym{n}$ through $\t_0$ as follows: $\sigma \cdot \T = \T \circ (\t_0)^{-1} \circ \sigma^{-1} \circ \t_0$ for all $\sigma \in \sym{n}$ and $\T \in \TT(\lambda,(\alpha|\beta))$. Thus, if $(i,j) \in [\lambda]$ is labelled $a$ by $\t_0$, while the node labelled $\sigma^{-1}(a)$ by $\t_0$ is coloured $\bb{c}_k$ (respectively, $\bb{d}_k$) by $\T$, then $\sigma \cdot \T$ colours $(i,j)$ with $\bb{c}_k$ (respectively, $\bb{d}_k$). Observe that under this action, the stabiliser of $\T_0$ is $\sym{\alpha|\beta}$. Consequently, $\TT(\lambda, (\alpha|\beta))$ is in a bijective correspondence with the set of left cosets of $\sym{\alpha|\beta}$ in~$\sym{n}$ via $d \cdot \T_0 \leftrightarrow d \sym{\alpha|\beta}$. We set \begin{gather*}\T_d=d\cdot \T_0\end{gather*} so that $\T_d=\T_{d'}$ if and only if $d^{-1}d'\in\sym{\alpha|\beta}$.

We order the colours $\bb{c}_1,\bb{c}_2,\dotsc,\bb{d}_1,\bb{d}_2,\dotsc$ as follows
\begin{gather*}\bb{c}_1<\bb{c}_2<\cdots<\bb{d}_1<\bb{d}_2<\cdots.\end{gather*} A $\lambda$-tableau $\T$ of type $(\alpha|\beta)$ is {\em row semistandard} (respectively, {\em column semistandard}) if every row (respectively, column) of $\T$ is weakly increasing. More precisely, $\T$ is row semistandard if $\T(i,j) \leq \T(i, j')$ for all $(i,j), (i,j') \in [\lambda]$ with $j < j'$, and $\T$ is column semistandard if $\T(i,j) \leq \T(i', j)$ for all $(i,j), (i',j) \in [\lambda]$ with $i < i'$. Following \cite[Section~1.2]{Se} (see also \cite[Section~4]{DR}), we say that
a $\lambda$-tableau $\T$ of type $(\alpha|\beta)$ is {\em semistandard} if and only if
\begin{enumerate}\itemsep=0pt
\item [(i)] $\T$ is both row and column semistandard;
\item [(ii)] $\T(i,j) = \T(i,j')$ for some $(i,j),(i,j') \in [\lambda]$ with $j\ne j'$ only if $\T(i,j) = \bb{c}_k$ for some $k$;
\item [(iii)] $\T(i,j) = \T(i',j)$ for some $(i,j),(i',j) \in [\lambda]$ with $i\ne i'$ only if $\T(i,j) = \bb{d}_k$ for some $k$.
\end{enumerate}
We denote the set of semistandard $\lambda$-tableaux of type $(\alpha|\beta)$ by $\TTS(\lambda, (\alpha|\beta))$.

%\begin{gather*}
%\begin{matrix}
%\T_1 \\[4pt]
%\begin{Young} $\bb{c}_1$ & $\bb{c}_1$ & \cr $\bb{d}_1$ & $\bb{d}_2$ & \cr $\bb{d}_1$ & \cr \end{Young}
%\end{matrix}
%\end{gather*}

\begin{Example} \label{E:ss} \hfill
\begin{enumerate}\itemsep=0pt
\item[(i)]
Let $\lambda = (2,2,1)$ and $(\alpha|\beta) = ((2)|(2,1))$. Consider the following four $\lambda$-tableaux of type $(\alpha|\beta)$:
\begin{gather*}
\begin{matrix}
\T_1 \\[4pt]
\young(<\:\bb{c}_1\:><\:\bb{c}_1\:>,<\:\bb{d}_1\:><\:\bb{d}_2\:>,<\:\bb{d}_1\:>)
\end{matrix} \qquad
\begin{matrix}
\T_2 \\[4pt]
\young(<\:\bb{c}_1\:><\:\bb{d}_1\:>,<\:\bb{c}_1\:><\:\bb{d}_2\:>,<\:\bb{d}_1\:>)
\end{matrix} \qquad
\begin{matrix}
\T_3 \\[4pt]
\young(<\:\bb{c}_1\:><\:\bb{d}_1\:>,<\:\bb{c}_1\:><\:\bb{d}_1\:>,<\:\bb{d}_2\:>)
\end{matrix} \qquad
\begin{matrix}
\T_4 \\[4pt]
\young(<\:\bb{c}_1\:><\:\bb{c}_1\:>,<\:\bb{d}_1\:><\:\bb{d}_1\:>,<\:\bb{d}_2\:>)
\end{matrix}
\end{gather*}
Only $\T_1$ is semistandard.

\item[(ii)]
Let $p\geq 2$, $\lambda = \big(2,1^{p+2}\big)$ and $(\alpha|\beta) = \big(\varnothing|\big(p,2^2\big)\big)$. Consider the following $\lambda$-tableaux of type $(\alpha|\beta)$:
\begin{gather*}
\begin{matrix}
\T_1 \\[4pt]
\gyoung(<\:\bb{d}_1\:><\:\bb{d}_1\:>,|2\vdts,<\:\bb{d}_1\:>,<\:\bb{d}_2\:>,<\:\bb{d}_2\:>,<\:\bb{d}_3\:>,<\:\bb{d}_3\:>)
\end{matrix} \qquad
\begin{matrix}
\T_2 \\[4pt]
\gyoung(<\:\bb{d}_1\:><\:\bb{d}_2\:>,|3\vdts,<\:\bb{d}_1\:>,<\:\bb{d}_2\:>,<\:\bb{d}_3\:>,<\:\bb{d}_3\:>) \end{matrix} \qquad
\begin{matrix}
\T_3 \\[4pt]
\gyoung(<\:\bb{d}_1\:><\:\bb{d}_3\:>,|3\vdts,<\:\bb{d}_1\:>,<\:\bb{d}_2\:>,<\:\bb{d}_2\:>,<\:\bb{d}_3\:>) \end{matrix}
\end{gather*}
Then $\T_1$ is not semistandard but $\T_2$ and $\T_3$ are. In fact, $\T_2$ and $\T_3$ are the only semistandard $\lambda$-tableaux of type $(\alpha|\beta)$, i.e., $\TTS(\lambda,(\alpha|\beta))=\{\T_2,\T_3\}$.
\end{enumerate}
\end{Example}

It is easy to see that the above discussion generalises the notion of $\lambda$-tableaux of type $\mu$ in the classical case, in the sense that a (semistandard) $\lambda$-tableau of type~$\mu$ is a~(semistandard) $\lambda$-tableau of type $(\mu|\varnothing)$. We denote by $\TT(\lambda,\mu)=\TT(\lambda,(\mu|\varnothing))$ the set of $\lambda$-tableaux of type~$\mu$ and by $\TTS(\lambda,\mu)=\TTS(\lambda,(\mu|\varnothing))$ the set of semistandard $\lambda$-tableaux of type~$\mu$. By convention, $|\TT(\varnothing,(\varnothing|\varnothing))|=1$.

\subsection{One-dimensional representations of symmetric groups}

For the remainder of this section, let $\O$ be either $\F$ or $\Z$. The signature representation of the symmetric group $\sym{n}$ over $\O$ is denoted by $\sgn \colon \sym{n} \to \{ \pm 1 \} \subseteq \O$. We shall abuse notation and also write $\sgn$ for the $\O\sym{n}$-module associated to it. In this latter context, $\sgn$ is $\O$-free of rank~$1$, with basis $\{\epsilon\}$. We denote the trivial $\O\sym{n}$-module that is $\O$-free of rank~$1$ as $\O$, with basis~$\{\bb{1}\}$. Thus, $\sigma \cdot \bb{1} = \bb{1}$ and $\sigma \cdot \epsilon = \sgn(\sigma) \epsilon$ for all $\sigma \in \sym{n}$.

When $H$ is a subgroup of $\sym{n}$, we write the respective restricted $\O H$-modules as $\O_{H}$ and $\sgn_H$. Sometimes, we shall abuse notation and write them as $\O$ and $\sgn$ when there is no confusion.

\subsection{Young permutation modules and Specht modules}

Let $\mu$ be a composition of $n$. The Young permutation module $M^{\mu}_{\O}$ is the permutation module associated to the left regular action of $\sym{n}$ on the left cosets of $\sym{\mu}$ in $\sym{n}$. In other words, $M^{\mu}_{\O} = \bigoplus_{d \sym{\mu} \in \sym{n}/\sym{\mu}} \O (d\sym{\mu})$, and $\sigma \cdot (d \sym{\mu}) = (\sigma d )\sym{\mu}$ for all $\sigma \in \sym{n}$ and $d\sym{\mu} \in \sym{n}/\sym{\mu}$.

When $\tilde{\mu}$ is a composition of $n$ obtained by rearranging some parts of $\mu$, the Young sub\-groups~$\sym{\mu}$ and~$\sym{\tilde{\mu}}$ are conjugate in $\sym{n}$, so that $M^{\mu}_{\O} \cong M^{\tilde{\mu}}_{\O}$ as $\O\sym{n}$-modules.

Let $\lambda$ be a partition. Given a $\lambda$-tableau $\t$, let $d_{\t} = \t \circ \big(\t^{\lambda}\big)^{-1}$. Then $d_{\t} \in \sym{n}$ and $d_{\t} \cdot \t^{\lambda} = d_{\t} \circ \t^{\lambda} = \t$. We define the polytabloid \begin{gather*}e_{\t}:=\sum_{\sigma \in C_{\t}} \sgn(\sigma) (\sigma \cdot (d_{\t} \sym{\lambda})) \in M^{\lambda}_{\O}.\end{gather*} The Specht module $S^{\lambda}_{\O}$ is defined to be the $\O$-submodule of $M^{\lambda}_{\O}$ spanned by $ \{ e_{\t} \colon \t \in \TT(\lambda)\}$. It is not difficult to see that $\tau \cdot e_{\t} = e_{\tau \cdot \t}$ for $\tau \in \sym{n}$ and $\t \in \TT(\lambda)$, so that $S^{\lambda}_{\O}$ is an $\O \sym{n}$-submodule of $M^{\lambda}_{\O}$.

Let $\O=\F$ and $M^*$ denote the contragradient dual of an $\F \sym{n}$-module $M$. The following isomorphism is well known (see \cite[Theorem~8.15]{GJ}):
\begin{gather*}S^\lambda_\F\otimes \sgn\cong \big(S^{\lambda'}_\F\big)^*.\end{gather*}
Furthermore, $\F\sym{n}$ is semisimple as an algebra if and only if either $\Char(\F)=0$ or $\Char(\F)>n$, in which case, the Specht modules $S^{\lambda}_\F$, as $\lambda$ runs over all the partitions of $n$, give a complete list of pairwise non-isomorphic irreducible $\F\sym{n}$-modules.

\subsection{Signed permutation modules} \label{S:signedpermutation}

Let $(\alpha|\beta)$ be a bicomposition of $n$. We define the signed permutation module $M_{\O}(\alpha|\beta)$ to be
\begin{gather*}
M_\O(\alpha|\beta) = \Ind_{\sym{\alpha} \times \sym{\beta}}^{\sym{n}} (\O \boxtimes \sgn),
\end{gather*}
where $\sym{\alpha} \times \sym{\beta}$ is identified with the Young subgroup $\sym{\alpha|\beta}$ of $\sym{n}$. Thus, when $\Gamma$ is a left transversal of $\sym{\alpha|\beta}$ in $\sym{n}$, $M_\O(\alpha|\beta)$ has a basis $\{ d \otimes \bb{1} \otimes \epsilon \colon d \in \Gamma \}$, and $\sigma \cdot (d \otimes \bb{1} \otimes \epsilon) = \sgn(\xi_{\beta}) (d' \otimes \bb{1} \otimes \epsilon)$ if $\sigma d = d' \xi_{\alpha} \tlxi{\beta}{|\alpha|}$ where $d' \in \Gamma$, $(\xi_{\alpha}, \xi_{\beta}) \in \sym{\alpha} \times \sym{\beta}$.

Observe that if $\tilde{\alpha}$ and $\tilde{\beta}$ are compositions obtained by rearranging some parts of $\alpha$ and $\beta$ respectively, then $M_\O(\tilde{\alpha}|\tilde{\beta}) \cong M_\O(\alpha|\beta)$. Also,
\begin{gather*}
M_\O(\alpha|\beta)\otimes \sgn \cong \Ind_{\sym{\alpha}\times \sym{\beta}}^{\sym{n}}((\O\otimes\sgn)\boxtimes (\sgn\otimes \sgn))\cong M_\O(\beta|\alpha).
\end{gather*}

The Young permutation module $M^{\mu}_\O$, where $\mu$ is a composition of $n$, in the previous subsection is isomorphic to $\Ind_{\sym{\mu}}^{\sym{n}} \O$. Thus, $M^{\mu}_\O \cong M_\O(\mu|\varnothing)$. As such, signed permutation modules generalise Young permutation modules.

\begin{Theorem}[signed Young's rule]\label{T:Specht factors of sgn permutation}
Let $(\alpha|\beta)$ be a bicomposition of $n$. The signed permutation module $M_\F(\alpha|\beta)$ has a Specht filtration in which, for a partition $\lambda$ of $n$, the multiplicity of $S_\F^\lambda$ as a factor of the filtration equals $|\TTS(\lambda, (\alpha|\beta))|$.

Dually, the signed permutation module $M_\F(\alpha|\beta)$ has a dual Specht filtration in which, for a~partition $\lambda$ of $n$, the multiplicity of $S_{\lambda,\F}\cong \big(S^\lambda_\F\big)^*$ as a factor of the filtration equals \linebreak $|\TTS(\lambda, (\alpha|\beta))|$.
\end{Theorem}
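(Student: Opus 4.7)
My strategy is induction on $\ell(\beta)$. When $\ell(\beta)=0$, we have $M_\F(\alpha|\varnothing)\cong M^\alpha_\F$ and the assertion is precisely the classical Young's rule in \cite{GJ}, noting that $\TTS(\lambda,\alpha)=\TTS(\lambda,(\alpha|\varnothing))$. For the inductive step, set $s=\ell(\beta)$, $b=\beta_s$, $\hat\beta=(\beta_1,\dotsc,\beta_{s-1})$, and $m=n-b$. The Young subgroup $\sym{\alpha|\beta}$ sits inside $\sym{m}\times\sym{b}$ as $\sym{\alpha|\hat\beta}$ together with a conjugate of $\sym{b}$, so transitivity of induction gives
\begin{gather*}
M_\F(\alpha|\beta)\;\cong\;\Ind_{\sym{m}\times\sym{b}}^{\sym{n}}\!\big(M_\F(\alpha|\hat\beta)\boxtimes\sgn_{\sym{b}}\big).
\end{gather*}
By the inductive hypothesis, $M_\F(\alpha|\hat\beta)$ admits a Specht filtration with $S^\mu_\F$ appearing $|\TTS(\mu,(\alpha|\hat\beta))|$ times. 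Since both $(-)\boxtimes\sgn_{\sym{b}}$ and induction are exact, this lifts to a filtration of $M_\F(\alpha|\beta)$ whose successive quotients are $\Ind_{\sym{m}\times\sym{b}}^{\sym{n}}\!\big(S^\mu_\F\boxtimes\sgn_{\sym{b}}\big)$ with the same multiplicities.

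The heart of the argument is a \emph{signed Pieri rule}: each such induced module itself carries a Specht filtration in which $S^\lambda_\F$ occurs with multiplicity $1$ when $\lambda/\mu$ is a vertical strip of size $b$, and with multiplicity $0$ otherwise. One establishes this over $\Z$, so that the filtration survives reduction modulo $p$, and then computes the multiplicities after tensoring with $\Q$: using Frobenius reciprocity and the Littlewood--Richardson rule applied to $\sgn_{\sym{b}}\cong S^{(1^b)}_\Q$, the coefficient $c^\lambda_{\mu,(1^b)}$ is precisely $1$ when $\lambda/\mu$ is a vertical strip of size $b$ and $0$ otherwise.

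Combining these two filtrations, $S^\lambda_\F$ appears in $M_\F(\alpha|\beta)$ with total multiplicity $\sum_\mu|\TTS(\mu,(\alpha|\hat\beta))|$, the sum running over $\mu\subseteq\lambda$ with $\lambda/\mu$ a vertical strip of size $b$. This equals $|\TTS(\lambda,(\alpha|\beta))|$ via a direct bijection: in any semistandard $\lambda$-tableau of type $(\alpha|\beta)$ the nodes coloured $\bb{d}_{s}$ form a vertical strip of size $b$ (since $\bb{d}_{s}$ is the largest colour and columns of a semistandard tableau are strictly increasing on $\bb{d}$-colours), and their removal returns a semistandard $\mu$-tableau of type $(\alpha|\hat\beta)$; this process is clearly reversible.

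The dual Specht filtration assertion follows by dualising the whole construction: $M_\F(\alpha|\beta)$ is self-dual (both $\F$ and $\sgn$ are self-dual as one-dimensional modules, and contragredient duality commutes with induction from a subgroup), so reversing the filtration yields one with factors $\big(S^\lambda_\F\big)^*\cong S_{\lambda,\F}$ and identical multiplicities. The main obstacle I expect is the integral signed Pieri rule: over $\Q$ it is immediate from Littlewood--Richardson, but the filtration itself must be constructed over $\Z$ (for example by a James--Peel style skew-Specht analysis, or by adapting James's original Garnir-relation approach for unsigned $M^\mu$) in order to guarantee that a Specht filtration persists after reduction modulo $p$.
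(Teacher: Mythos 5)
Your argument is sound and takes a genuinely different route from the paper. The paper's proof is essentially a citation: it invokes Du and Rui's Proposition~5.2 in~\cite{DR}, where the analogous Specht filtration is constructed for the signed $q$-permutation module of the Iwahori--Hecke algebra of type~$A$ over $\Z[v,v^{-1}]$, and then specialises $v=1=q$ to a field. Your proof instead proceeds by induction on $\ell(\beta)$, starting from the classical (unsigned) Young's rule of James and iterating the exact functor $\Ind_{\sym{m}\times\sym{b}}^{\sym{n}}\big(-\boxtimes\sgn_{\sym{b}}\big)$, reducing the inductive step to the Pieri rule for $\Ind_{\sym{m}\times\sym{b}}^{\sym{n}}\big(S^\mu_\F\boxtimes S^{(1^b)}_\F\big)$ and then to a direct bijection between $\TTS(\lambda,(\alpha|\beta))$ and pairs (vertical $b$-strip $\lambda/\mu$, element of $\TTS(\mu,(\alpha|\hat\beta))$) obtained by stripping the top colour $\bb{d}_s$. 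That bijection is correct: condition~(ii) of semistandardness forbids two $\bb{d}_s$ in a row (so the $\bb{d}_s$-cells form a vertical strip), while column semistandardness and condition~(iii) force them to occupy the bottoms of their columns, so removal yields a semistandard $\mu$-tableau of type $(\alpha|\hat\beta)$. You correctly identify the load-bearing ingredient you do not supply here, namely a characteristic-free Pieri filtration; this is indeed available from the James--Peel Specht series for skew shapes (the paper's own reference~\cite{JP}), which works over~$\Z$, so the gap is one of citation rather than substance. The trade-off: the paper's route is terse but imports Du--Rui's quantum machinery, whereas yours is longer but purely classical and self-contained modulo~\cite{JP}. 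Your self-duality argument for the dual Specht filtration is also correct and cleaner than what one might extract from specialisation.
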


\begin{proof}This is essentially proved by Du and Rui in \cite{DR}; we review their proof here. In the proof of \cite[Proposition 5.2]{DR}, they showed that there is a Specht filtration for the signed $q$-permutation module of the Iwahori--Hecke algebra of type $A$ over the ring $\mathbb{Z}[v,v^{-1}]$ (where $v$ is an indeterminate and $q = v^2$) with the correct multiplicity for each Specht factor. As such, under specialization to any field $\F$, we get a Specht filtration for the signed $q$-permutation module with the correct multiplicity for each Specht factor. Since the symmetric group algebra is an Iwahori--Hecke algebra of type $A$ with $v=1=q$, the result follows.
\end{proof}

\begin{Remark}
In a private communication with the authors, Andrew Mathas constructed explicitly a Specht filtration and a dual Specht filtration of the signed permutation module, with the correct multiplicity for each Specht factor, for the Iwahori--Hecke algebra of type $A$.
\end{Remark}

We record some immediate corollaries to the signed Young's rule.

\begin{Corollary} Let $(\alpha|\beta)$ be a bicomposition of $n$.
\begin{enumerate}\itemsep=0pt
\item [$(i)$] $[\sym{n} \colon \sym{\alpha|\beta}] = \sum_{\lambda} |\TT_{\mathrm{std}}(\lambda)| |\TTS(\lambda,(\alpha|\beta))|$, where the sum runs over all partitions $\lambda$ of $n$.
\item [$(ii)$] For any partition $\lambda$ of $n$, $|\TTS(\lambda, (\alpha|\beta))|=|\TTS(\lambda',(\beta|\alpha))|$.
\item [$(iii)$] Let $\tilde{\alpha}$, $\tilde{\beta}$ be compositions obtained by rearranging some parts of $\alpha$ and $\beta$ respectively. For any partition $\lambda$ of $n$, $|\TTS(\lambda, (\tilde{\alpha}|\tilde{\beta}))|=|\TTS(\lambda, (\alpha|\beta))|$.
\item [$(iv)$] For any partition $\lambda$ of $m+n$, $|\TTS(\lambda, (\alpha \squ (1^m)|\beta))|=|\TTS(\lambda, (\alpha|\beta \squ (1^m)))|$ where $\squ$ denotes the concatenation of compositions.
\end{enumerate}
\end{Corollary}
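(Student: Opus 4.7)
The plan for all four parts is to apply the signed Young's rule (Theorem~\ref{T:Specht factors of sgn permutation}), reducing each identity to a statement about Specht or dual-Specht filtrations of signed permutation modules. Since both sides of each equality are integers independent of $\F$, I would work over a field with $\F\sym{n}$ semisimple (e.g., $\F=\Q$), so that the Specht modules are pairwise non-isomorphic simples and the multiplicity of each Specht factor in a filtration is a genuine composition multiplicity. With this setup, every identity below comes out of equating two such multiplicities.

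Part (i) is the easiest: it is just a dimension count. By definition $\dim_\F M_\F(\alpha|\beta)=[\sym{n}:\sym{\alpha|\beta}]$, while $\dim_\F S_\F^\lambda=|\TT_{\mathrm{std}}(\lambda)|$ in any characteristic. Reading off dimensions in the Specht filtration of $M_\F(\alpha|\beta)$ gives the stated sum. For (ii) the plan is to start instead from the dual Specht filtration of $M_\F(\alpha|\beta)$, whose factor $\bigl(S_\F^\lambda\bigr)^*$ occurs with multiplicity $|\TTS(\lambda,(\alpha|\beta))|$, and tensor the whole filtration with $\sgn$. Invoking the two isomorphisms $M_\F(\alpha|\beta)\otimes\sgn\cong M_\F(\beta|\alpha)$ and $\bigl(S_\F^\lambda\bigr)^*\otimes\sgn\cong S_\F^{\lambda'}$ recorded earlier turns this into a Specht filtration of $M_\F(\beta|\alpha)$ in which $S_\F^{\lambda'}$ appears with multiplicity $|\TTS(\lambda,(\alpha|\beta))|$. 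Applying Theorem~\ref{T:Specht factors of sgn permutation} directly to $M_\F(\beta|\alpha)$ identifies the same multiplicity as $|\TTS(\lambda',(\beta|\alpha))|$, which is the identity desired.

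Parts (iii) and (iv) I would deduce by showing that in each case the two relevant signed permutation modules are actually isomorphic, after which the signed Young's rule forces equality of multiplicities factor by factor. For (iii) this is the observation already in the paper: rearranging parts of $\alpha$ (resp.\ $\beta$) conjugates $\sym{\alpha|\beta}$ inside $\sym{n}$ while preserving the inducing character, so $M_\F(\tilde\alpha|\tilde\beta)\cong M_\F(\alpha|\beta)$. For (iv) the key observation is that $\sym{(1^m)}$ is the trivial group, hence $\sgn$ restricted to it is trivial; consequently the inducing data $(\F\boxtimes\sgn)$ on the Young subgroup for $(\alpha\squ(1^m)|\beta)$ and the Young subgroup for $(\alpha|\beta\squ(1^m))$ give, after conjugating the trivial $\sym{(1^m)}$-factor across, the same character on conjugate subgroups of $\sym{n}$. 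Thus $M_\F(\alpha\squ(1^m)|\beta)\cong M_\F(\alpha|\beta\squ(1^m))$, and Theorem~\ref{T:Specht factors of sgn permutation} closes the argument. The only mild obstacle is keeping the bookkeeping straight in (iv): the two Young subgroups are literally distinct (the $\sym{(1^m)}$-factor sits in different positions), so one should explicitly produce the conjugating permutation and verify that the inducing character transports correctly; once this routine verification is in place, all four parts follow from Theorem~\ref{T:Specht factors of sgn permutation}.
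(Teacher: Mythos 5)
Your proposal is correct and follows essentially the same route as the paper: part (i) by a dimension count, parts (iii)--(iv) by the evident module isomorphisms, and part (ii) by tensoring with $\sgn$ over the semisimple case $\F=\Q$ so that multiplicities are genuine composition multiplicities. The only cosmetic difference is that the paper phrases (ii) directly as an equality of composition multiplicities $[M_{\Q}(\alpha|\beta):S_{\Q}^\lambda]=[M_{\Q}(\beta|\alpha):S_{\Q}^{\lambda'}]$ using $S^\lambda_\Q\otimes\sgn\cong S^{\lambda'}_\Q$, while you route through the dual Specht filtration and $\bigl(S_\F^\lambda\bigr)^*\otimes\sgn\cong S_\F^{\lambda'}$; over $\Q$ these are the same statement.
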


\begin{proof} (i) It is well known that the Specht module $S_\F^{\lambda}$ has dimension $|\TT_{\mathrm{std}}(\lambda)|$ (see, for example, \cite[Theorem~1.1]{Peel}). Taking the dimension of $M_\F(\alpha|\beta)$ and applying Theorem \ref{T:Specht factors of sgn permutation} thus yield part~(i).

(ii) When $\F = \mathbb{Q}$, the Specht modules are the irreducible modules, so that the Specht filtration in Theorem \ref{T:Specht factors of sgn permutation} is in fact a composition series. Thus
\begin{gather*}
|\TTS(\lambda, (\alpha|\beta))|= \big[M_{\mathbb{Q}}(\alpha|\beta):S_{\mathbb{Q}}^\lambda\big] = \big[M_{\mathbb{Q}}(\alpha|\beta)\otimes\sgn:S_{\mathbb{Q}}^\lambda\otimes\sgn\big]\\
\hphantom{|\TTS(\lambda, (\alpha|\beta))|= \big[M_{\mathbb{Q}}(\alpha|\beta):S_{\mathbb{Q}}^\lambda\big]}{} =\big[M_{\mathbb{Q}}(\beta|\alpha):S_\mathbb{Q}^{\lambda'}\big]=|\TTS(\lambda',(\beta|\alpha))|;
\end{gather*} here, $[M:S]$ denotes the composition multiplicity of an irreducible module $S$ in $M$.

(iii, iv) These are proved in a manner similar to but easier than part (ii), using the isomorphisms $M_\Q(\alpha|\beta)\cong M_\Q(\tilde{\alpha}|\tilde{\beta})$ and $M_\Q(\alpha\squ(1^m)|\beta)\cong M_\Q(\alpha|\beta\squ(1^m))$ respectively.
\end{proof}

\section{Homomorphisms}\label{S:sstd hom}

Let $\lambda$ be a partition of $n$ and $\mu$ be a composition of $n$. In \cite[Section~13]{GJ}, James constructed for each $\T \in \TT(\lambda,\mu)$ an $\F\sym{n}$-module homomorphism $\theta_\T \colon M_\F^{\lambda} \to M_\F^{\mu}$, and showed that $\{ \hat{\theta}_{\T} := \theta_{\T}|_{S_\F^{\lambda}}\colon \T \in \TTS(\lambda,\mu) \}$ is a basis for the space $\Hom_{\F\sym{n}}\big(S_\F^{\lambda},M_\F^{\mu}\big)$, unless $\Char(\F)=2$ and $\lambda$ is $2$-singular. In particular, this shows that every homomorphism in $\Hom_{\F\sym{n}}\big(S_\F^{\lambda},M_\F^{\mu}\big)$ is the restriction of a~homomorphism in $\Hom_{\F\sym{n}}\big(M_\F^{\lambda},M_\F^{\mu}\big)$ when $\Char(\F)\neq 2$.

In this section, we shall generalise these homomorphisms to obtain homomorphisms between~Specht modules and signed permutation modules. The next example shows that not every homomor\-phism in $\Hom_{\F\sym{n}}\big(S_\F^{\lambda},M_\F(\alpha|\beta)\big)$ is the restriction of a homomorphism in \linebreak $\Hom_{\F\sym{n}}\big(M_\F^{\lambda},M_\F(\alpha|\beta)\big)$, illustrating the difficulty of such generalisation.

\begin{Example}\label{Eg: James res}
Let $n\in \mathbb{Z}^+$ and let $\Char(\F)=p$ with $0 < p \leq n$. Let $\lambda=(1^n)$ and $(\alpha|\beta)=(\varnothing|(n))$. Then $M_\F^\lambda$ is the regular $\F\sym{n}$-module $\F\sym{n}$, while both $M_\F(\alpha|\beta)=M_\F(\varnothing|(n))$ and $S_\F^\lambda$ are isomorphic to the signature representation $\sgn$ of $\F\sym{n}$. Thus $\Hom_{\F\sym{n}}\big(S_\F^\lambda,M_\F(\alpha|\beta)\big)$ has dimension one.

On the other hand, $\Hom_{\F\sym{n}}\big(M_\F^\lambda,M_\F(\alpha|\beta)\big) \cong \Hom_{\F\sym{n}}(\F\sym{n},\sgn)$ has dimension $1$ with a~basis $\{\theta\colon \F\sym{n} \to \sgn \}$, where $\theta(1_{\sym{n}})=\epsilon$. For any $\sigma\in \sym{n}$, we have
\begin{gather*}
\theta(\sigma)=\sigma \cdot \theta(1_{\sym{n}})=\sgn(\sigma) \epsilon.
\end{gather*}
As a submodule of $M_\F^{\lambda} = \F\sym{n}$, the Specht module $S_\F^{\lambda}$ ($\cong \sgn$) is generated by $\sum\limits_{\sigma \in \sym{n}} \sgn(\sigma) \sigma$. We have
\begin{gather*}
\theta\left (\sum_{\sigma\in\sym{n}}\sgn(\sigma)\sigma\right )=\sum_{\sigma\in\sym{n}}\sgn(\sigma)\theta(\sigma)=\sum_{\sigma\in\sym{n}} \epsilon = (n!) \epsilon = 0,
\end{gather*}
since $p \leq n$. Thus $\theta|_{S_\F^\lambda} = 0$.

This example shows that the map $\Hom_{\F\sym{n}} \big(M_\F^{\lambda}, M_\F(\alpha|\beta)\big) \to \Hom_{\F\sym{n}} \big(S_\F^{\lambda}, M_\F(\alpha|\beta)\big)$ defined by $\theta \mapsto \theta|_{S_\F^{\lambda}}$ is not surjective in general.
\end{Example}

As such, to generalise the homomorphisms constructed by James for signed permutation modules, we should not attempt to generalise $\theta_\T$ and take its restriction to $S_\F^{\lambda}$, but have to generalise $\hat{\theta}_\T$ directly instead. In other words, we need to understand $\hat{\theta}_\T(e_\t)$.

\subsection{James's construction}
Let $\lambda$ be a partition of $n$ and $\mu$ be a composition of $n$. Let $\theta_{\T} \colon M_\F^{\lambda} \to M_\F^{\mu}$ be the $\F\sym{n}$-module homomorphism defined in \cite[Section~13]{GJ}. In this subsection, we study how $\theta_{\T}$ acts on the polytabloids in $M_\F^{\lambda}$.

Fix a $\lambda$-tableau $\t_0$, so that $\sym{n}$ acts on $\TT(\lambda,\mu)$ via $\t_0$, as described in Section~\ref{SS:Tableaux}.
Let $\T_0$ be the canonical $\lambda$-tableau of type $\mu$ associated to $\t_0$; recall that $\stab_{\sym{n}}(\T_0) = \sym{\mu}$.
Then each left coset of $\sym{\mu}$ in $\sym{n}$ corresponds to a $\lambda$-tableau of type $\mu$; let $d_\T\sym{\mu}$ be the left coset corresponding to $\T$.
Recall also the initial $\lambda$-tableau $\t^{\lambda}$, and let $d_{\t_0} = \t_0 \circ (\t^{\lambda})^{-1} \in \sym{n}$.
James defined $\theta_\T \in \Hom_{\F\sym{n}}\big(M_\F^{\lambda}, M_\F^{\mu}\big)$ so that
\begin{gather*}\theta_{\T}(d_{\t_0}\sym{\lambda}) = \sum_{\substack{d\sym{\mu} \in \sym{n}/\sym{\mu}: \\ d\sym{\mu} \subseteq R_{\t_0} d_\T \sym{\mu}}} d \sym{\mu}.\end{gather*}

Let $\t \in \TT(\lambda)$, and let $\rho_{\t} = \t \circ (\t_0)^{-1}$. Then $\rho_{\t} \cdot \t_0 = \rho_{\t} \circ \t_0 = \t$, so that $e_{\t} = e_{\rho_{\t} \cdot \t_0} = \rho_{\t} \cdot e_{\t_0}$. Thus
\begin{align*}\allowdisplaybreaks
\theta_{\T}(e_{\t}) &= \theta_{\T}(\rho_{\t} \cdot e_{\t_0})
= \theta_{\T}\left (\rho_{\t} \cdot \sum_{\sigma \in C_{\t_0}} \sgn(\sigma) (\sigma \cdot (d_{\t_0} \sym{\lambda}))\right ) \\
&= \sum_{\sigma \in C_{\t_0}} \sgn(\sigma) (\rho_{\t} \sigma \cdot \theta_{\T}(d_{\t_0} \sym{\lambda}))
= \sum_{\sigma \in C_{\t_0}} \sgn(\sigma) \left(\rho_{\t}\sigma \cdot \sum_{\substack{d\sym{\mu} \in \sym{n}/\sym{\mu} : \\ d\sym{\mu} \subseteq R_{\t_0} d_\T \sym{\mu}}} d \sym{\mu} \right) \\
&= \sum_{\sigma \in C_{\t_0}} \sgn(\sigma) \sum_{\substack{d\sym{\mu} \in \sym{n}/\sym{\mu} : \\ d\sym{\mu} \subseteq \rho_{\t} \sigma R_{\t_0} d_\T \sym{\mu}}} d \sym{\mu}
= \sum_{d \sym{\mu} \in \sym{n}/\sym{\mu}} a_{\rho_\t^{-1}d,d_\T} \, d\sym{\mu},
\end{align*}where
\begin{gather*}
a_{\rho_\t^{-1}d,d_\T}= \sum_{\substack{\sigma \in C_{\t_0} : \\ \sigma^{-1} \rho_{\t}^{-1} d \in R_{\t_0} d_{\T} \sym{\mu}}} \sgn(\sigma) =
\sum_{\substack{\sigma \in C_{\t_0} : \\ \sigma \rho_{\t}^{-1} d \in R_{\t_0} d_{\T} \sym{\mu}}} \sgn(\sigma).
\end{gather*}

We summarise this below.

\begin{Theorem} \label{T:JamesTheta}
Let $\lambda$ be a partition of $n$ and $\mu$ be a composition of $n$, and
let $\T \in \TT(\lambda,\mu)$.
The $\F\sym{n}$-module homomorphism $\hat{\theta}_{\T} \colon S_\F^{\lambda} \to M_\F^{\mu}$ constructed by James satisfies
\begin{gather*}
\hat{\theta}_{\T}(e_{\t}) = \sum_{d \sym{\mu} \in \sym{n}/\sym{\mu}} a_{\rho_\t^{-1}d,d_\T} \, d\sym{\mu},
\end{gather*} where
\begin{gather*}
a_{\rho_\t^{-1}d,d_\T} = \sum_{\substack{\sigma \in C_{\t_0} : \\ \sigma \rho_{\t}^{-1} d \in R_{\t_0} d_{\T} \sym{\mu}}} \sgn(\sigma).
\end{gather*}
\end{Theorem}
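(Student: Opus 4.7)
The plan is a direct unwinding of James's definition of $\theta_\T$ using $\F\sym{n}$-equivariance; indeed the computation immediately preceding the statement already performs exactly this, so the ``proof'' amounts to verifying that each step of that calculation is legitimate. The three moving parts are: translate the polytabloid $e_\t$ into one based at $\t_0$, push through $\theta_\T$, and then collect the coefficient of each basis vector $d\sym{\mu}$.

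First I would set $\rho_\t = \t \circ \t_0^{-1} \in \sym{n}$, so that $\rho_\t \cdot \t_0 = \t$ and hence $e_\t = \rho_\t \cdot e_{\t_0}$ (by the identity $\tau \cdot e_\s = e_{\tau \cdot \s}$ recorded in the preliminaries on Specht modules). Expanding $e_{\t_0}$ over the column stabiliser $C_{\t_0}$ and applying $\hat{\theta}_\T$, while invoking both $\sym{n}$-equivariance and James's defining formula $\theta_\T(d_{\t_0}\sym{\lambda}) = \sum_{d\sym{\mu} \subseteq R_{\t_0} d_\T \sym{\mu}} d\sym{\mu}$, delivers
\begin{gather*}
\hat{\theta}_\T(e_\t) = \sum_{\sigma \in C_{\t_0}} \sgn(\sigma) \sum_{\substack{d\sym{\mu} \in \sym{n}/\sym{\mu}:\\ d\sym{\mu} \subseteq \rho_\t \sigma R_{\t_0} d_\T \sym{\mu}}} d\sym{\mu}.
\end{gather*}

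Next I would interchange the two sums to read off the coefficient of a fixed $d\sym{\mu}$, using that the condition $d\sym{\mu} \subseteq \rho_\t \sigma R_{\t_0} d_\T \sym{\mu}$ is equivalent to $\sigma^{-1}\rho_\t^{-1} d \in R_{\t_0} d_\T \sym{\mu}$ (two left $\sym{\mu}$-cosets are either equal or disjoint). Reindexing $\sigma \mapsto \sigma^{-1}$ on $C_{\t_0}$, which preserves $\sgn$, then converts the first displayed form of $a_{\rho_\t^{-1}d, d_\T}$ into the second. There is no real obstacle here: the argument is pure bookkeeping, and the only point requiring any care is the passage between the coset-containment and the coset-representative conditions, which is immediate from the partition of $\sym{n}$ into left cosets of $\sym{\mu}$.
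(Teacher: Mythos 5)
Your proposal is correct and follows essentially the same route as the paper's own computation, which appears immediately before the statement of Theorem~\ref{T:JamesTheta}: translate $e_\t$ to $\rho_\t \cdot e_{\t_0}$, push through $\theta_\T$ by equivariance and James's defining formula, interchange the sums via the coset-containment criterion, and reindex $\sigma\mapsto\sigma^{-1}$. No substantive difference.
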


\subsection{Generalization of James's construction} \label{S:construction}
Fix a partition $\lambda$ of $n$ and a bicomposition $(\alpha|\beta)$ of $n$. In this subsection, we generalise James's construction of homomorphisms between Specht modules and Young permutation modules to obtain, for each $\dR$ in a subset $\R$ of $\sym{n}$ to be defined below (see Definition~\ref{D:RC}), a $\Z\sym{n}$-module homomorphism $\hJhom_\dR\colon S^\lambda_\Z \to M_\Z(\alpha|\beta)$ (see Theorem~\ref{T: hom}).

As before, we fix a $\lambda$-tableau $\t_0$, so that $\sym{n}$ acts on $\TT(\lambda, (\alpha|\beta))$ through $\t_0$, and denote the canonical $\lambda$-tableau of type $(\alpha|\beta)$ associated to $\t_0$ by $\T_0$. The following is the main theorem of this section:

\begin{Theorem}\label{T: hom}
	Let $\lambda$ be a partition of $n$, $(\alpha|\beta)$ be a bicomposition of $n$ and $\Gamma$ be a fixed left transversal of $\sym{\alpha|\beta}$ in $\sym{n}$. For each $\dR \in\R$, we have a $\Z\sym{n}$-module homomorphism $\hJhom_{\dR}\colon S_\Z^\lambda \to M_\Z(\alpha|\beta)$ given by
	\begin{gather*}
	\hJhom_{\dR}(e_{\t})=\sum_{d\in\Gamma} \a_{\rho_{\t}^{-1}d,\dR}\,(d \otimes \bb{1} \otimes \epsilon),\end{gather*}
	where \begin{gather*}\a_{\rho_{\t}^{-1}d, \dR} = \sum_{\substack{\sigma \in C_{\t_0} : \\ \sigma \rho_{\t}^{-1}d \in R_{\t_0} \dR \sym{\alpha|\beta} }} \sgn(\sigma)\varepsilon_\dR\big(\sigma \rho_{\t}^{-1}d\big),\end{gather*} and $\varepsilon_\dR\big(\tau \dR \xi_{\alpha}\xi_{\beta}^{+|\alpha|}\big) = \sgn(\xi_{\beta})$ for $\tau \in R_{\t_0}$ and $(\xi_{\alpha}, \xi_{\beta}) \in \sym{\alpha} \times \sym{\beta}$.
\end{Theorem}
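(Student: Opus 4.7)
The plan is to realize $\hJhom_\dR$ as the restriction to $S_\Z^\lambda$ of an $\sym{n}$-equivariant map defined on the larger permutation module $M_\Z^\lambda$, generalizing James's $\theta_\T$ by inserting the weight $\varepsilon_\dR$ into the image of the distinguished coset.

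First I would verify that $\varepsilon_\dR$ is a well-defined function on the double coset $R_{\t_0}\dR\sym{\alpha|\beta}$; I expect this to be exactly the content of the condition $\dR \in \R$ (Definition~\ref{D:RC}), namely that whenever $\tau\in R_{\t_0}$ satisfies $\dR^{-1}\tau\dR \in \sym{\alpha|\beta}$, the $\sym{\beta}$-component of $\dR^{-1}\tau\dR$ has trivial sign. From this I read off two book-keeping identities that I will use repeatedly: $\varepsilon_\dR(\tau g)=\varepsilon_\dR(g)$ for $\tau\in R_{\t_0}$, and $\varepsilon_\dR\big(g\xi_\alpha\xi_\beta^{+|\alpha|}\big)=\sgn(\xi_\beta)\varepsilon_\dR(g)$ for $(\xi_\alpha,\xi_\beta)\in\sym{\alpha}\times\sym{\beta}$. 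Next I form
\begin{gather*}
w_\dR := \sum_{d\in\Gamma\cap R_{\t_0}\dR\sym{\alpha|\beta}} \varepsilon_\dR(d)\,(d\otimes \bb{1}\otimes \epsilon) \in M_\Z(\alpha|\beta),
\end{gather*}
and check that it is $R_{\t_0}$-invariant: given $\tau\in R_{\t_0}$, write $\tau d = d'\xi_\alpha\xi_\beta^{+|\alpha|}$ with $d'\in\Gamma$, so that $\tau\cdot(d\otimes \bb{1}\otimes \epsilon)=\sgn(\xi_\beta)(d'\otimes \bb{1}\otimes \epsilon)$; the two identities above combine to give $\varepsilon_\dR(d)\sgn(\xi_\beta)=\varepsilon_\dR(d')$, so after the bijective change of variable $d\mapsto d'$ on $\Gamma\cap R_{\t_0}\dR\sym{\alpha|\beta}$ the sum $\tau\cdot w_\dR$ re-assembles into $w_\dR$.

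Since the left stabilizer of the basis element $d_{\t_0}\sym{\lambda}$ in $M_\Z^\lambda$ is $d_{\t_0}\sym{\lambda}d_{\t_0}^{-1}=R_{\t_0}$, the invariance of $w_\dR$ guarantees a well-defined $\Z\sym{n}$-module homomorphism $\JHOM_\dR\colon M_\Z^\lambda\to M_\Z(\alpha|\beta)$ sending $\sigma\cdot d_{\t_0}\sym{\lambda}\mapsto \sigma\cdot w_\dR$. Setting $\hJhom_\dR:=\JHOM_\dR|_{S_\Z^\lambda}$ and using the identity $e_\t=\rho_\t\cdot e_{\t_0}=\sum_{\sigma\in C_{\t_0}}\sgn(\sigma)(\rho_\t\sigma\cdot d_{\t_0}\sym{\lambda})$ one gets $\hJhom_\dR(e_\t)=\sum_{\sigma\in C_{\t_0}}\sgn(\sigma)(\rho_\t\sigma\cdot w_\dR)$. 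Expanding the action on the basis of $M_\Z(\alpha|\beta)$, using the multiplicativity of $\varepsilon_\dR$ to absorb the sign factors arising from the reduction of $\rho_\t\sigma\cdot d$ into a $\Gamma$-representative, and regrouping the resulting double sum by the output $d\in\Gamma$ — a book-keeping exercise strictly parallel to the calculation preceding Theorem~\ref{T:JamesTheta} — produces precisely the formula claimed for $\hJhom_\dR(e_\t)$.

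The main obstacle is the $R_{\t_0}$-invariance of $w_\dR$: this is where the hypothesis $\dR\in\R$ does all of the real work, by forcing the signs that appear when acting with $R_{\t_0}$ on the induced-module basis of $M_\Z(\alpha|\beta)$ to cancel correctly against the weights $\varepsilon_\dR(d)$. Once this is in place, the remaining steps are either direct invocations of the definitions or a routine re-indexing that faithfully mirrors James's original calculation.
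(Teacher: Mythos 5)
Your proof is correct and takes a genuinely different, and noticeably more economical, route than the paper's. The paper defines a map $\Jhom_{\dR}$ on the free module $\Z\TT(\lambda)$ (the $\Z\sym{n}$-permutation module on all $\lambda$-tableaux, isomorphic to the regular module), proves that it is $\Z\sym{n}$-equivariant and that its kernel contains all column-sign relations $\pi\cdot\t-\sgn(\pi)\t$ and Garnir relations $G_\Delta^\t\cdot\t$ (Proposition~\ref{P: phi_d hom}, whose part~(ii) requires a fixed-point-free involution argument), and then invokes Lemma~\ref{L: annihilator for specht} --- a $\Z$-analogue of the classical kernel characterization for $\t\mapsto e_\t$ that the authors remark they could not locate in the literature --- to conclude that $\Jhom_{\dR}$ factors through $S_\Z^\lambda$. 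Your approach instead produces an $R_{\t_0}$-invariant element $w_\dR$ of $M_\Z(\alpha|\beta)$ and uses the universal property of the transitive permutation module $M_\Z^\lambda$ (whose basis element $d_{\t_0}\sym{\lambda}$ has stabilizer $R_{\t_0}$) to obtain a $\Z\sym{n}$-homomorphism $M_\Z^\lambda\to M_\Z(\alpha|\beta)$; restricting this to $S_\Z^\lambda\subseteq M_\Z^\lambda$ and expanding $e_\t=\sum_{\sigma\in C_{\t_0}}\sgn(\sigma)(\rho_\t\sigma\cdot d_{\t_0}\sym{\lambda})$ recovers the stated formula for $\a_{\rho_\t^{-1}d,\dR}$ after the same $\sigma\mapsto\sigma^{-1}$ reindexing already used in the derivation of Theorem~\ref{T:JamesTheta}; I have checked the bookkeeping and it matches. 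Your route thus avoids both the Garnir verification and Lemma~\ref{L: annihilator for specht} entirely, at the cost of not establishing those (potentially reusable) facts; it also pinpoints the conceptually pleasant observation that, integrally, the restriction map from $\Hom_{\Z\sym{n}}(M_\Z^\lambda,M_\Z(\alpha|\beta))$ to $\Hom_{\Z\sym{n}}(S_\Z^\lambda,M_\Z(\alpha|\beta))$ does hit this family, even though Example~\ref{Eg: James res} shows the analogous restriction over $\F$ need not be surjective. One small imprecision: you describe well-definedness of $\varepsilon_\dR$ as being \emph{exactly} the content of $\dR\in\R$; in fact $\dR\in\R$ forces the $\sym{\beta}$-component of $\dR^{-1}\tau\dR$ to be \emph{trivial}, not merely of trivial sign, so it is sufficient but strictly stronger than what well-definedness alone requires --- harmless here since the theorem only concerns $\dR\in\R$.
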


By `reducing modulo $\Char(\F)$' the coefficients $\a_{\rho_{\t}^{-1}d, \dR}$ in $\hJhom_{\dR}(e_{\t})$, we obtain an $\F\sym{n}$-module homomorphism $\hJhom_{\dR}^\F\colon S^\lambda_\F\to M_\F(\alpha|\beta)$. Comparing this with $\hat{\theta}_{\T}$ in Theorem~\ref{T:JamesTheta}, one can see that the former is indeed a generalisation of the latter. The appearance of the map $\varepsilon_\dR$ in Theo\-rem~\ref{T: hom} also explains why our maps are indexed by elements of $\sym{n}$ instead of left cosets of $\sym{\alpha|\beta}$ (or equivalently, $\lambda$-tableaux of type $(\alpha|\beta)$), since $\varepsilon_\dR$ depends on $\dR$ and not on $\dR\sym{\alpha|\beta}$. A~little thought should convince the reader that to ensure that $\varepsilon_{\dR}$ is well-defined, $\dR$ may only run over a carefully chosen subset $\R$ of $\sym{n}$.

The remainder of this section is devoted to the proof of Theorem \ref{T: hom}.

\begin{Definition}\label{D:RC}
Define subsets of $\sym{n}$ as follows
\begin{gather*}
\R = \big\{ d \in \sym{n}\colon d^{-1} R_{\t_0}d \cap \sym{\alpha|\beta} \subseteq \sym{\alpha} \big\}, \\
\C = \big\{ d \in \sym{n}\colon d^{-1} C_{\t_0}d \cap \sym{\alpha|\beta} \subseteq \tlsym{\beta}{|\alpha|} \big\}.
\end{gather*}
\end{Definition}

These sets $\R$ and $\C$ of course depend on $\t_0$ and $(\alpha|\beta)$.

\begin{Lemma} \label{L:R}
Let $d \in \sym{n}$.
\begin{enumerate}\itemsep=0pt
\item[$(i)$] The following statements are equivalent:
\begin{enumerate}\itemsep=0pt
\item [$(a)$] $d \in \R$;
\item [$(b)$] $\stab_{R_{\t_0}} (\T_d) \subseteq d\sym{\alpha}d^{-1}$;
\item [$(c)$] whenever $\T_d(i,j) = \T_d(i,j')$ for some $(i,j),(i,j') \in [\lambda]$ with $j \ne j'$, we have $\T_d(i,j) = \bb{c}_k$ for some $k$.
\end{enumerate}
\item[$(ii)$] If $d \in \R$, then $\tau d \xi \in \R$ for all $\tau \in R_{\t_0}$ and $\xi \in \sym{\alpha|\beta}$.
\end{enumerate}
\end{Lemma}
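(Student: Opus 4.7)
My plan is to translate each of the three conditions in part (i) into statements about the coloured tableau $\T_d$, and then do a small direct calculation for part (ii).

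For the equivalence (a) $\iff$ (b), I would first note that by the discussion in Section~\ref{SS:Tableaux} the stabiliser of $\T_0$ under the $\sym{n}$-action on $\TT(\lambda,(\alpha|\beta))$ is $\sym{\alpha|\beta}$, and since $\T_d = d\cdot \T_0$ we get $\stab_{\sym{n}}(\T_d) = d\sym{\alpha|\beta}d^{-1}$. Hence $\stab_{R_{\t_0}}(\T_d) = R_{\t_0}\cap d\sym{\alpha|\beta}d^{-1}$, and conjugating the containment in (a) by $d$ turns it into precisely (b). This step is purely definitional.

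The equivalence (b) $\iff$ (c) is where the real content lies, and is the step I expect to be the main obstacle in the sense that one has to identify the two subgroups appearing in (b) in terms of the coloring of $\T_d$. On the one side, $R_{\t_0}$ is the direct product, over rows, of the symmetric groups on the row-entries of $\t_0$, so an element $\tau \in R_{\t_0}$ fixes $\T_d$ iff in each row of $\t_0$ it only permutes entries whose nodes share a common $\T_d$-colour; in particular this stabiliser is generated by transpositions of same-row, same-colour entries. On the other side, using the explicit formula $\T_d(i,j) = \T_0(\t_0^{-1}(d^{-1}(\t_0(i,j))))$ coming from $\T_d = d\cdot \T_0$, I would show that the set of $\t_0$-labels of the $\bb{d}$-coloured nodes of $\T_d$ equals $d(\{|\alpha|+1,\dotsc,n\})$; since $\sym{\alpha}$ is the subgroup of $\sym{\alpha|\beta}$ fixing $\{|\alpha|+1,\dotsc,n\}$ pointwise, $d\sym{\alpha}d^{-1}$ is the subgroup of $d\sym{\alpha|\beta}d^{-1}$ fixing every $\bb{d}$-coloured entry of $\T_d$ pointwise. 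Containment (b) then becomes the assertion that every transposition in $R_{\t_0}$ which swaps two same-row, same-colour entries must swap $\bb{c}$-coloured entries, which is exactly (c); the reverse implication is witnessed by a single offending transposition of two $\bb{d}_k$-coloured nodes in one row.

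For part (ii), I would just compute: since $\tau\in R_{\t_0}$ normalises $R_{\t_0}$, $(\tau d\xi)^{-1}R_{\t_0}(\tau d\xi) = \xi^{-1}d^{-1}R_{\t_0}d\xi$. Conjugating the defining condition of $\R$ by $\xi$ shows that $\tau d\xi \in \R$ iff $d^{-1}R_{\t_0}d \cap \xi\sym{\alpha|\beta}\xi^{-1} \subseteq \xi\sym{\alpha}\xi^{-1}$. But $\xi\in\sym{\alpha|\beta}$ normalises $\sym{\alpha|\beta}$, and it normalises $\sym{\alpha}$ as well because $\sym{\alpha}$ is a direct factor of $\sym{\alpha|\beta} = \sym{\alpha}\times\tlsym{\beta}{|\alpha|}$. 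The condition therefore collapses back to $d^{-1}R_{\t_0}d \cap \sym{\alpha|\beta} \subseteq \sym{\alpha}$, i.e., $d\in\R$.
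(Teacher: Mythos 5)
Your proposal is correct and follows essentially the same route as the paper: $(a)\Leftrightarrow(b)$ by identifying $\stab_{R_{\t_0}}(\T_d)=R_{\t_0}\cap d\sym{\alpha|\beta}d^{-1}$, $(b)\Leftrightarrow(c)$ by reducing to generating transpositions of the stabiliser, and part (ii) by a one-line conjugation computation. The only cosmetic difference is in the $(b)\Leftrightarrow(c)$ step, where you characterise $d\sym{\alpha}d^{-1}$ as the pointwise stabiliser of $d(\{|\alpha|+1,\dots,n\})$ (and deduce generation by transpositions from the block structure of $R_{\t_0}$ acting on colour-classes), whereas the paper directly unwinds the condition $(\t_0(i,j)\;\t_0(i,j'))\in d\sym{\alpha}d^{-1}$ and appeals to the fact that intersections of conjugates of Young subgroups are generated by their transpositions.
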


\begin{proof} For part (i), firstly,
\begin{gather*}
\stab_{R_{\t_0}} (\T_d) = R_{\t_0} \cap \stab_{\sym{n}} (d \cdot \T_0) = R_{\t_0} \cap d \sym{\alpha|\beta} d^{-1}.
\end{gather*}
This proves the equivalence of (a) and (b).

Next, observe that a transposition $(a \;\; b)$ lies in $R_{\t_0}$ if and only if $a$ and $b$ label nodes in the same row of $\t_0$, i.e., $(\t_0)^{-1}(a) = (i,j)$ and $(\t_0)^{-1}(b) = (i,j')$ for some $i$. Furthermore, $(a \;\; b) \cdot \T_d = \T_d$ if and only if $\T_d((\t_0)^{-1}(a)) = \T_d((\t_0)^{-1}(b))$.
Since $\stab_{R_{\t_0}} (\T_d) = R_{\t_0} \cap d \sym{\alpha|\beta} d^{-1}$ is an intersection of conjugates of Young subgroups, it is generated by the transpositions it contains. Thus, $\stab_{R_{\t_0}} (\T_d)$ is generated by
\begin{gather*}
S = \big\{ (\t_0(i,j) \;\; \t_0(i,j')) \colon(i,j),(i,j') \in [\lambda],\, j \ne j',\, \T_d(i,j) = \T_d(i,j') \big\}.
\end{gather*}

Now,
\begin{gather*}
 (\t_0(i,j) \;\; \t_0(i,j')) \in d\sym{\alpha} d^{-1} \\
\quad {} \Leftrightarrow \big( d^{-1}(\t_0(i,j)) \;\; d^{-1}(\t_0(i,j'))\big) \in \sym{\alpha} \\
\quad {}\Leftrightarrow \exists\, (1 \leq k \leq \ell(\alpha)), \
\sum_{i=1}^{k-1} \alpha_i < d^{-1}(\t_0(i,j)), \ d^{-1}(\t_0(i,j')) \leq \sum_{i=1}^{k} \alpha_i\\
\quad {}\Leftrightarrow \exists\, (1 \leq k \leq \ell(\alpha)), \ \T_0 \big((\t_0)^{-1} \big(d^{-1}(\t_0(i,j))\big)\big) = \T_0 \big((\t_0)^{-1} \big( d^{-1}(\t_0(i,j'))\big)\big) = \bb{c}_k \\
\quad {} \Leftrightarrow \exists\, (1 \leq k \leq \ell(\alpha)), \ \T_d (i,j) = \T_d (i,j') = \bb{c}_k .
\end{gather*}
Hence (b) and (c) are equivalent.

For part (ii), observe that
\begin{gather*}
(\tau d \xi)^{-1} R_{\t_0} (\tau d \xi) \cap \sym{\alpha|\beta} = \xi^{-1}\big(d^{-1}R_{\t_0}d\cap \sym{\alpha|\beta}\big)\xi \subseteq
\xi^{-1}\sym{\alpha}\xi = \sym{\alpha}.\tag*{\qed}
\end{gather*}\renewcommand{\qed}{}
\end{proof}

We have analogous statements and proofs for $\C$ too.

\begin{Lemma} \label{L:C}
Let $d \in \sym{n}$.
\begin{enumerate}\itemsep=0pt
\item[$(i)$] The following statements are equivalent:
\begin{enumerate}\itemsep=0pt
\item [$(a)$] $d \in \C$;
\item [$(b)$] $\stab_{C_{\t_0}} (\T_d) \subseteq d\tlsym{\beta}{|\alpha|}d^{-1}$;
\item [$(c)$] whenever $\T_d(i,j) = \T_d(i',j)$ for some $(i,j),(i',j) \in [\lambda]$ with $i \ne i'$, we have $\T_d(i,j) = \bb{d}_k$ for some $k$.
\end{enumerate}
\item[$(ii)$] If $d \in \C$, then $\tau d \xi \in \C$ for all $\tau \in C_{\t_0}$ and $\xi \in \sym{\alpha|\beta}$.
\end{enumerate}
\end{Lemma}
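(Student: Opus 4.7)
The plan is to mirror the proof of Lemma~\ref{L:R} verbatim, with the roles of row and column stabilisers interchanged ($R_{\t_0} \leftrightarrow C_{\t_0}$), the roles of $\sym{\alpha}$ and $\tlsym{\beta}{|\alpha|}$ swapped, and the colours $\bb{c}_k$ replaced by $\bb{d}_k$ throughout. The key structural fact that makes this mirroring succeed is that $\sym{\alpha|\beta} = \sym{\alpha} \times \tlsym{\beta}{|\alpha|}$ is an internal direct product, so $\tlsym{\beta}{|\alpha|}$ is a normal subgroup of $\sym{\alpha|\beta}$, just as $\sym{\alpha}$ is; this is what allows the conjugation step in part (ii) to go through.

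For part (i), the equivalence of (a) and (b) is immediate from
\[
\stab_{C_{\t_0}} (\T_d) = C_{\t_0} \cap \stab_{\sym{n}} (d \cdot \T_0) = C_{\t_0} \cap d \sym{\alpha|\beta} d^{-1}
\]
combined with the definition of $\C$. For (b)~$\Leftrightarrow$~(c), I first observe that a transposition $(a\;b)$ lies in $C_{\t_0}$ if and only if $a$ and $b$ label nodes in the same column of $\t_0$, and that $\stab_{C_{\t_0}} (\T_d)$, being an intersection of Young subgroups (products of symmetric groups on disjoint subsets), is generated by the transpositions it contains. These generating transpositions are precisely those of the form $(\t_0(i,j)\;\t_0(i',j))$ with $(i,j),(i',j)\in [\lambda]$, $i\neq i'$ and $\T_d(i,j)=\T_d(i',j)$. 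Then, following the chain of equivalences in Lemma~\ref{L:R}(i), membership $(\t_0(i,j)\;\t_0(i',j))\in d\tlsym{\beta}{|\alpha|}d^{-1}$ amounts to requiring that $d^{-1}(\t_0(i,j))$ and $d^{-1}(\t_0(i',j))$ both lie in a common interval $\bigl(|\alpha|+\sum_{b=1}^{k-1}\beta_b,\;|\alpha|+\sum_{b=1}^{k}\beta_b\bigr]$, which by the definition of $\T_0$ translates exactly to $\T_d(i,j)=\T_d(i',j)=\bb{d}_k$ for some $k$.

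For part (ii), the same one-line computation as in Lemma~\ref{L:R}(ii) applies, using normality of $\tlsym{\beta}{|\alpha|}$ in $\sym{\alpha|\beta}$: for $\tau \in C_{\t_0}$ and $\xi \in \sym{\alpha|\beta}$,
\[
(\tau d \xi)^{-1} C_{\t_0} (\tau d \xi) \cap \sym{\alpha|\beta} = \xi^{-1}\bigl(d^{-1} C_{\t_0} d \cap \sym{\alpha|\beta}\bigr)\xi \subseteq \xi^{-1}\tlsym{\beta}{|\alpha|}\xi = \tlsym{\beta}{|\alpha|}.
\]
There is no substantive obstacle, since the argument is a direct transcription of the one for $\R$; the only point requiring mild care is the conjugation invariance of $\tlsym{\beta}{|\alpha|}$ under $\sym{\alpha|\beta}$, which is automatic from the direct product decomposition of the Young subgroup.
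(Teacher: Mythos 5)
Your proof is correct and takes exactly the approach the paper intends: the paper itself says only that ``We have analogous statements and proofs for $\C$ too,'' and your mirroring of Lemma~\ref{L:R} with $R_{\t_0}\leftrightarrow C_{\t_0}$, $\sym{\alpha}\leftrightarrow\tlsym{\beta}{|\alpha|}$, $\bb{c}_k\leftrightarrow\bb{d}_k$ is precisely that analogous proof. Your added remark that the conjugation step in part (ii) relies on $\tlsym{\beta}{|\alpha|}$ being normal in $\sym{\alpha|\beta}$ (coming from the internal direct product $\sym{\alpha|\beta}=\sym{\alpha}\times\tlsym{\beta}{|\alpha|}$) correctly identifies the structural fact that makes the transcription go through.
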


Lemmas \ref{L:R} and \ref{L:C} give the following immediate corollary.

\begin{Corollary}\label{C:sstdRC}
If $d \in \sym{n}$ such that $\T_d \in \TTS(\lambda, (\alpha|\beta))$, then $d \in \R \cap \C$.
\end{Corollary}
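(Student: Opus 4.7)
The plan is to observe that the corollary follows essentially immediately by matching the characterizations of $\R$ and $\C$ in parts~(i)(c) of Lemmas~\ref{L:R} and~\ref{L:C} with conditions~(ii) and~(iii), respectively, in the definition of semistandardness given in Section~\ref{SS:Tableaux}.

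More concretely, suppose $d \in \sym{n}$ satisfies $\T_d \in \TTS(\lambda, (\alpha|\beta))$. First I would invoke condition~(ii) of semistandardness: whenever two distinct nodes $(i,j), (i,j') \in [\lambda]$ lie in the same row and have $\T_d(i,j) = \T_d(i,j')$, this common colour must be some $\bb{c}_k$. This is precisely the condition in Lemma~\ref{L:R}(i)(c), which is equivalent to $d \in \R$.

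Similarly, condition~(iii) of semistandardness says that whenever two distinct nodes $(i,j), (i',j) \in [\lambda]$ lie in the same column with $\T_d(i,j) = \T_d(i',j)$, the common colour must be some $\bb{d}_k$. This is precisely the condition in Lemma~\ref{L:C}(i)(c), which is equivalent to $d \in \C$. Combining the two gives $d \in \R \cap \C$.

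There is no real obstacle here: the two supporting lemmas have already done all the work of translating between the algebraic conditions (intersections of conjugates of Young subgroups lying in $\sym{\alpha}$ or $\tlsym{\beta}{|\alpha|}$) and the combinatorial/tableau-theoretic conditions, and the definition of semistandardness supplies exactly these combinatorial conditions. The corollary is therefore a one-line consequence; I would simply state the two implications and conclude.
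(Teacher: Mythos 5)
Your proposal is correct and is essentially the argument the paper intends: the paper simply remarks that Lemmas~\ref{L:R} and~\ref{L:C} give the corollary immediately, and your explanation spells out exactly that matching between conditions~(ii), (iii) of semistandardness and parts~(i)(c) of those lemmas.
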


In order to generalise the coefficient $a_{d,d_\T}$ in Theorem \ref{T:JamesTheta} to $\a_{d,\dR}$ in Definition \ref{D:Omega&a}, we need the following lemma which also explains the choice of the set $\R$.

\begin{Lemma} \label{L:projection}
Let $\dR \in \R$. There is a well-defined projection map $\pi_\dR\colon R_{\t_0}\dR\sym{\alpha|\beta} \to \sym{\beta}$ defined by $\tau \dR \xi_{\alpha} \tlxi{\beta}{|\alpha|} \mapsto \xi_{\beta}$ for all $\tau \in R_{\t_0}$ and $(\xi_\alpha,\xi_\beta) \in \sym{\alpha} \times \sym{\beta}$.
\end{Lemma}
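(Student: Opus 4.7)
The plan is to check well-definedness of $\pi_\dR$ directly, since everything else is just notation. Suppose
$$\tau_1\dR\xi_{\alpha,1}\tlxi{\beta,1}{|\alpha|} = \tau_2\dR\xi_{\alpha,2}\tlxi{\beta,2}{|\alpha|}$$
with $\tau_i\in R_{\t_0}$ and $(\xi_{\alpha,i},\xi_{\beta,i})\in\sym{\alpha}\times\sym{\beta}$; the task is to show that $\xi_{\beta,1}=\xi_{\beta,2}$, i.e., that the $\tlsym{\beta}{|\alpha|}$-factor is uniquely determined by the element.

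First I would rearrange the hypothesised equality into
$$\dR^{-1}\big(\tau_2^{-1}\tau_1\big)\dR \;=\; \big(\xi_{\alpha,2}\tlxi{\beta,2}{|\alpha|}\big)\big(\xi_{\alpha,1}\tlxi{\beta,1}{|\alpha|}\big)^{-1},$$
and then simplify the right-hand side to $\big(\xi_{\alpha,2}\xi_{\alpha,1}^{-1}\big)\big(\tlxi{\beta,2}{|\alpha|}(\tlxi{\beta,1}{|\alpha|})^{-1}\big)$, using that elements of $\sym{\alpha}$ and $\tlsym{\beta}{|\alpha|}$ commute inside $\sym{n}$ (they act on the disjoint subsets $\{1,\dotsc,|\alpha|\}$ and $\{|\alpha|+1,\dotsc,n\}$). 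The left-hand side lies in $\dR^{-1}R_{\t_0}\dR$ and the right-hand side manifestly lies in $\sym{\alpha|\beta}$, so the common element lies in $\dR^{-1}R_{\t_0}\dR\cap\sym{\alpha|\beta}$, which by the hypothesis $\dR\in\R$ and Definition~\ref{D:RC} is contained in $\sym{\alpha}$.

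Finally, since $\sym{\alpha|\beta}=\sym{\alpha}\tlsym{\beta}{|\alpha|}$ is an internal direct product with $\sym{\alpha}\cap\tlsym{\beta}{|\alpha|}=\{1\}$, the decomposition of any element as an $\sym{\alpha}$-factor times a $\tlsym{\beta}{|\alpha|}$-factor is unique. Having just shown that the right-hand side above lies in $\sym{\alpha}$, its $\tlsym{\beta}{|\alpha|}$-factor $\tlxi{\beta,2}{|\alpha|}(\tlxi{\beta,1}{|\alpha|})^{-1}$ must equal the identity, giving $\xi_{\beta,1}=\xi_{\beta,2}$ as desired.

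There is no serious obstacle — the lemma is essentially a packaging statement whose mathematical content has been absorbed into the definition of $\R$. The only point needing care is to separate the $\sym{\alpha}$- and $\tlsym{\beta}{|\alpha|}$-contributions cleanly via the commutativity step before invoking $\dR\in\R$; this is also what motivates the asymmetric definition of $\R$ (cf.\ $\sym{\alpha}$ rather than the full $\sym{\alpha|\beta}$ on the right-hand side of the inclusion in Definition~\ref{D:RC}).
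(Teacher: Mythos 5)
Your proof is correct and takes essentially the same route as the paper's: rearrange the equality to place $\dR^{-1}(\tau_2^{-1}\tau_1)\dR$ in $\dR^{-1}R_{\t_0}\dR\cap\sym{\alpha|\beta}$, invoke $\dR\in\R$ to place it in $\sym{\alpha}$, and conclude via the trivial intersection $\sym{\alpha}\cap\tlsym{\beta}{|\alpha|}=\{1\}$. The only difference is that you spell out the commutativity/direct-product step that the paper compresses into the phrase ``forcing $\xi_\beta=\xi'_\beta$,'' which is a reasonable amount of extra care.
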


\begin{proof}If $\tau \dR \xi_{\alpha}\tlxi{\beta}{|\alpha|} = \tau' \dR \xi'_{\alpha} \xi'^{+|\alpha|}_{\beta}$, then
\begin{gather*}\dR^{-1} R_{\t_0} \dR \ni \dR^{-1} \big(\tau^{-1} \tau'\big)\dR = \xi_{\alpha}\tlxi{\beta}{|\alpha|} \big(\xi'_{\alpha} \xi'^{+|\alpha|}_{\beta}\big)^{-1} \in \sym{\alpha|\beta},\end{gather*}
so that $\xi_{\alpha}\tlxi{\beta}{|\alpha|} \big(\xi'_{\alpha} \xi'^{+|\alpha|}_{\beta}\big)^{-1} \in \sym{\alpha}$ since $\dR \in \R$, forcing $\xi_{\beta} = \xi'_{\beta}$. The lemma thus follows.
\end{proof}

In view of Lemma \ref{L:projection}, we can make the following definition.

\begin{Definition}\label{D:Omega&a}
Let $d, \dR \in \sym{n}$ with $\dR \in \R$.
\begin{enumerate}\itemsep=0pt
\item [(i)] For $\omega \in R_{\t_0} \dR \sym{\alpha|\beta}$, let $\e_{\dR}(\omega) :=\sgn(\pi_{\dR}(\omega))\in \{\pm1\}$.
\item [(ii)] Let
\begin{gather*}
\Omega_{d,\dR} := \big\{ \sigma \in C_{\t_0}\colon \sigma d \in R_{\t_0} \dR \sym{\alpha|\beta} \big\}, \qquad
\a_{d,\dR} := \sum_{\sigma \in \Omega_{d,\dR} } \sgn(\sigma) \e_{\dR}(\sigma d) \in \Z.
\end{gather*} By convention, if $\Omega_{d,\dR}=\varnothing$ then $\a_{d,\dR}=0$.
\end{enumerate}
\end{Definition}

\begin{Remark} \label{Omega}
We give another description of $\Omega_{d,\dR}$ here. Restrict the left regular action of the symmetric group $\sym{n}$ on $\sym{n}/\sym{\alpha|\beta}$ to the subgroups $R_{\t_0}$ and $C_{\t_0}$, which partition $\sym{n}/\sym{\alpha|\beta}$ into $R_{\t_0}$-orbits and into $C_{\t_0}$-orbits respectively. Then $\sigma \in \Omega_{d,\dR}$ if and only if $\sigma \in C_{\t_0}$ and $\sigma \cdot (d\sym{\alpha|\beta}) \in R_{\t_0} \cdot (\dR\sym{\alpha|\beta})$. As such, $\Omega_{d,\dR} \ne \varnothing$ if and only if $C_{\t_0}d\sym{\alpha|\beta} \cap R_{\t_0} \dR\sym{\alpha|\beta} \ne \varnothing$. Furthermore, if $C_{\t_0}d\sym{\alpha|\beta} \cap R_{\t_0} \dR\sym{\alpha|\beta}$ contains precisely the distinct left cosets $d^{(1)}\sym{\alpha|\beta}, d^{(2)}\sym{\alpha|\beta}, \dotsc, d^{(r)}\sym{\alpha|\beta}$, then $\Omega_{d,\dR} = \bigcup\limits_{i=1}^r \Omega^{(i)}$ (disjoint union), where for each $i$, $\Omega^{(i)} = \{ \sigma \in C_{\t_0}\colon \sigma \cdot d\sym{\alpha|\beta} = d^{(i)}\sym{\alpha|\beta} \}$ and is therefore a left coset of $\stab_{C_{\t_0}} (d\sym{\alpha|\beta}) = C_{\t_0} \cap d\sym{\alpha|\beta} d^{-1}$. In particular, $\Omega_{d,\dR}$ is a union of some left cosets of $C_{\t_0} \cap d\sym{\alpha|\beta} d^{-1}$.
\end{Remark}

We collect together some important properties that $\a_{d,\dR}$ satisfies:

\begin{Lemma} \label{L:a}
Let $d,\dR \in \sym{n}$ with $\dR \in \R$.
\begin{enumerate}\itemsep=0pt
\item [$(i)$] If $\dR' = \tau \dR \xi_{\alpha}\tlxi{\beta}{|\alpha|}$ and $d' = \sigma d \eta_{\alpha}\eta_{\beta}^{+|\alpha|}$ for some $\tau\in R_{\t_0}$, $\sigma\in C_{\t_0}$ and $(\xi_\alpha,\xi_\beta),(\eta_\alpha,\eta_\beta) \in \sym{\alpha} \times \sym{\beta}$, then $\dR' \in \R$ and
 \begin{gather*}
 \a_{d',\dR'} = \sgn(\sigma)\sgn(\xi_\beta)\sgn(\eta_{\beta}) \a_{d,\dR}.
 \end{gather*}

\item [$(ii)$] If $C_{\t_0}d\sym{\alpha|\beta} \cap R_{\t_0} \dR \sym{\alpha|\beta} = \varnothing$ or $d \notin \C$, then $\a_{d,\dR} = 0$.
\item [$(iii)$] Suppose that $d \in \C$ and that $C_{\t_0}d\sym{\alpha|\beta} \cap R_{\t_0} \dR \sym{\alpha|\beta}$ is a disjoint union of $r$ left cosets of~$\sym{\alpha|\beta}$, with representatives $d^{(1)},\dotsc, d^{(r)}$. For each $i$, let $\sigma_i \in C_{\t_0}$ such that $\sigma_i d \in d^{(i)} \sym{\alpha|\beta}$, and let $\e^{(i)} = \sgn(\sigma_i) \e_{\dR}(\sigma_i d)$. Then
 \begin{gather*}
 \a_{d,\dR} = \big|C_{\t_0} \cap d \sym{\alpha|\beta} d^{-1}\big| \sum_{i=1}^r \e^{(i)}.
 \end{gather*}
\end{enumerate}
\end{Lemma}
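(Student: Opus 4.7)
The plan has three parts matching the structure of the lemma. Throughout, I would use the decomposition of $\Omega_{d,\dR}$ given in Remark \ref{Omega} as the main structural tool.

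For part (i), $\dR' \in \R$ is immediate from Lemma \ref{L:R}(ii). The defining sum for $\a_{d',\dR'}$ is indexed by $\sigma' \in C_{\t_0}$ with $\sigma'd' \in R_{\t_0}\dR'\sym{\alpha|\beta}$. Since $\dR' \in R_{\t_0}\dR\sym{\alpha|\beta}$, one has $R_{\t_0}\dR'\sym{\alpha|\beta} = R_{\t_0}\dR\sym{\alpha|\beta}$; substituting $d' = \sigma d \eta_\alpha\eta_\beta^{+|\alpha|}$ and setting $\sigma'' := \sigma'\sigma$, the condition becomes $\sigma''d \in R_{\t_0}\dR\sym{\alpha|\beta}$, so $\sigma' \mapsto \sigma''$ is a bijection $\Omega_{d',\dR'} \to \Omega_{d,\dR}$. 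I would then track signs: write $\sigma''d = \rho_0 \dR \xi_\alpha^{(0)}\xi_\beta^{(0)+|\alpha|}$, then express $\sigma' d'$ in terms of $\dR'$ by substituting $\dR = \tau^{-1}\dR'(\xi_\alpha\xi_\beta^{+|\alpha|})^{-1}$; the $\beta$-component picks up factors $\sgn(\xi_\beta)$ and $\sgn(\eta_\beta)$, while $\sgn(\sigma') = \sgn(\sigma'')\sgn(\sigma)$. Multiplying and summing gives the stated formula.

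For parts (ii) and (iii) I would analyse the two cases uniformly. The empty-intersection case of (ii) is just $\Omega_{d,\dR} = \varnothing$ by Remark \ref{Omega}. For the remaining cases, decompose $\Omega_{d,\dR} = \bigsqcup_{i=1}^r \Omega^{(i)}$ where $\Omega^{(i)} = \sigma_i H$ is a coset of $H := C_{\t_0} \cap d\sym{\alpha|\beta}d^{-1}$. For $\pi \in H$, write $\pi d = d\xi_\alpha(\pi)\xi_\beta(\pi)^{+|\alpha|}$, and for $\sigma_i$ write $\sigma_i d = \rho \dR\eta_\alpha\eta_\beta^{+|\alpha|}$. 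Using that elements of $\sym{\alpha}$ commute with elements of $\tlsym{\beta}{|\alpha|}$ inside $\sym{n}$ and that translation preserves sign, a direct calculation yields
\begin{gather*}
\sgn(\sigma_i\pi)\,\e_{\dR}(\sigma_i\pi d) = \sgn(\sigma_i)\,\e_{\dR}(\sigma_i d)\,\sgn(\xi_\alpha(\pi)),
\end{gather*}
since the two $\sgn(\xi_\beta(\pi))$ contributions (one from $\sgn(\pi)$, one from $\e_{\dR}$) cancel. Thus
\begin{gather*}
\sum_{\sigma \in \Omega^{(i)}} \sgn(\sigma)\e_{\dR}(\sigma d) = \sgn(\sigma_i)\e_{\dR}(\sigma_i d) \sum_{\pi \in H} \sgn(\xi_\alpha(\pi)).
\end{gather*}
The map $\phi\colon H \to \sym{\alpha}$, $\pi \mapsto \xi_\alpha(\pi)$, is a group homomorphism, so $\sgn \circ \phi$ is a character of $H$.

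When $d \in \C$, Lemma \ref{L:C}(i)(b) forces $\xi_\alpha(\pi) = 1$ for every $\pi \in H$, so the inner sum equals $|H|$ and part (iii) follows after summing over $i$. When $d \notin \C$, I would invoke Lemma \ref{L:C}(i)(c) to exhibit nodes $(i,j),(i',j)\in[\lambda]$ with $i \ne i'$ and $\T_d(i,j)=\T_d(i',j)=\bb{c}_k$; the transposition $(\t_0(i,j)\;\;\t_0(i',j))$ lies in $H$ and maps under $\phi$ to a transposition in $\sym{\alpha_k}\subseteq\sym{\alpha}$, so $\sgn\circ\phi$ is non-trivial, the orthogonality of characters gives $\sum_{\pi\in H}\sgn(\xi_\alpha(\pi))=0$, and part (ii) follows.

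The main obstacle is the sign bookkeeping inside the coset $\Omega^{(i)}$: one must verify that the $\sgn(\xi_\beta(\pi))$ contributions coming from $\sgn(\pi)$ and from $\e_{\dR}(\sigma_i\pi d)$ cancel exactly, leaving only $\sgn(\xi_\alpha(\pi))$. Once this identity is established, the character-theoretic dichotomy on $H$ handles both (ii) and (iii) simultaneously, and (i) is a routine reindexing of the defining sum.
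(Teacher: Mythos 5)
Your proof is correct, and while part~(i) is essentially the paper's argument (reindex the sum via $\sigma'\mapsto\sigma'\sigma$ and track signs), your treatment of parts~(ii) and~(iii) genuinely unifies two arguments that the paper keeps separate. You establish the single identity
$\sgn(\sigma_i\pi)\,\e_{\dR}(\sigma_i\pi d) = \sgn(\sigma_i)\,\e_{\dR}(\sigma_i d)\,\sgn(\xi_\alpha(\pi))$
for $\pi$ in $H := C_{\t_0}\cap d\sym{\alpha|\beta}d^{-1}$ (the cancellation of the two $\sgn(\xi_\beta(\pi))$ factors is correct), observe that $\pi\mapsto\sgn(\xi_\alpha(\pi))$ is a $\{\pm1\}$-valued character of $H$, and then both parts become the dichotomy of whether this character is trivial: trivial precisely when $d\in\C$ (giving the $|H|$ factor for (iii)), non-trivial when $d\notin\C$ (forcing the sum over $H$ to vanish for (ii)). The paper instead handles (ii) by a direct pairing argument---it exhibits a transposition $\rho\in C_{\t_0}\cap d\sym{\alpha}d^{-1}$ (this is exactly what $d\notin\C$ provides via Lemma~\ref{L:C}(i)(c)) and shows the terms for $\sigma$ and $\sigma\rho$ cancel, which is just the elementary proof that a non-trivial sign character sums to zero, so your argument subsumes theirs. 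For (iii) the paper also uses the coset decomposition of Remark~\ref{Omega} and shows $\sgn(\omega)\e_{\dR}(\omega d)$ is constant on each coset $\Omega^{(i)}$, but invokes $d\in\C$ immediately so the $\sgn(\xi_\alpha(\pi))$ factor never appears; your version derives the general formula first and then specializes. The two routes are computationally the same, but yours isolates the sign bookkeeping into one identity and treats (ii)--(iii) on equal footing, at the modest cost of introducing the character-theoretic language. One small note: when you write that Lemma~\ref{L:C}(i)(b) forces $\xi_\alpha(\pi)=1$ for $\pi\in H$, that is fine once one identifies $H = \stab_{C_{\t_0}}(\T_d)$, but Definition~\ref{D:RC} gives it directly.
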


\begin{proof} By Lemma \ref{L:R}(ii), $\dR' \in \R$. It is also easy to see that \begin{gather*}\Omega_{d',\dR'} = \Omega_{\sigma d,\dR}= \Omega_{d,\dR}\sigma^{-1}.\end{gather*} Take $\omega \in \Omega_{d',\dR'}$, say $\omega d'=\tau' \dR'\gamma_\alpha\gamma_\beta^{+|\alpha|}$ with $\tau'\in R_{\t_0}$ and $(\gamma_\alpha,\gamma_\beta) \in \sym{\alpha} \times \sym{\beta}$. Then \begin{gather*}\omega \sigma d=\tau' \tau \dR \xi_{\alpha}\gamma_\alpha\eta_{\alpha}^{-1}\big(\xi_\beta\gamma_\beta \eta_\beta^{-1}\big)^{+|\alpha|}\end{gather*} and hence we have $\e_{\dR'}(\omega d')=\sgn(\gamma_\beta)=\e_{\dR}(\omega \sigma d)\sgn(\eta_\beta)\sgn(\xi_\beta)$. Therefore{\samepage
\begin{align*}
\a_{d',\dR'} &= \sum_{\omega \in \Omega_{d',\dR'}} \sgn(\omega) \e_{\dR'}(\omega d')
= \sum_{\omega\sigma \in \Omega_{d,\dR}} \sgn(\sigma)\sgn(\omega\sigma) \e_{\dR}(\omega\sigma d)\sgn(\xi_{\beta})\sgn(\eta_{\beta}) \\
&= \sgn(\sigma)\sgn(\xi_{\beta})\sgn(\eta_{\beta}) \a_{d,\dR}.
\end{align*} This completes the proof of part (i).}

For part (ii), it is clear that $\Omega_{d,\dR} = \varnothing$ if $C_{\t_0}d\sym{\alpha|\beta} \cap R_{\t_0} \dR \sym{\alpha|\beta} = \varnothing$, so that $\a_{d,\dR} = 0$ in this instance. Next we assume that $d \notin \C$. By Lemma \ref{L:C}(i), there exist $(i,j), (i',j) \in [\lambda]$ with $i\ne i'$ such that $\T_d(i,j) = \bb{c}_k = \T_d(i',j)$ for some $k$. Let $\rho = (\t_0(i,j) \;\; \t_0(i',j))$. Then $\rho \in C_{\t_0} \cap d\sym{\alpha} d^{-1}$. If $\sigma \in \Omega_{d,\dR}$, say $\sigma d = \tau \dR \gamma_\alpha \gamma_\beta^{+|\alpha|}$ with $\tau \in R_{\t_0}$ and $(\gamma_\alpha,\gamma_\beta) \in \sym{\alpha} \times \sym{\beta}$, then
\begin{gather*}
R_{\t_0} \dR \sym{\alpha|\beta} \ni \tau \dR \gamma_{\alpha} \big(d^{-1}\rho d\big) \gamma_{\beta}^{+|\alpha|} = \sigma d \big(d^{-1}\rho d\big) = (\sigma \rho) d, \end{gather*}
so that $\sigma \rho \in \Omega_{d,\dR}$, and
\begin{gather*}
\sgn(\sigma \rho) \e_{\dR}(\sigma \rho d) = \sgn(\rho)\sgn(\sigma)\sgn(\gamma_\beta)= - \sgn(\sigma) \e_{\dR}(\sigma d).
\end{gather*}
As such, the contributions to the sum in $\a_{d,\dR}$ by $\sigma$ and $\sigma \rho$ cancel each other out. Consequently, $\a_{d,\dR} = 0$ and the proof of part (ii) is now complete.

For part (iii), for each $i$, let $\Omega^{(i)} = \{ \sigma \in \Omega_{d,\dR} \colon \sigma d \in d^{(i)}\sym{\alpha|\beta} \}$, so that $\Omega_{d,\dR}$ is a disjoint union of the $\Omega^{(i)}$'s (see Remark~\ref{Omega}). Fix $i$. There exist $\tau_i \in R_{\t_0}$ and $\big(\gamma^{(i)}_{\alpha},\gamma_{\beta}^{(i)}\big)\in\sym{\alpha}\times \sym{\beta}$ such that $\sigma_i d = \tau_i\dR \gamma_{\alpha}^{(i)} \tl{\big(\gamma_{\beta}^{(i)}\big)}{|\alpha|}$. For any $\omega \in \Omega^{(i)}$, we have
\begin{gather*} d^{-1} C_{\t_0} d \ni d^{-1} \big(\sigma_i^{-1}\omega\big) d = (\sigma_i d)^{-1} (\omega d) \in \sym{\alpha|\beta},\end{gather*}
so that $d^{-1} \big(\sigma_i^{-1}\omega\big) d \in \tlsym{\beta}{|\alpha|}$ since $d \in \C$. Thus
\begin{gather*}
\omega d = \sigma_i d \big(d^{-1} \sigma_i ^{-1} \omega d\big) = \tau_i \dR \gamma^{(i)}_{\alpha} \big(\gamma^{(i)}_{\beta}\big)^{+|\alpha|}\big( d^{-1} \sigma_i^{-1} \omega d\big),\end{gather*}
so that $\pi_{\dR}(\omega d) = \pi_{\dR}(\sigma_i d) d^{-1} \sigma_i^{-1} \omega d$ and hence
\begin{gather*}
\sgn(\omega) \e_{\dR}(\omega d) = \sgn(\omega) \sgn\big(\pi_{\dR}(\sigma_i d) d^{-1} \sigma_i^{-1} \omega d\big) = \sgn(\sigma_i)\e_{\dR}(\sigma_i d) = \e^{(i)}.\end{gather*}
Consequently,
\begin{align*}\allowdisplaybreaks
\a_{d,\dR} &= \sum_{\omega \in \Omega_{d,\dR}} \sgn(\omega) \e_{\dR}(\omega d)
= \sum_{i=1}^r \sum_{\omega \in \Omega^{(i)}} \sgn(\omega) \e_{\dR}(\omega d)
= \sum_{i=1}^r \sum_{\omega \in \Omega^{(i)}} \e^{(i)} \\
&= \sum_{i=1}^r |\Omega^{(i)}| \e^{(i)}
= \big|C_{\t_0} \cap d \sym{\alpha|\beta}d^{-1}\big| \sum_{i=1}^r \e^{(i)},
\end{align*} where the final equality is given by the following bijection: fix $\sigma^{(i)}\in \Omega^{(i)}$, we have a bijection between the sets $C_{\t_0} \cap d \sym{\alpha|\beta}d^{-1}$ and $\Omega^{(i)}$ given by $\sigma'\mapsto \sigma^{(i)}\sigma'$.
\end{proof}

\begin{Example} \label{E:a}
We continue with Example \ref{E:ss}(ii), where $\lambda = (2,1^{p+2})$, $(\alpha|\beta) = (\varnothing|(p,2^2))$ and~$\T_1$, $\T_2$, $\T_3$ are the $\lambda$-tableaux of type $(\alpha|\beta)$ as given in the example. Let $\t_0$ be the $\lambda$-tableau defined by $\t_0(j,1) = j$ for $1 \leq j \leq p+3$ and $\t_0(1,2) = p+4$. Then $\T_0 = \T_3$. Furthermore, under the action of $\sym{p+4}$ on $\TT(\lambda, (\alpha|\beta))$, there are exactly three $C_{\t_0}$-orbits, with orbit representatives~$\T_1$,~$\T_2$ and~$\T_3$.

Let $d_i$ ($i \in \{1,2,3\}$) be the following permutations in $\sym{p+4}$: $d_1 = (p\ \ p+4\ \ p+2)$, $d_2 = (p+2\ \ p+4)$ and $d_3 = 1_{\sym{p+4}}$. Then $d_i \cdot \T_0 = \T_i$ for all $i$. Using Lemma \ref{L:a}(iii), we get
\begin{alignat*}{4}
& \a_{d_1,d_2} = (p-1)! \cdot 4, \qquad && \a_{d_2,d_2} = p! \cdot 2, \qquad&& \a_{d_3,d_2}= 0,& \\
& \a_{d_1,d_3} = -(p-1)! \cdot 4, \qquad && \a_{d_2,d_3} = 0, \qquad && \a_{d_3,d_3} = p!\cdot 2.&
\end{alignat*}
From Lemma \ref{L:a}(i), we conclude further that $\a_{d,d_2}, \a_{d,d_3} \in \{ \pm ((p-1)! \cdot 4), \pm (p! \cdot 2), 0 \}$ for all $d \in \sym{p+4}$.
\end{Example}

We are now ready to define, for each $\dR\in \R$, a map $\Jhom_{\dR}$ which will eventually lead to $\hJhom_\dR$.

\begin{Definition}\label{D:theta}
Let $\lambda$ be a partition of $n$. Recall that there is a natural left $\sym{n}$-action on the set $\TT(\lambda)$ of $\lambda$-tableaux, and let $\Z\TT(\lambda)$ denote the associated permutation $\Z\sym{n}$-module. Let $(\alpha|\beta)$ be a bicomposition of $n$ and let $\Gamma$ be a fixed left transversal of $\sym{\alpha|\beta}$ in $\sym{n}$. For each $\dR \in\R$, we define a $\Z$-linear map $\Jhom_{\dR}\colon \Z\TT(\lambda) \to M_\Z(\alpha|\beta)$ as follows
\begin{gather*}\Jhom_{\dR}(\t) = \sum_{d \in \Gamma} \a_{\rho_{\t}^{-1} d,\dR} (d \otimes \bb{1} \otimes \epsilon),\end{gather*}
where $\rho_{\t} = \t \circ (\t_0)^{-1} \in \sym{n}$ (so that $\rho_{\t} \cdot \t_0 = \rho_{\t} \circ \t_0 = \t$).
\end{Definition}

The following properties of the map $\Jhom_{\dR}$ follow easily from its definition and Lem\-ma~\ref{L:a}(i).

\begin{Lemma} \label{L:Jhom}
Let $\dR \in \R$.
\begin{enumerate}\itemsep=0pt
 \item [$(i)$] The map $\Jhom_{\dR}$ is independent of the choice of the left transversal $\Gamma$, i.e., if $\Gamma'$ is any left transversal of $\sym{\alpha|\beta}$ in $\sym{n}$, then
 \begin{gather*}
 \Jhom_{\dR}(\t) = \sum_{d' \in \Gamma'} \a_{\rho_{\t}^{-1} d',\dR} (d' \otimes \bb{1} \otimes \epsilon)\end{gather*} for all $\t \in \TT(\lambda)$.
 \item [$(ii)$] If $\dR' \in R_{\t_0} \dR \sym{\alpha|\beta}$, then $\Jhom_{\dR'} = \e_{\dR}(\dR') \Jhom_{\dR}$.
\end{enumerate}
\end{Lemma}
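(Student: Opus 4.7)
My plan is to derive both parts as direct consequences of Lemma~\ref{L:a}(i), which already encodes the essential transformation law of the coefficients $\a_{d,\dR}$ under left multiplication of $d$ by $C_{\t_0}\sym{\alpha|\beta}$ and of $\dR$ by $R_{\t_0}\sym{\alpha|\beta}$. The only additional ingredient is the signed action on $\bb{1}\otimes\epsilon$ that governs the behaviour of the basis elements $d\otimes\bb{1}\otimes\epsilon$ in $M_\Z(\alpha|\beta)$.

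For part~(i), given another left transversal $\Gamma'$ of $\sym{\alpha|\beta}$ in $\sym{n}$, I would write each $d'\in\Gamma'$ uniquely as $d'=d\,\xi_\alpha\tlxi{\beta}{|\alpha|}$ for the unique $d\in\Gamma$ in the same left coset, where $(\xi_\alpha,\xi_\beta)\in\sym{\alpha}\times\sym{\beta}$ depends on $d'$. Then two transformations take place simultaneously: on the one hand the basis element satisfies $d'\otimes\bb{1}\otimes\epsilon=\sgn(\xi_\beta)\,(d\otimes\bb{1}\otimes\epsilon)$ by the description of $M_\Z(\alpha|\beta)$ in Section~\ref{S:signedpermutation}; on the other hand Lemma~\ref{L:a}(i), applied with $\sigma=\tau=1$ and $(\eta_\alpha,\eta_\beta)=(\xi_\alpha,\xi_\beta)$ on the $d$-side (and trivially on the $\dR$-side), yields $\a_{\rho_\t^{-1}d',\dR}=\sgn(\xi_\beta)\,\a_{\rho_\t^{-1}d,\dR}$. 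The two factors of $\sgn(\xi_\beta)$ cancel, so the $d'$-summand in $\Jhom_\dR(\t)$ computed via $\Gamma'$ equals the $d$-summand computed via $\Gamma$, proving that the two expressions for $\Jhom_\dR(\t)$ agree.

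For part~(ii), write $\dR'=\tau\,\dR\,\xi_\alpha\tlxi{\beta}{|\alpha|}$ with $\tau\in R_{\t_0}$ and $(\xi_\alpha,\xi_\beta)\in\sym{\alpha}\times\sym{\beta}$; by definition of $\pi_\dR$ we have $\e_\dR(\dR')=\sgn(\xi_\beta)$. Applying Lemma~\ref{L:a}(i) with $d'=d$ (so $\sigma=1$ and $\eta_\alpha=\eta_\beta=1$) gives $\a_{d,\dR'}=\sgn(\xi_\beta)\,\a_{d,\dR}=\e_\dR(\dR')\,\a_{d,\dR}$ for every $d\in\sym{n}$. Substituting into the defining expression for $\Jhom_{\dR'}(\t)$ and pulling out the scalar $\e_\dR(\dR')$, which is independent of $d$ and $\t$, yields $\Jhom_{\dR'}(\t)=\e_\dR(\dR')\,\Jhom_\dR(\t)$, as required.

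The only step that could cause a hiccup is matching conventions in part~(i): one must carefully track the signed action on $\bb{1}\otimes\epsilon$ and observe that the $\sgn(\xi_\beta)$ coming from the basis element is exactly cancelled by the $\sgn(\xi_\beta)$ coming from Lemma~\ref{L:a}(i). Beyond this bookkeeping, both parts are essentially formal consequences of Lemma~\ref{L:a}(i) together with Definition~\ref{D:Omega&a}(i).
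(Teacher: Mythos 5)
Your proof is correct and takes essentially the same approach as the paper: both parts are read off from Lemma~\ref{L:a}(i), with part~(i) additionally using that $d'\otimes\bb{1}\otimes\epsilon=\sgn(\xi_\beta)(d\otimes\bb{1}\otimes\epsilon)$ so that the two signs cancel, and part~(ii) pulling out the constant $\e_\dR(\dR')$ from the defining sum.
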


\begin{proof} Let $d' = d \eta_{\alpha} \eta_{\beta}^{+|\alpha|}$ with $(\eta_{\alpha},\eta_{\beta}) \in \sym{\alpha} \times \sym{\beta}$. Then $\a_{\rho_{\t}^{-1}d',\dR} = \sgn(\eta_\beta) \a_{\rho_{\t}^{-1}d,\dR}$ by Lem\-ma~\ref{L:a}(i) while
\begin{gather*}d' \otimes \bb{1} \otimes \epsilon = (d \eta_{\alpha} \eta_{\beta}) \otimes \bb{1} \otimes \epsilon = d \otimes (\eta_{\alpha} \cdot \bb{1}) \otimes (\eta_{\beta} \cdot \epsilon) = \sgn(\eta_{\beta}) (d \otimes \bb{1} \otimes \epsilon).\end{gather*}
Thus $\a_{\rho_{\t}^{-1}d',\dR} (d' \otimes \bb{1} \otimes \epsilon) = \a_{\rho_{\t}^{-1}d,\dR} (d \otimes \bb{1} \otimes \epsilon)$ and part (i) follows.

By Lemma \ref{L:a}(i), $\a_{\rho_{\t}^{-1}d,\dR'} = \e_{\dR}(\dR') \a_{\rho_{\t}^{-1}d,\dR}$. Part (ii) thus follows.
\end{proof}

Next, we aim to show that each of the maps $\Jhom_{\dR}\colon \Z\TT(\lambda)\to M_\Z(\alpha|\beta)$ induces a $\Z\sym{n}$-module homomorphism $\hJhom_{\dR}\colon S^\lambda_\Z\to M_\Z(\alpha|\beta)$. For this, we will show that $\Jhom_{\dR}$ is a $\Z\sym{n}$-module homomorphism and that the kernel of the natural map $\psi\colon \Z\TT(\lambda)\to S^\lambda_\Z$ given by $\psi(\t)=e_\t$ is contained in $\ker(\Jhom_{\dR})$ and hence $\Jhom_{\dR}$ induces $\hJhom_{\dR}$ as desired.

For each $j$, let $C_j(\lambda) = \{ (i,j) \in [\lambda] \colon 1 \leq i \leq \ell(\lambda) \}$ be the $j$th column of the Young diagram of $\lambda$. A {\em Garnir transversal} $\Delta$ is a left transversal of $\sym{X}\sym{Y}$ in $\sym{X \cup Y}$, where $\varnothing \ne X \subseteq C_j(\lambda)$ and $\varnothing \ne Y \subseteq C_{j'}(\lambda)$ with $j < j'$, such that $|X|+|Y| > |C_j(\lambda)|$. Given a Garnir transversal $\Delta$ and a $\lambda$-tableau $\t$, let $\Delta_{\t} = \{ \t \circ \gamma \circ \t^{-1} \colon \gamma \in \Delta \}$, so that $\Delta_{\t}$ is a left transversal of $\sym{\t(X)}\sym{\t(Y)}$ in~$\sym{\t(X \cup Y)}$, and write
\begin{gather*}G_{\Delta}^{\t} = \sum_{\gamma \in \Delta_\t} \sgn(\gamma) \gamma.\end{gather*}

\begin{Proposition}\label{P: phi_d hom}
Let $\dR \in \R$. Then
\begin{enumerate}\itemsep=0pt
\item [$(i)$] $\Jhom_{\dR}$ is a $\Z \sym{n}$-module homomorphism, and
\item [$(ii)$] for any $\t \in \TT(\lambda)$, $\pi\in C_\t$ and Garnir transversal $\Delta$, we have that $\ker(\Jhom_{\dR})$ contains both $\pi \cdot \t - \sgn(\pi) \t$ and $G_{\Delta}^{\t} \cdot \t$.
\end{enumerate}
\end{Proposition}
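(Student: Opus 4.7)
For part (i), I plan to verify $\Jhom_{\dR}(\sigma\cdot\t) = \sigma\cdot\Jhom_{\dR}(\t)$ by matching the coefficient of $d'\otimes\bb{1}\otimes\e$ on both sides, for each $d'\in\Gamma$. Since $\rho_{\sigma\cdot\t}=\sigma\rho_\t$, the left-hand coefficient equals $\a_{\rho_\t^{-1}\sigma^{-1}d',\dR}$. On the right, writing $\sigma d = d'\xi_\alpha\xi_\beta^{+|\alpha|}$ with $d'\in\Gamma$ and $(\xi_\alpha,\xi_\beta)\in\sym{\alpha}\times\sym{\beta}$ gives $\sigma\cdot(d\otimes\bb{1}\otimes\e) = \sgn(\xi_\beta)(d'\otimes\bb{1}\otimes\e)$; Lemma~\ref{L:a}(i) applied to $\a_{\rho_\t^{-1}\sigma^{-1}d'\xi_\alpha\xi_\beta^{+|\alpha|},\dR}$ cancels the $\sgn(\xi_\beta)$ factor, matching the two sides. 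For the column-stabilizer half of (ii), each $\pi\in C_\t = \rho_\t C_{\t_0}\rho_\t^{-1}$ is written $\pi = \rho_\t\sigma_0\rho_\t^{-1}$ with $\sigma_0\in C_{\t_0}$ and $\sgn(\pi)=\sgn(\sigma_0)$; then $\rho_{\pi\cdot\t}^{-1}d = \sigma_0^{-1}\rho_\t^{-1}d$, and Lemma~\ref{L:a}(i) yields $\a_{\sigma_0^{-1}\rho_\t^{-1}d,\dR} = \sgn(\pi)\a_{\rho_\t^{-1}d,\dR}$, whence $\Jhom_{\dR}(\pi\cdot\t) = \sgn(\pi)\Jhom_{\dR}(\t)$.

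For the Garnir half of (ii), my plan is a two-step reduction. First, by part (i) and the identity $G_\Delta^\t = \rho_\t G_\Delta^{\t_0}\rho_\t^{-1}$ (which follows from $\Delta_\t = \rho_\t\Delta_{\t_0}\rho_\t^{-1}$), it suffices to prove $G_\Delta^{\t_0}\cdot\Jhom_{\dR}(\t_0) = 0$. Second, applying the column-stabilizer case to $\t_0$ makes $\sym{\t_0(X)}\sym{\t_0(Y)}\leq C_{\t_0}$ act on $\Jhom_{\dR}(\t_0)$ by the sign character, so $a_{\t_0(X)}a_{\t_0(Y)}\cdot\Jhom_{\dR}(\t_0) = |X|!\,|Y|!\cdot\Jhom_{\dR}(\t_0)$, where $a_W := \sum_{\pi\in\sym{W}}\sgn(\pi)\pi$. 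The elementary identity $G_\Delta^{\t_0}\cdot a_{\t_0(X)}a_{\t_0(Y)} = a_Z$ in $\Z\sym{n}$ (with $Z := \t_0(X\cup Y)$; this follows by parametrizing $\sym{Z}$ via $\Delta_{\t_0}\cdot(\sym{\t_0(X)}\sym{\t_0(Y)})$) then gives $|X|!\,|Y|!\cdot G_\Delta^{\t_0}\Jhom_{\dR}(\t_0) = a_Z\cdot\Jhom_{\dR}(\t_0)$, and $\Z$-freeness of $M_\Z(\alpha|\beta)$ reduces the Garnir relation to the single identity $a_Z\cdot\Jhom_{\dR}(\t_0) = 0$.

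The main obstacle is establishing this reduced identity, where the Garnir hypothesis $|X|+|Y| > |C_j(\lambda)| = \lambda'_j$ finally enters. Expanding, the coefficient of each $d\otimes\bb{1}\otimes\e$ ($d\in\Gamma$) is $\sum_{\delta\in\sym{Z}}\sgn(\delta)\a_{\delta^{-1}d,\dR}$. I plan to expand $\a_{\delta^{-1}d,\dR}$ via Definition~\ref{D:Omega&a} and reindex by $g := \sigma\delta^{-1}\in\sym{n}$; since $|\{(\sigma,\delta)\in C_{\t_0}\times\sym{Z} : \sigma\delta^{-1}=g\}| = |\sym{Z}\cap C_{\t_0}| = |X|!\,|Y|!$ when $g\in C_{\t_0}\sym{Z}$ and zero otherwise, the identity reduces to $\sum_{g\in A}\sgn(g)\e_{\dR}(gd) = 0$ for $A := C_{\t_0}\sym{Z}\cap R_{\t_0}\dR\sym{\alpha|\beta}d^{-1}$. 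Writing $g = \tau\dR\eta d^{-1}$ with $\tau\in R_{\t_0}$ and $\eta\in\sym{\alpha|\beta}$, a direct computation shows the weight simplifies to $\sgn(g)\e_{\dR}(gd) = \sgn(\tau)\sgn(\eta_\alpha)\cdot\sgn(\dR)\sgn(d)$ (the $\sgn(\eta_\beta)$ contribution squares out), and this is well-defined modulo the $(\tau,\eta)$-ambiguity because $\dR\in\R$ forces $H := R_{\t_0}\cap\dR\sym{\alpha|\beta}\dR^{-1} \subseteq \dR\sym{\alpha}\dR^{-1}$ via Lemma~\ref{L:R}. The central technical challenge is then a pigeonhole-plus-involution argument on $A$: the constraint $|Z|>\lambda'_j$ forces two elements of $Z$ to share a row of $\t_0$ under the appropriate action, and a carefully chosen transposition (together with the compatibility afforded by $\dR\in\R$) defines an involution on $A$ flipping $\sgn(\tau)\sgn(\eta_\alpha)$. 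Adapting James's classical pigeonhole argument while preserving the signed $\sym{\beta}$-structure is the principal obstacle.
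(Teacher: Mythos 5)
Your treatment of part (i) and of the column-stabilizer half of (ii) is correct and essentially matches the paper's computation. For the Garnir half, your route is genuinely different in organization from the paper's: the paper works directly with $\Jhom_{\dR}(G_\Delta^{\t}\cdot\t)$ and constructs a fixed-point-free sign-reversing involution on the set of \emph{pairs} $\Upsilon=\{(\gamma,\sigma)\colon\gamma\in\Delta_\t,\,\sigma\in\Omega_{(\gamma\rho_\t)^{-1}d,\dR}\}$, where $h(\gamma,\sigma)=(\gamma',\sigma\rho_\t^{-1}\zeta^{-1}\rho_\t)$ with $\gamma((\sigma\rho_\t^{-1})^{-1}\eta(\sigma\rho_\t^{-1}))=\gamma'\zeta$. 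You instead first reduce to $\t=\t_0$ via part (i), convert $G_\Delta^{\t_0}$ to the full antisymmetrizer $a_Z$ using the column-stabilizer relation and $\Z$-freeness, and then collapse the double sum to a single element $g\in A=C_{\t_0}\sym{Z}\cap R_{\t_0}\dR\sym{\alpha|\beta}d^{-1}$. These reductions, the reindexing count $|\sym{Z}\cap C_{\t_0}|=|X|!\,|Y|!$, and the simplification $\sgn(g)\e_\dR(gd)=\sgn(\tau)\sgn(\eta_\alpha)\sgn(\dR)\sgn(d)$ (with well-definedness supplied by $\dR\in\R$) are all correct. What you buy is a cleaner bookkeeping target: a sum over a single set $A$ rather than pairs.

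However, there is a genuine gap at exactly the step you flag as ``the principal obstacle'': you do not construct the involution on $A$, and a \emph{fixed} row transposition $\eta$ (as the paper uses after its pigeonhole) will \emph{not} work here, because left multiplication by a fixed $\eta\in R_{\t_0}\cap\sym{Z}$ preserves $R_{\t_0}\dR\sym{\alpha|\beta}d^{-1}$ but need not preserve $C_{\t_0}\sym{Z}$. The missing idea is that the transposition must depend on $g$: write $g=cz$ with $c\in C_{\t_0}$, $z\in\sym{Z}$, observe that the subsets $c(\t_0(X))\subseteq C_j(\t_0)$ and $c(\t_0(Y))\subseteq C_{j'}(\t_0)$ are well-defined independently of the decomposition (the ambiguity $c\mapsto cc_0$ with $c_0\in C_{\t_0}\cap\sym{Z}=\sym{\t_0(X)}\sym{\t_0(Y)}$ stabilizes both sets), apply the pigeonhole $|X|+|Y|>\lambda'_j$ to their row-projections to obtain a row $i^*$ meeting both, and set $\eta_g:=(\t_0(i^*,j)\;\;\t_0(i^*,j'))\in R_{\t_0}$. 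One then checks $\eta_g g=c(c^{-1}\eta_g c)z\in C_{\t_0}\sym{Z}$ (since $c^{-1}\eta_g c\in\sym{Z}$ precisely because $i^*$ was chosen by pigeonhole on $c(\t_0(X))$, $c(\t_0(Y))$), that the same $i^*$ is selected for $\eta_g g$, so $g\mapsto\eta_g g$ is a fixed-point-free involution on $A$, and that it negates $\sgn(g)\e_\dR(gd)$. Without identifying this $g$-dependence and verifying its consistency across the $C_{\t_0}$-versus-$\sym{Z}$ factorization of $g$, the final cancellation is not established, so the proposal as written is incomplete at its central step.
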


\begin{proof} Let $\t \in \TT(\lambda)$ and $x \in \sym{n}$. We have
\begin{gather*}
\Jhom_{\dR}(x \cdot \t) = \Jhom_{\dR}((x \rho_{\t}) \cdot \t_0) = \sum_{d \in \Gamma} \a_{(x \rho_{\t})^{-1} d, \dR} (d \otimes \bb{1} \otimes \epsilon), \\
x \cdot \Jhom_{\dR} (\t) = x \cdot \left (\sum_{d \in \Gamma} \a_{\rho_{\t}^{-1} d, \dR} (d \otimes \bb{1} \otimes \epsilon)\right ) = \sum_{d \in \Gamma} \a_{\rho_{\t}^{-1} d, \dR} \sgn(\xi_{d,\beta}) (f(d) \otimes \bb{1} \otimes \epsilon),
\end{gather*}
where $xd = f(d) \xi_{d,\alpha} \xi_{d,\beta}^{+|\alpha|}$ with $f(d) \in \Gamma$, $(\xi_{d,\alpha},\xi_{d,\beta}) \in \sym{\alpha} \times \sym{\beta}$. As such, we need to show that for each $d \in \Gamma$,
\begin{gather*}
\a_{(x \rho_{\t})^{-1} f(d), \dR} = \a_{\rho_{\t}^{-1} d, \dR} \sgn(\xi_{d,\beta}).
\end{gather*}
Observe that $\sigma (x\rho_{\t})^{-1}f(d)=\tau \dR \eta_\alpha \eta_\beta^{+|\alpha|}$ for some $\sigma\in C_{\t_0}$, $\tau\in R_{\t_0}$ and $(\eta_\alpha,\eta_\beta)\in\sym{\alpha}\times \sym{\beta}$ if and only if $\sigma\rho_{\t}^{-1}d=\tau \dR (\eta_\alpha \xi_{d,\alpha}) (\eta_\beta\xi_{d,\beta})^{+|\alpha|}$ for some $\sigma\in C_{\t_0}$, $\tau\in R_{\t_0}$ and $(\eta_\alpha,\eta_\beta)\in\sym{\alpha}\times \sym{\beta}$, and in which case, $\e_{\dR}\big( \sigma (\rho_{\t})^{-1}d\big) = \sgn(\xi_{d,\beta}) \e_{\dR}\big(\sigma(x\rho_{\t})^{-1}f(d)\big)$. So $\Omega_{(x\rho_{\t})^{-1}f(d),\dR}=\Omega_{\rho_{\t}^{-1}d,\dR}$, and
\begin{align*}
\a_{(x\rho_{\t})^{-1} f(d),\dR} &= \sum_{\sigma\in \Omega_{(x\rho_{\t})^{-1}f(d),\dR}} \sgn(\sigma)\e_{\dR}\big(\sigma (x\rho_{\t})^{-1} f(d)\big)\\
&=\sum_{\sigma\in \Omega_{\rho_{\t}^{-1}d,\dR}} \sgn(\sigma)\sgn(\xi_{d,\beta})\e_{\dR}\big(\sigma \rho_{\t}^{-1}d\big)
=\a_{\rho_{\t}^{-1}d,\dR} \sgn(\xi_{d,\beta}).
\end{align*} This proves part (i).

Since $\rho_\t\cdot \t_0=\t$, we have $C_\t=\rho_\t C_{\t_0}\rho_\t^{-1}$ and hence $\rho_\t^{-1}\pi^{-1}\rho_\t\in C_{\t_0}$. By Lemma \ref{L:a}(i), we have \begin{gather*}\a_{(\pi\rho_{\t})^{-1} d, \dR}=\a_{\rho_\t^{-1}\pi^{-1}\rho_\t\rho^{-1}_\t d, \dR} = \sgn(\pi) \a_{\rho_{\t}^{-1}d, \dR}.\end{gather*} Therefore
\begin{gather*}
\Jhom_{\dR}(\pi \cdot \t) = \sum_{d \in \Gamma} \a_{(\pi \rho_{\t})^{-1}d, \dR} (d \otimes \bb{1} \otimes \epsilon)
=\sum_{d \in \Gamma} \sgn(\pi) \a_{\rho_{\t}^{-1}d, \dR} (d \otimes \bb{1} \otimes \epsilon)
=\sgn(\pi)\Jhom_{\dR}(\t).
\end{gather*}

Next we turn to $G_{\Delta}^{\t} \cdot \t$. We have
\begin{align*}
\Jhom_{\dR}\big(G_{\Delta}^{\t} \cdot \t\big) & = \Jhom_{\dR}\left (\sum_{\gamma \in \Delta_{\t}} \sgn(\gamma) (\gamma \cdot \t)\right )
= \sum_{\gamma \in \Delta_{\t}} \sum_{d \in \Gamma} \sgn(\gamma) \a_{(\gamma \rho_{\t})^{-1} d, \dR} (d \otimes \bb{1} \otimes \epsilon) \\
& = \sum_{d \in \Gamma} \left( \sum_{\gamma \in \Delta_{\t}} \sum_{\sigma \in \Omega_{(\gamma \rho_{\t})^{-1} d, \dR}} \sgn(\gamma) \sgn(\sigma) \e_{\dR}\big( \sigma (\gamma \rho_{\t})^{-1} d\big) \right) (d \otimes \bb{1} \otimes \epsilon).
\end{align*}
Fix $d \in \Gamma$, and let $b(\gamma,\sigma) = \sgn(\gamma) \sgn(\sigma) \e_{\dR}\big( \sigma (\gamma \rho_{\t})^{-1} d\big)$. We need to show that $\!\sum\limits_{(\gamma,\sigma) \in \Upsilon} \! b(\gamma,\sigma) =0$, where $\Upsilon = \{ (\gamma,\sigma) \colon \gamma \in \Delta_{\t},\, \sigma \in \Omega_{(\gamma \rho_{\t})^{-1} d, \dR} \}$.

Let the Garnir transversal $\Delta$ be a left transversal of $\sym{X}\sym{Y}$ in $\sym{X \cup Y}$, where $X$ and $Y$ are subsets of the $j$th and $j'$th column of $[\lambda]$, with $|X| + |Y| > |C_j(\lambda)|$. Since $|X| + |Y| > |C_j(\lambda)|$, there exists $1 \leq i \leq \ell(\lambda)$ such that $(i,j) \in X$ and $(i,j') \in Y$, and we may choose $i$ to be the least such. Let $\eta$ be the transposition $(\t_0(i,j) \;\; \t_0(i,j'))$. Then $\eta \in R_{\t_0}$.

Let $(\gamma, \sigma) \in \Upsilon$, say $\sigma (\gamma \rho_{\t})^{-1} d = \tau \dR \xi_\alpha \tlxi{\beta}{|\alpha|}$ where $\tau \in R_{\t_0}$ and $(\xi_\alpha,\xi_\beta) \in \sym{\alpha} \times \sym{\beta}$. Then
\begin{align*}
R_{\t_0} \dR \sym{\alpha|\beta} \ni \eta \tau \dR \xi_\alpha \tlxi{\beta}{|\alpha|} &=
\eta \sigma (\gamma \rho_{\t})^{-1} d = \eta \sigma \rho_{\t}^{-1} \gamma^{-1} d\\
&= \big(\sigma \rho_{\t}^{-1}\big) \big(\big(\sigma \rho_{\t}^{-1}\big)^{-1} \eta \big(\sigma \rho^{-1}_{\t} \big)\big) \gamma^{-1} d
= \sigma \rho_{\t}^{-1} \zeta^{-1} \rho_{\t} (\gamma'\rho_{\t})^{-1} d,
\end{align*}
where $\gamma' \in \Delta_{\t}$ and $\zeta \in \sym{\t(X)}\sym{\t(Y)}$ satisfies $\gamma \big(\big(\sigma \rho_{\t}^{-1}\big)^{-1} \eta \big(\sigma \rho^{-1}_{\t} \big)\big) = \gamma' \zeta$. Then $\rho_{\t}^{-1} \zeta^{-1} \rho_{\t} \in \rho_{\t}^{-1} \sym{\t(X)} \sym{\t(Y)} \rho_{\t} = \sym{\t_0(X)}\sym{\t_0(Y)} \subseteq C_{\t_0}$. This shows that $(\gamma',\sigma \rho_{\t}^{-1} \zeta^{-1} \rho_{\t}) \in \Upsilon$. In other words, the function $h \colon \Upsilon \to \Upsilon$ defined by $(\gamma,\sigma) \mapsto \big(\gamma',\sigma \rho_{\t}^{-1} \zeta^{-1} \rho_{\t}\big)$, where $\gamma \big(\big(\sigma \rho_{\t}^{-1}\big)^{-1} \eta \big(\sigma \rho^{-1}_{\t} \big)\big) = \gamma' \zeta$, is well-defined. Furthermore,
\begin{align*}
b(h(\gamma,\sigma)) &= \sgn(\gamma')\sgn\big(\sigma \rho_{\t}^{-1} \zeta^{-1} \rho_{\t} \big) \e_{\dR}\big(\sigma \rho_{\t}^{-1} \zeta^{-1} \rho_{\t} (\gamma'\rho_{\t})^{-1} d\big) \\
&= -\sgn(\gamma) \sgn(\sigma) \sgn(\xi_{\beta}) = -b(\gamma,\sigma).
\end{align*}
We claim that $h$ is a fixed-point-free involution, in which case, since the contributions from $(\gamma,\sigma)$ and $h(\gamma,\sigma)$ towards the sum $\sum\limits_{(\gamma,\sigma) \in \Upsilon} b(\gamma,\sigma)$ cancel each other out, we have $\sum\limits_{(\gamma,\sigma) \in \Upsilon} b(\gamma,\sigma) =0$ as desired.

To prove the claim, first suppose that
\begin{gather*}
(\gamma,\sigma) =h(\gamma,\sigma)=\big(\gamma',\sigma \rho_{\t}^{-1} \zeta^{-1} \rho_{\t}\big).\end{gather*}
Then $\gamma=\gamma'$ and $\zeta=1$ and hence $\eta=1$, a contradiction. So $h$ is fixed-point-free. Next, $h^2(\gamma,\sigma) = h\big(\gamma',\sigma \rho_{\t}^{-1} \zeta^{-1} \rho_{\t}\big) = \big(\gamma'', \sigma \rho_{\t}^{-1} \zeta^{-1} \rho_{\t} \rho_{\t}^{-1} \zeta'^{-1} \rho_{\t}\big)$, where
\begin{gather*}
\gamma''\zeta' = \gamma' \big(\sigma \rho_{\t}^{-1} \zeta^{-1}\big)^{-1} \eta \big(\sigma \rho_{\t}^{-1} \zeta^{-1}\big) =
\gamma' \zeta \big(\sigma \rho_{\t}^{-1}\big)^{-1} \eta \big(\sigma \rho_{\t}^{-1}\big) \zeta^{-1} = \gamma \zeta^{-1}.
\end{gather*}
Thus $\gamma'' = \gamma$ and $\zeta' = \zeta^{-1}$ and hence $h^2(\gamma,\sigma) = (\gamma, \sigma)$, and the proof is complete.
\end{proof}

The next result is well known when the underlying ring is a field (see, for example, \cite[Section~7.4, Corollary, p.~101]{WF}); we are however unable to find its generalisation to~$\mathbb{Z}$ in the existing literature.

\begin{Lemma}\label{L: annihilator for specht}
The map $\psi\colon \Z\TT(\lambda) \to S_\Z^\lambda$ defined by $\t \mapsto e_{\t}$ is a $\Z\sym{n}$-module epimorphism, and $\ker(\psi)$ is generated, as a $\Z$-submodule of $\Z\TT(\lambda)$, by $G \cup H$, where
\begin{gather*}
G= \big\{ G_{\Delta}^{\t} \cdot \t \colon \t \in \TT(\lambda),\ \Delta \text{ a Garnir transversal} \big\}, \\
H= \{ \pi \cdot \t - \sgn(\pi)\t \colon \t \in \TT(\lambda),\ \pi \in C_{\t} \}.
\end{gather*}
\end{Lemma}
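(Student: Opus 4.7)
Surjectivity of $\psi$ is immediate from the definition of $S^\lambda_\Z$ as the $\Z$-span of the polytabloids. Write $N$ for the $\Z$-submodule of $\Z\TT(\lambda)$ generated by $G \cup H$. For the inclusion $N \subseteq \ker\psi$ I would verify both generating families directly: given $\pi \in C_{\t}$, a short computation using $d_{\pi \cdot \t} = \pi d_{\t}$ and $C_{\pi\cdot\t} = \pi C_\t \pi^{-1}$ yields $e_{\pi \cdot \t} = \pi \cdot e_{\t} = \sgn(\pi)\, e_{\t}$, so $\psi(\pi \cdot \t - \sgn(\pi)\t) = 0$; and $\psi(G_\Delta^\t \cdot \t) = G_\Delta^\t \cdot e_\t = 0$ is the classical Garnir relation, whose proof is a characteristic-free sign-cancellation argument comparing left cosets of $\sym{\t(X)}\sym{\t(Y)}$ in $\sym{\t(X \cup Y)}$ (exactly the style of manipulation used in the proof of Proposition~\ref{P: phi_d hom}(ii)), hence remains valid over $\Z$.

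For the reverse inclusion, my plan is to show that the induced surjection $\bar\psi\colon \Z\TT(\lambda)/N \twoheadrightarrow S^\lambda_\Z$ is a $\Z$-module isomorphism, by comparing the two sides to the index set $\TT_{\mathrm{std}}(\lambda)$. On the codomain side, $\{e_\t : \t \in \TT_{\mathrm{std}}(\lambda)\}$ is a $\Z$-basis of $S^\lambda_\Z$: this is the classical standard basis theorem over $\Z$, which follows from $\Q$-linear independence together with the fact that $S^\lambda_\Z$ is $\Z$-free as a submodule of the free module $M^\lambda_\Z$. On the domain side, I would show that $\{\t + N : \t \in \TT_{\mathrm{std}}(\lambda)\}$ spans $\Z\TT(\lambda)/N$ over $\Z$ via the Garnir straightening algorithm: given $\t \in \TT(\lambda)$, apply $H$-relations to replace $\t$, modulo $N$ and up to a sign, by the tableau $\t'$ obtained by sorting each column of $\t$ into increasing order; if $\t'$ is not standard, pick adjacent columns $j < j'$ and a row $i$ with $\t'(i,j) > \t'(i,j')$ and apply the Garnir relation for $X = \{(k,j) : k \geq i\}$, $Y = \{(k,j') : k \leq i\}$, which satisfies $|X|+|Y| > |C_j(\lambda)|$ and expresses $\t'$ modulo $N$ as a $\pm 1$-integer combination of $\lambda$-tableaux that are strictly smaller in a well-founded order on column-reading words. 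Iterating produces a $\Z$-expression in standard tableaux modulo $N$, and then $\bar\psi$ sends the $\Z$-spanning set $\{\t + N : \t \in \TT_{\mathrm{std}}(\lambda)\}$ bijectively onto the $\Z$-basis $\{e_\t : \t \in \TT_{\mathrm{std}}(\lambda)\}$, forcing $\bar\psi$ to be injective.

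The main obstacle I anticipate is verifying that the straightening algorithm, which is standardly stated over a field, in fact runs over $\Z$ without introducing denominators and terminates. The absence of denominators is automatic, since each column-sorting step contributes only a sign and each Garnir expansion uses coefficients $\pm 1$; termination follows from the well-foundedness of the chosen order on column-reading words, which is strictly decreased by each Garnir step. Once these routine but unstated-in-the-literature checks are carried out, the identification $\ker\psi = N$ follows from $\bar\psi$ being a bijection on matched $\Z$-bases/spanning sets.
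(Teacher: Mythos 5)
Your proposal is correct and takes essentially the same route as the paper: both arguments reduce the reverse inclusion $\ker\psi \subseteq N$ to the two facts that (a) $\{e_\t : \t \in \TT_{\mathrm{std}}(\lambda)\}$ is a $\Z$-basis of $S^\lambda_\Z$ and (b) the Garnir straightening algorithm shows $\{\t + N : \t \in \TT_{\mathrm{std}}(\lambda)\}$ spans $\Z\TT(\lambda)/N$, and then close with a standard linear-algebra deduction (you phrase it as "a surjection carrying a spanning set onto a basis is injective," the paper as a direct-sum decomposition of $\Z\TT(\lambda)$ together with a coset computation -- these are the same observation). The paper simply cites Peel and James--Kerber for (a) and (b), whereas you sketch why they hold over $\Z$; otherwise the proofs coincide.
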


\begin{proof}That $\psi$ is a $\Z\sym{n}$-module epimorphism is clear, so we only need to justify the assertion about its kernel. Let $K = \Z(G\cup H)$. It is straightforward to verify that both $G$ and $H$ are invariant under the action of $\sym{n}$, and that $\psi(G \cup H) = \{0\}$ \cite[7.2.1 and Theorem~7.2.3]{GJAK}, so that~$K$ is a $\Z\sym{n}$-submodule of $\Z\TT(\lambda)$, and $K \subseteq \ker(\psi)$. It remains to show that $\ker(\psi) \subseteq K$.

Since $\psi(\Z\TT(\lambda)) = S^{\lambda}_\Z$, and $S_\Z^{\lambda}$ is $\Z$-free with basis $\{ e_{\t} \colon \t \in \TT_{\mathrm{std}}(\lambda) \}$ \cite[Theorem~1.1]{Peel}, we see that $\Z\TT(\lambda) = (\bigoplus_{\t \in \TT_{\mathrm{std}}(\lambda)} \Z\t)\oplus \ker(\psi)$.

Consider $\Z\TT(\lambda)/K$. For each $\t \in \TT(\lambda)$, let $v_{\t} = \t + K \in \Z\TT(\lambda)/K$. Then $\pi\cdot v_\t = \sgn(\pi)v_\t$ for any $\pi \in C_{\t}$, and $G_{\Delta}^{\t} \cdot v_\t = 0$ for any Garnir transversal $\Delta$. Using the same argument as in the proof of \cite[Theorem~7.2.7]{GJAK}, we can show that $\{ v_{\t} \colon \t \in \TT_{\mathrm{std}}(\lambda) \}$ generates $\Z\TT(\lambda)/K$ as a $\Z$-module.

Let $x \in \ker(\psi)$. Then there exists $b_{\t} \in \Z$ for each $\t \in \TT_{\mathrm{std}}(\lambda)$ such that \begin{gather*}x+K = \sum_{\t \in \TT_{\mathrm{std}}(\lambda)} b_{\t}v_{\t} = \sum_{\t \in \TT_{\mathrm{std}}(\lambda)} b_{\t}(\t + K).\end{gather*}
Thus there exists $k \in K$ such that \begin{gather*}x + k = \sum_{\t \in \TT_{\mathrm{std}}(\lambda)} b_{\t}\t \in \left (\bigoplus_{\t \in \TT_{\mathrm{std}}(\lambda)} \Z\t\right ) \cap \ker(\psi) = \{0\}.\end{gather*} Hence $x + k = 0$ and so $x \in K$, and our proof is complete.
\end{proof}

We are now ready to prove Theorem \ref{T: hom}.

\begin{proof}[Proof of Theorem \ref{T: hom}]
By Proposition \ref{P: phi_d hom} and Lemma \ref{L: annihilator for specht}, we have $\ker(\psi) \subseteq \ker(\Jhom_{\dR})$, so that there is a unique $\Z\sym{n}$-module homomorphism $\hJhom_{\dR}\colon S_\Z^\lambda \to M_\Z(\alpha|\beta)$ such that $\hJhom_{\dR} \circ \psi = \Jhom_{\dR}$. The theorem immediately follows.
\end{proof}

Let $\F$ be a field. Since
\begin{gather*}
S_\F^{\lambda} \cong \F \otimes_{\mathbb{Z}} S_\Z^{\lambda} \qquad \text{and} \qquad M_\F(\alpha|\beta) \cong \F \otimes_{\mathbb{Z}} M_\Z(\alpha|\beta),
\end{gather*}
each $\hJhom_{\dR}$ gives rise to an $\F\sym{n}$-homomorphism $\hJhom_{\dR}^{\F} \colon S_\F^{\lambda} \to M_\F(\alpha|\beta)$. More specifically, for any $k \in \mathbb{Z}$, let $k^{\F} =k \cdot 1_{\F} \in \F$. Then
\begin{gather*}\hJhom_{\dR}^{\F} (e_{\t}) = \sum_{d\in\Gamma} \a_{\rho_{\t}^{-1}d,\dR}^{\F} \,(d \otimes \bb{1} \otimes \epsilon).\end{gather*}

As we mentioned in the paragraph following Theorem \ref{T: hom}, our $\hJhom_{\dR}^{\F}$'s generalise the $\hat{\theta}_{\T}$'s constructed by James.

\section{Basis} \label{S:basis?}

Let $\lambda$ be a partition of $n$ and let $(\alpha|\beta)$ be a bicomposition of $n$. As before, fix a $\lambda$-tableau $\t_0$ so that $\sym{n}$ acts on $\TT(\lambda,(\alpha|\beta))$ through $\t_0$. Fix a left transversal $\Gamma$ of $\sym{\alpha|\beta}$ in $\sym{n}$, and write \begin{gather*}\Gamma_{\mathrm{sstd}}=\{ d \in \Gamma \colon \T_d \in \TTS(\lambda, (\alpha|\beta)) \}.\end{gather*} Recall from Corollary \ref{C:sstdRC} that we have $\Gamma_{\mathrm{sstd}}\subseteq \R\cap \C$. It is easy to see that $|\Gamma_{\mathrm{sstd}}|=|\TTS(\lambda,(\alpha|\beta))|$.
In the last section, we constructed for each $\dR \in \R$ an $\F\sym{n}$-homomorphism $\hJhom_{\dR}^{\F} \in \Hom_{\F\sym{n}}\big(S_\F^\lambda,M_\F(\alpha|\beta)\big)$ which generalises James's construction of $\hat{\theta}_{\T} \in \Hom_{\F\sym{n}}\big(S_\F^\lambda,M_\F^{\mu}\big)$ where $\T \in \TT(\lambda,\mu)$.

Let
\begin{alignat*}{3}
 & \hJHOM_{\R}:=\big\{ \hJhom_{\dR} \colon \dR \in \R \big\},\qquad & & \hJHOM^\F_{\R}:=\big\{ \hJhom_{\dR}^\F \colon \dR \in \R \big\}, & \\
 & \hJHOM_{\mathrm{sstd}}:=\big\{ \hJhom_{\dR} \colon \dR \in \Gamma_{\mathrm{sstd}} \big\},\qquad && \hJHOM^\F_{\mathrm{sstd}}:=\big\{ \hJhom_{\dR}^\F \colon \dR \in \Gamma_{\mathrm{sstd}} \big\}.&
\end{alignat*}

In James's classical case, i.e., when $\beta=\varnothing$, the $\hat{\theta}_{\T}$'s, as $\T$ runs over all the semistandard $\lambda$ tableaux of type $\alpha$, form a basis for $\Hom_{\F\sym{n}}\big(S_\F^\lambda,M_\F^\alpha\big)$, unless $\F$ has characteristic $2$ and $\lambda$ is $2$-singular (see \cite[Theorem~13.13]{GJ}).

In this section, we investigate the circumstances under which $\hJHOM_{\mathrm{sstd}}^\F$ is a basis for \linebreak $\Hom_{\F\sym{n}}\big(S_\F^\lambda,M_\F(\alpha|\beta)\big)$.

\subsection{Spanning}

We give two examples in which there is no semistandard $\lambda$-tableau of type $(\alpha|\beta)$, but \linebreak $\Hom_{\F\sym{n}}\big(S_\F^\lambda,M_\F(\alpha|\beta)\big)\ne 0$.

\begin{Example} \label{E:Jameseg}
We have
\begin{align*}
\Hom_{\F\sym{n}}\big(S_{\F}^{\lambda}, M_{\F}(\varnothing|(n))\big)
&= \Hom_{\F\sym{n}}\big(S_{\F}^{\lambda}, \sgn\big)
\cong\Hom_{\F\sym{n}}\big(\sgn, \big(S_{\F}^{\lambda}\big)^*\big) \\
&\cong \Hom_{\F\sym{n}}\big(\sgn, S_{\F}^{\lambda'} \otimes \sgn\big)
\cong \Hom_{\F\sym{n}}\big(\F, S_{\F}^{\lambda'}\big).
\end{align*}
In \cite[Theorem~24.4]{GJ}, a necessary and sufficient condition in terms of the characteristic of $\F$ (as well as the partition $\lambda$) is obtained for $\Hom_{\F\sym{n}}\big(\F, S_{\F}^{\lambda'}\big)$ to be non-zero. As long as $\lambda \ne (1^n)$, there is no semistandard $\lambda$-tableau of type~$(\varnothing|(n))$.
\end{Example}

\begin{Example}\label{Eg: hom char}
Let $\ell$ be an odd prime integer, and let $p:=\Char(\F)$ be either odd or zero.
Let $\lambda = \big(\ell,2,1^{\ell-2}\big)$ and $\alpha = (\ell) = \beta$. Clearly, $\TTS\big(\big(\ell,2,1^{\ell-2}\big),((\ell)|(\ell))\big)=\varnothing$.

Firstly, $M_\F(\alpha|\beta) = \Ind^{\sym{2\ell}}_{\sym{\ell} \times \sym{\ell}} (\F \boxtimes \sgn) \cong \Ind^{\sym{2\ell}}_{\sym{\ell} \times \sym{\ell}} \big(S_\F^{(\ell)} \boxtimes S_\F^{(1^\ell)}\big)$ has a Specht filtration with two factors: $S_\F^{(\ell+1,1^{\ell-1})}$ at the top and $S_\F^{(\ell,1^\ell)}$ at the bottom \cite{JP}, which are simple unless $p = \ell$ \cite[Theorem~2]{P}.
Furthermore, unless $p = \ell$, the partitions $\big(\ell,2,1^{\ell-2}\big)$, $\big(\ell+1,1^{\ell-1}\big)$ and $(\ell,1^\ell)$ have distinct $p$-cores and hence the Specht modules they label lie in distinct blocks of $\sym{2\ell}$ by the Nakayama rule \cite[Corollary~6.1.42]{GJAK}, so that $\Hom_{\F\sym{2\ell}}\big(S_\F^\lambda,M_\F(\alpha|\beta)\big) =0$.

On the other hand, if $p = \ell$, then $S_\F^{(\ell,1^\ell)}$, a submodule of $M_\F(\alpha|\beta)$, is a non-split extension of $D^{(\ell,2,1^{\ell-2})}$ by $D^{(\ell+1,1^{\ell-1})}$. From the known radical structures of Specht modules lying in defect $2$ blocks of symmetric group algebras (see \cite[Theorem~4.4]{R} and \cite[Proposition~6.2]{CT}), we see that~$S_\F^{\lambda}$ has a quotient which is a non-split extension of $D^{(\ell,2,1^{\ell-2})}$ by $D^{(\ell+1,1^{\ell-1})}$ as well. Since all nonzero extensions between simple modules lying in defect~$2$ blocks of symmetric group algebras are one-dimensional \cite[Theorem~I(5)]{S}, we see that $\Hom_{\F\sym{2\ell}}\big(S_\F^\lambda,M_\F(\alpha|\beta)\big) \ne 0$.

Thus, $\Hom_{\F\sym{2\ell}}\big(S_\F^\lambda,M_\F(\alpha|\beta)\big) \ne 0$ if and only if $p = \ell$.
\end{Example}

The two examples above illustrate the difficulty in determining a good sufficient condition in general for $\hJHOM^{\F}_{\mathrm{sstd}}$ to span $\Hom_{\F\sym{n}}\big(S_\F^\lambda,M_\F(\alpha|\beta)\big)$.

\subsection{Linear independence}

While James's $\hat{\theta}_{\T}$ is always nonzero irrespective of the characteristic of the ground field, our $\hJhom_{\dR}$ may be zero in some characteristic in view of Lemma \ref{L:a}(iii). As such, it is certainly possible for $\hJHOM_{\mathrm{sstd}}^{\F}$ to be linearly dependent. In fact, even when all elements of $\hJHOM_{\mathrm{sstd}}^{\F}$ are nonzero, it is still possible for $\hJHOM_{\mathrm{sstd}}^{\F}$ to be linearly dependent:

\begin{Example} \label{E:notli}
We continue with Example \ref{E:a}, where $\lambda = (2,1^{p+2})$ and $(\alpha|\beta) = \big(\varnothing|\big(p,2^2\big)\big)$. Suppose that $p=\Char(\F)$ is an odd prime. Then
\begin{gather*}
-\a^{\F}_{d_1,d_3} = \a^{\F}_{d_1,d_2} = ((p-1)! \cdot 4) \cdot 1_\F \ne 0_{\F}, \\
\a^{\F}_{d_2,d_2} = \a^{\F}_{d_3,d_2} = \a^{\F}_{d_2,d_3} = \a^{\F}_{d_3,d_3} = 0_{\F}.
\end{gather*}
Thus, $\hJhom_{d_2}^{\F} = -\hJhom_{d_3}^{\F}$ by Lemma \ref{L:a}(i).
\end{Example}

In this subsection, we obtain a sufficient condition for $\hJHOM_{\mathrm{sstd}}^{\F}$ to be linearly independent.

We first introduce a pre-order $\unrhd$ on $\TT(\lambda,(\alpha|\beta))$, which induces another on $\sym{n}$.

For each $\T \in \TT(\lambda,(\alpha|\beta))$, write $C_j(\T)$ for the multi-set associated to the $j$th column of $\T$, i.e.,
\begin{gather*}
C_j(\T) = \{ \T(1,j), \T(2,j), \dotsc,\T(r,j) \}\subseteq \{\bb{c}_1,\bb{c}_2,\ldots,\bb{d}_1,\bb{d}_2,\ldots\},\end{gather*} where $r=(\lambda')_j$. Recall that we have the total order \begin{gather*}\bb{c}_1<\bb{c}_2<\cdots<\bb{d}_1<\bb{d}_2<\cdots.\end{gather*}
Let $\T, \T' \in \TT(\lambda,(\alpha|\beta))$. Suppose that $C_j(\T) = \{ y_1,\ldots,y_r\}$ and $C_j(\T') = \{z_1,\ldots,z_r \}$, where $y_1\leq y_2 \leq \dotsb \leq y_r$ and $z_1 \leq z_2 \leq \dotsb \leq z_r$. Write $C_j(\T) \rhd C_j(\T')$ if and only if there exists $k$ such that $y_s = z_s$ for all $1 \leq s < k$ and $y_k>z_k$. We write $\T \unrhd \T'$ if and only if $C_j(\T) = C_j(\T')$ for all $j$, or there exists $t$ such that $C_j(\T) = C_j(\T')$ for all $j < t$ and $C_t(\T) \rhd C_t(\T')$. It is easy to check that $\unrhd$ is a pre-order on $\TT(\lambda,(\alpha|\beta))$ (i.e., it is a reflexive and transitive binary relation on $\TT(\lambda,(\alpha|\beta))$). In addition, write $\T \sim \T'$ if and only if $\T \unrhd \T'$ and $\T' \unrhd \T$ (equivalently, $C_j(\T) = C_j(\T')$ for all $j$), and $\T \rhd \T'$ if and only if $\T \unrhd \T'$ but $\T' \ntrianglerighteq \T$ (equivalently, there exists~$t$ such that $C_j(\T) = C_j(\T')$ for all $j < t$ and $C_t(\T) \rhd C_t(\T')$).

Recall that $\T_d = d \cdot \T_0$. Let $d,d' \in \sym{n}$, and write $d \unrhd d'$, $d \sim d'$ and $d \rhd d'$ if and only if $\T_d \unrhd \T_{d'}$, $\T_d \sim \T_{d'}$ and $\T_d \rhd \T_{d'}$ respectively.

\begin{Lemma} \label{L:pre-order}
	Let $\lambda$ be a partition of $n$, $(\alpha|\beta)$ be a bicomposition of $n$ and $d,d' \in \sym{n}$.
	\begin{enumerate}\itemsep=0pt
\item[$(i)$] $d \sim d'$ if and only if $d' \in C_{\t_0} d \sym{\alpha|\beta}$.
\item[$(ii)$] If $\T_d$ is row semistandard, and $d' \in R_{\t_0} d \sym{\alpha|\beta}$, then either $\T_{d'} = \T_d$ or $\T_{d'} \rhd \T_d$.
\item[$(iii)$] If $\T_{d} \in \TTS(\lambda, (\alpha|\beta))$ and $\a_{d', d} \ne 0$, then $d' \unrhd d$.
	\end{enumerate}
\end{Lemma}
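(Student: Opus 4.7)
For part (i), I plan to translate the condition $d \sim d'$ — defined as $C_j(\T_d) = C_j(\T_{d'})$ for all $j$ — into the statement that $\T_{d'}$ arises from $\T_d$ by permuting entries within each column, that is, $\T_{d'} = \T_d \circ \pi$ for some $\pi \in C_{[\lambda]}$. Under the isomorphism $\sym{n} \cong \sym{[\lambda]}$ induced by $\t_0$, this $\pi$ corresponds to some $\sigma \in C_{\t_0}$, and the relation $\T_{d'} = \sigma \cdot \T_d = \T_{\sigma d}$, combined with $\stab_{\sym{n}}(\T_0) = \sym{\alpha|\beta}$, gives $d' \in \sigma d \sym{\alpha|\beta} \subseteq C_{\t_0} d \sym{\alpha|\beta}$. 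The converse is immediate: an element of $C_{\t_0} d \sym{\alpha|\beta}$ acts on $\T_0$ as a column permutation of $\T_d$, which preserves every column multiset.

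For part (ii), I would write $d' = \tau d \xi$ with $\tau \in R_{\t_0}$ and $\xi \in \sym{\alpha|\beta}$, so that $\T_{d'} = \tau \cdot \T_d$ is obtained from $\T_d$ by independently permuting the entries within each row. The key combinatorial observation is that, since $\T_d$ is row semistandard, the first $t$ entries of any row $i$ with $\lambda_i \geq t$ are exactly the $t$ smallest entries (in sorted order) of that row's multiset; hence in $\T_{d'}$ the first $t$ entries of row $i$, being any $t$ entries of the same multiset, are componentwise (after sorting) at least as large. Summing over rows shows that the sorted multiset $M_t$ of all entries in the first $t$ columns of $\T_{d'}$ dominates that of $\T_d$ componentwise. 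An induction on $t$, in which the common prefix $C_j(\T_d) = C_j(\T_{d'})$ for $j < t$ can be cancelled from $M_t$ to yield componentwise dominance of $C_t(\T_{d'})$ over $C_t(\T_d)$, then gives either $\T_{d'} \sim \T_d$ or a first differing column $t$ with $C_t(\T_{d'}) \rhd C_t(\T_d)$, i.e., $\T_{d'} \rhd \T_d$. To strengthen the former alternative to $\T_{d'} = \T_d$, I would run a second column-by-column induction: at step $j$, $\T_d(i,j)$ is the minimum of the remaining row-$i$ entries while $\T_{d'}(i,j)$ lies in the same multiset and so is at least $\T_d(i,j)$, and the equality of the column-$j$ multisets then forces pointwise equality in that column, completing the induction.

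For part (iii), the hypothesis $\a_{d',d} \neq 0$ forces $\Omega_{d',d}$ to be nonempty, giving some $\sigma \in C_{\t_0}$ with $\sigma d' \in R_{\t_0} d \sym{\alpha|\beta}$. Since $\T_d$ is semistandard it is, in particular, row semistandard, so part (ii) applied to $\sigma d'$ yields $\sigma d' \unrhd d$; on the other hand, part (i) gives $\sigma d' \sim d'$. Because $\unrhd$ depends only on the column multisets of the associated tableaux, $d' \unrhd d$ follows at once.

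The main obstacle will be the bookkeeping in part (ii): I need to confirm carefully both the componentwise-dominance step (cancelling a shared sorted prefix from the first-$t$-column multisets) and the upgrade from $\T_{d'} \sim \T_d$ to $\T_{d'} = \T_d$, each of which uses the row-semistandard hypothesis in a slightly different way.
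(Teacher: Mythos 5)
Your proposal is correct and follows essentially the same route as the paper: for part~(iii) you unwind $\a_{d',d}\ne0$ to get $\sigma d'=\tau d\xi$ and then chain $d'\sim\sigma d'\unrhd d$ via (i) and (ii), which is precisely the paper's argument, while for (i) and (ii) the paper simply declares them ``straightforward'' and your careful column-multiset and sorted-prefix arguments supply valid details for those steps.
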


\begin{proof}
	Parts (i) and (ii) are straightforward. For part (iii), if $\a_{d',d} \ne 0$, then there exist $\sigma \in C_{\t_0}$, $\tau \in R_{\t_0}$ and $\xi \in \sym{\alpha|\beta}$ such that $\sigma d' = \tau d \xi$. Thus,
	\begin{gather*}d' \sim \sigma d' = \tau d \xi \unrhd d\end{gather*} by parts (i) and (ii) since $\T_d$ is, in particular, row semistandard.
\end{proof}

We can now state a sufficient condition for $\hJHOM_{\mathrm{sstd}}^\F$ to be linearly independent.

\begin{Proposition} \label{P:phihatli}
	Suppose that $\a^{\F}_{\dR,\dR} \ne 0$ for all $\dR \in \Gamma_{\mathrm{sstd}}$. Then $\hJHOM_{\mathrm{sstd}}^{\F}$ is linearly independent.
\end{Proposition}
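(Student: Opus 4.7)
The plan is to test the alleged linear dependence on the single element $e_{\t_0}$ (where $\rho_{\t_0} = 1_{\sym{n}}$), extract a square linear system indexed by $\Gamma_{\mathrm{sstd}} \times \Gamma_{\mathrm{sstd}}$, and show that its coefficient matrix is triangular with nonzero diagonal. Concretely, if $\sum_{\dR' \in \Gamma_{\mathrm{sstd}}} c_{\dR'} \hJhom_{\dR'}^{\F} = 0$ in $\Hom_{\F\sym{n}}\big(S_\F^{\lambda}, M_\F(\alpha|\beta)\big)$, then evaluating at $e_{\t_0}$ and comparing coefficients of the basis vectors $(d \otimes \bb{1} \otimes \epsilon)$ for $d \in \Gamma$ yields $\sum_{\dR' \in \Gamma_{\mathrm{sstd}}} \a^{\F}_{d,\dR'}\, c_{\dR'} = 0$ for every $d \in \Gamma$. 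Restricting to $d \in \Gamma_{\mathrm{sstd}}$ reduces the problem to showing that $N := \big(\a^{\F}_{\dR,\dR'}\big)_{\dR,\dR' \in \Gamma_{\mathrm{sstd}}}$ is invertible.

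The first key point is that $\unrhd$ becomes a genuine partial order once restricted to $\Gamma_{\mathrm{sstd}}$. Indeed, if $\dR, \dR' \in \Gamma_{\mathrm{sstd}}$ satisfy $\dR \sim \dR'$, then $\T_\dR$ and $\T_{\dR'}$ have identical column multisets; but a semistandard $\lambda$-tableau of type $(\alpha|\beta)$ is determined by these multisets, because condition~(iii) forces each column to be the unique weakly increasing arrangement of its multiset (the only repeats permitted are among the indistinguishable $\bb{d}_k$ entries). Hence $\T_\dR = \T_{\dR'}$, and since $\Gamma$ is a transversal of $\sym{\alpha|\beta}$ in $\sym{n}$ and $\stab_{\sym{n}}(\T_0) = \sym{\alpha|\beta}$, we conclude $\dR = \dR'$. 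This is the main subtlety of the argument; the rest is linear algebra.

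Next, fix any linear extension $\dR_1, \dR_2, \dotsc, \dR_N$ of $\unrhd$ on $\Gamma_{\mathrm{sstd}}$, so that $\dR_i \rhd \dR_j$ forces $i < j$. For $i > j$, we have $\dR_i \ne \dR_j$, hence $\dR_i \not\sim \dR_j$ by the previous paragraph; also $\dR_i \not\rhd \dR_j$ by the linear extension. Thus $\dR_i \not\unrhd \dR_j$, and since $\T_{\dR_j}$ is semistandard, Lemma~\ref{L:pre-order}(iii) gives $\a_{\dR_i,\dR_j} = 0$ in $\Z$, and therefore $\a^{\F}_{\dR_i,\dR_j} = 0$. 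This shows $N$ is upper triangular with respect to the chosen ordering. The diagonal entries $\a^{\F}_{\dR_i,\dR_i}$ are nonzero by hypothesis, so $\det N \ne 0$; consequently all $c_{\dR'}$ vanish, establishing that $\hJHOM^{\F}_{\mathrm{sstd}}$ is linearly independent.
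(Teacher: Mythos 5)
Your proof is correct and takes essentially the same approach as the paper: evaluate at $e_{\t_0}$, restrict attention to $d \in \Gamma_{\mathrm{sstd}}$, and use Lemma~\ref{L:pre-order}(iii) together with the antisymmetry of $\unrhd$ on $\Gamma_{\mathrm{sstd}}$ to see that the resulting matrix is triangular with nonzero diagonal; the paper phrases this as choosing a $\unrhd$-minimal $\dR_1$ and deducing $c_1 = 0$, while you phrase it as a linear extension and a determinant, but these are the same argument. One small slip: the fact that a column-semistandard tableau is determined by its column multisets follows from condition~(i) (columns weakly increasing), not from condition~(iii).
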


\begin{proof}
	Suppose the contrary that $\hJHOM_{\mathrm{sstd}}^{\F}$ is linearly dependent, say $\sum\limits_{i=1}^r c_i \hJhom^{\F}_{\dR_i} = 0$, where, for each $1 \leq i \leq r$, $\dR_i \in \Gamma_{\mathrm{sstd}}$ and $c_i \in \F \setminus \{0\}$. Since the $\T_{\dR_i}$'s are all column semistandard and distinct, we see that $\dR_i \not\sim \dR_j$ for all $i \ne j$. Thus, relabelling if necessary, we may assume that $\dR_1 \ntrianglerighteq \dR_i$ for all $i \geq 2$. Then
	\begin{gather*}
	0 = \left (\sum _{i=1}^r c_i \hJhom^{\F}_{\dR_i}\right )(e_{\t_0} ) =
	\sum_{i=1}^r c_i \sum_{d \in \Gamma} \a^{\F}_{d,\dR_i} (d \otimes \bb{1} \otimes \epsilon) =
	\sum_{d \in \Gamma} \left (\sum_{i=1}^r c_i \a^{\F}_{d,\dR_i}\right ) (d \otimes \bb{1} \otimes \epsilon),
	\end{gather*}
	so that $\sum\limits_{i=1}^r c_i \a^{\F}_{d,\dR_i} = 0$ for all $d \in \Gamma$. In particular, $\sum\limits_{i=1}^r c_i \a^{\F}_{\dR_1,\dR_i} = 0$. But for each $2 \leq i \leq r$, we have $\dR_1 \ntrianglerighteq \dR_i$, so that $\a_{\dR_1,\dR_i} = 0$ by Lemma \ref{L:pre-order}(iii), and hence $\a^{\F}_{\dR_1,\dR_i} = 0$. Consequently, $c_1 \a^{\F}_{\dR_1,\dR_1} = 0$, contradicting $c_1 \ne 0$ and $\a^{\F}_{\dR_1,\dR_1} \ne 0$.
\end{proof}

The next lemma gives us some idea of what $\a_{\dR,\dR}^{\F}$ is for $\dR \in \Gamma_{\mathrm{sstd}}$.

\begin{Lemma} \label{L:a_[d,d]}
Let $\dR \in \R$ such that $\T_\dR \in \TTS(\lambda,(\alpha|\beta))$. Then \begin{gather*}\a_{\dR,\dR} = |\stab_{C_{\t_0}}(\T_\dR)|.\end{gather*}
\end{Lemma}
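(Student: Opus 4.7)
The plan is to apply Lemma~\ref{L:a}(iii) directly to the pair $(d,\dR) = (\dR,\dR)$. Since $\T_\dR$ is semistandard, Corollary~\ref{C:sstdRC} gives $\dR \in \R \cap \C$, which is exactly what is needed to invoke Lemma~\ref{L:a}(iii). Noting that $C_{\t_0} \cap \dR\sym{\alpha|\beta}\dR^{-1} = C_{\t_0} \cap \stab_{\sym{n}}(\T_\dR) = \stab_{C_{\t_0}}(\T_\dR)$, the lemma reduces the identity to be proved to the assertion $\sum_{i=1}^{r} \e^{(i)} = 1$, where $r$ counts the distinct left cosets of $\sym{\alpha|\beta}$ contained in $C_{\t_0}\dR\sym{\alpha|\beta} \cap R_{\t_0}\dR\sym{\alpha|\beta}$. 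The trivial coset $\dR\sym{\alpha|\beta}$ clearly lies in this intersection: choosing $\sigma_1 = 1_{\sym{n}} \in C_{\t_0}$, we get $\e^{(1)} = \sgn(1)\e_\dR(\dR) = 1$. It therefore suffices to show that this is the only coset, i.e., $r = 1$.

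For the uniqueness claim I would argue as follows. Suppose some coset $d\sym{\alpha|\beta}$ appears in $C_{\t_0}\dR\sym{\alpha|\beta} \cap R_{\t_0}\dR\sym{\alpha|\beta}$, so that $d = \sigma\dR\xi_1 = \tau\dR\xi_2$ for some $\sigma \in C_{\t_0}$, $\tau \in R_{\t_0}$ and $\xi_1,\xi_2 \in \sym{\alpha|\beta}$. Then $\sigma\dR\sym{\alpha|\beta} = \tau\dR\sym{\alpha|\beta}$, so $\T_{\sigma\dR} = \T_{\tau\dR}$. Lemma~\ref{L:pre-order}(i) gives $\sigma\dR \sim \dR$, i.e., $\T_{\sigma\dR}$ and $\T_\dR$ have identical column multisets; in particular $\T_\dR \unrhd \T_{\sigma\dR} = \T_{\tau\dR}$. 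On the other hand, Lemma~\ref{L:pre-order}(ii), applied to the row semistandard tableau $\T_\dR$ and to $\tau\dR \in R_{\t_0}\dR\sym{\alpha|\beta}$, forces either $\T_{\tau\dR} = \T_\dR$ or $\T_{\tau\dR} \rhd \T_\dR$. The strict alternative is incompatible with $\T_\dR \unrhd \T_{\tau\dR}$, so $\T_{\tau\dR} = \T_\dR$ and consequently $d\sym{\alpha|\beta} = \dR\sym{\alpha|\beta}$. Hence $r=1$, and the lemma follows.

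There is no genuine obstacle in the argument; the only subtle point worth flagging is that the semistandardness hypothesis is doing two distinct jobs. Through Corollary~\ref{C:sstdRC} it secures $\dR \in \C$, which is what makes Lemma~\ref{L:a}(iii) available at $(\dR,\dR)$; through its row semistandard component it makes Lemma~\ref{L:pre-order}(ii) the correct tool to combine with Lemma~\ref{L:pre-order}(i), pinning $r$ down to $1$. Without either half of the semistandard condition, the clean cancellation argument would collapse.
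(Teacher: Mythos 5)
Your proof is correct and follows essentially the same route as the paper: both reduce the claim to showing that $C_{\t_0}\dR\sym{\alpha|\beta} \cap R_{\t_0}\dR\sym{\alpha|\beta}$ is the single coset $\dR\sym{\alpha|\beta}$, using Lemma~\ref{L:pre-order}(i) and (ii) to squeeze $\T_{\tau\dR}$ between $\T_\dR \unrhd \T_{\tau\dR}$ and the dichotomy $\T_{\tau\dR} = \T_\dR$ or $\T_{\tau\dR} \rhd \T_\dR$, and then invoke Lemma~\ref{L:a}(iii). Your writeup is a little more explicit than the paper's (e.g., spelling out $\e^{(1)} = 1$ and the role of Corollary~\ref{C:sstdRC} in supplying $\dR \in \C$), but the content is the same.
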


\begin{proof}	Observe that $C_{\t_0} \dR \sym{\alpha|\beta} \cap R_{\t_0} \dR \sym{\alpha|\beta} = \dR \sym{\alpha|\beta}$: if $\sigma \dR = \tau \dR \xi$, where $\sigma \in C_{\t_0}$, $\tau \in R_{\t_0}$ and \smash{$\xi \in \sym{\alpha|\beta}$}, then $\dR \sim \tau \dR$ by Lemma~\ref{L:pre-order}(i) so that $\T_{\tau \dR}\ntriangleright \T_\dR$, while $\tau \dR \sim \dR$ only if $\tau \dR \in \dR \sym{\alpha|\beta}$ by Lemma~\ref{L:pre-order}(ii). Applying Lemma~\ref{L:a}(iii) completes the proof since $\stab_{C_{\t_0}}(\T_\dR)=C_{\t_0}\cap \dR\sym{\alpha|\beta}\dR^{-1}$.
\end{proof}

Combining the last two results, we get the following immediate corollary.

\begin{Corollary} \label{C:li}
If $\Char(\F) \nmid |\stab_{C_{\t_0}}(\T_\dR)|$ for all $\dR \in \Gamma_{\mathrm{sstd}}$, then $\hJHOM_{\mathrm{sstd}}^{\F}$ is linearly independent.
\end{Corollary}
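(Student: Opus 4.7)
The plan is to directly chain the two preceding results. By Lemma \ref{L:a_[d,d]}, for every $\dR\in\Gamma_{\mathrm{sstd}}$ we have the equality of integers
\begin{gather*}
\a_{\dR,\dR}=|\stab_{C_{\t_0}}(\T_\dR)|.
\end{gather*}
Reducing modulo $\Char(\F)$ gives $\a^{\F}_{\dR,\dR}=|\stab_{C_{\t_0}}(\T_\dR)|\cdot 1_\F$, which is nonzero in $\F$ precisely when $\Char(\F)\nmid |\stab_{C_{\t_0}}(\T_\dR)|$.

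Under the hypothesis of the corollary, this nonvanishing holds for every $\dR\in\Gamma_{\mathrm{sstd}}$, so the hypothesis of Proposition \ref{P:phihatli} is verified. Applying that proposition yields the linear independence of $\hJHOM_{\mathrm{sstd}}^{\F}$, and the proof is complete.

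There is no real obstacle here; the statement is advertised in the excerpt as an \emph{immediate corollary}, and the only content is observing that the integer identity of Lemma \ref{L:a_[d,d]} transports under reduction modulo $p$ to the nonvanishing hypothesis needed to invoke Proposition \ref{P:phihatli}. The substantive work has already been carried out in establishing those two results — in particular the column-stabiliser computation in Lemma \ref{L:a_[d,d]} and the triangularity argument (via the pre-order $\unrhd$ and Lemma \ref{L:pre-order}(iii)) that underlies Proposition \ref{P:phihatli}.
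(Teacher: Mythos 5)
Your proof is correct and is precisely the argument the paper has in mind: it explicitly labels Corollary~\ref{C:li} an ``immediate corollary'' of Lemma~\ref{L:a_[d,d]} and Proposition~\ref{P:phihatli}, which is exactly the chain you assembled. Nothing to add.
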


\subsection{Main results}

Similar to how our $\hJhom_{\dR}$ induces $\hJhom_{\dR}^{\F}$, every $\phi \in \Hom_{\Z\sym{n}}\big(S_{\Z}^{\lambda}, M_{\Z}(\alpha|\beta)\big)$ also induces \linebreak $\phi^{\F} \in \Hom_{\F\sym{n}}\big(S_{\F}^{\lambda}, M_{\F}(\alpha|\beta)\big)$. Write
\begin{gather*}
\Phi^{\F} := \big\{ \phi^{\F} \colon \phi \in \Hom_{\Z\sym{n}}\big(S_{\Z}^{\lambda}, M_{\Z}(\alpha|\beta)\big)\big\}.
\end{gather*}
Then $\Phi^{\F}$ is a $\F$-subspace of $\Hom_{\F\sym{n}}\big(S_{\F}^{\lambda}, M_{\F}(\alpha|\beta)\big)$ containing $\hJHOM^{\F}_{\R}$~-- and hence $\hJHOM^{\F}_{\mathrm{sstd}}$~-- as a~subset.

\begin{Proposition} \label{P:LIimpliesbasis}
	Let $\Psi^{\F}$ be a linearly independent subset of $\Phi^{\F}$ of size $|\TTS(\lambda, (\alpha|\beta)|$. Then
\begin{enumerate}\itemsep=0pt
 \item [$(i)$] $\Psi^{\F}$ is a basis for $\Phi^{\F}$, and
 \item [$(ii)$] if $S_{\F}^{\lambda}$ lies in a block of $\F\sym{n}$ which is simple as an algebra, then \begin{gather*}\Hom_{\F\sym{n}}\big(S^{\lambda}_{\F}, M_\F(\alpha|\beta)\big)=\Phi^\F.\end{gather*}
\end{enumerate}
\end{Proposition}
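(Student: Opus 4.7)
The plan for part (i) is to bound $\dim_\F \Phi^\F$ from above by $|\TTS(\lambda,(\alpha|\beta))|$ using the integral structure, so that the linearly independent set $\Psi^\F$ of exactly this size must span. Since $S_\Z^\lambda$ is $\Z$-free of finite rank and $M_\Z(\alpha|\beta)$ is $\Z$-free, $\Hom_{\Z\sym{n}}\big(S_\Z^\lambda, M_\Z(\alpha|\beta)\big)$ is a $\Z$-submodule of a finite-rank free abelian group and hence itself $\Z$-free. Because $S_\Z^\lambda$ is finitely presented over $\Z$, base change commutes with Hom, and so
\begin{gather*}
\mathrm{rank}_\Z\Hom_{\Z\sym{n}}\big(S_\Z^\lambda,M_\Z(\alpha|\beta)\big) = \dim_\Q\Hom_{\Q\sym{n}}\big(S_\Q^\lambda,M_\Q(\alpha|\beta)\big).
\end{gather*}
By Theorem~\ref{T:Specht factors of sgn permutation} together with the semisimplicity of $\Q\sym{n}$ and the fact that $\End_{\Q\sym{n}}(S_\Q^\lambda)=\Q$, this dimension is exactly $|\TTS(\lambda,(\alpha|\beta))|$. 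The surjection $\F\otimes_\Z\Hom_{\Z\sym{n}}\big(S_\Z^\lambda,M_\Z(\alpha|\beta)\big)\twoheadrightarrow\Phi^\F$ given by $c\otimes\phi\mapsto c\phi^\F$ then yields $\dim_\F\Phi^\F\leq |\TTS(\lambda,(\alpha|\beta))|$, and part~(i) follows from the hypothesis on $\Psi^\F$; in particular, $\dim_\F\Phi^\F = |\TTS(\lambda,(\alpha|\beta))|$.

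For part~(ii), the plan is to show that $\Hom_{\F\sym{n}}\big(S_\F^\lambda, M_\F(\alpha|\beta)\big)$ also has dimension $|\TTS(\lambda,(\alpha|\beta))|$; since $\Phi^\F$ already sits inside this Hom with that dimension, equality will follow. Let $B$ be the block of $\F\sym{n}$ containing $S_\F^\lambda$ and assume $B$ is simple as an algebra. Then $B$ is semisimple with a unique simple module, namely $S_\F^\lambda$ (which is then also projective), and by the Nakayama conjecture for symmetric groups $\lambda$ is the only partition labelling a Specht module in $B$. Let $e_B$ be the block idempotent. Applying $e_B$ to the Specht filtration of $M_\F(\alpha|\beta)$ provided by Theorem~\ref{T:Specht factors of sgn permutation} annihilates every factor $S_\F^\nu$ with $\nu\ne\lambda$ and retains precisely $|\TTS(\lambda,(\alpha|\beta))|$ copies of $S_\F^\lambda$. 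Since $B$ is semisimple, the resulting filtration of $e_B M_\F(\alpha|\beta)$ splits, giving $e_B M_\F(\alpha|\beta)\cong |\TTS(\lambda,(\alpha|\beta))|\cdot S_\F^\lambda$. Combined with $\End_{\F\sym{n}}(S_\F^\lambda)=\F$, we conclude
\begin{gather*}
\dim_\F\Hom_{\F\sym{n}}\big(S_\F^\lambda,M_\F(\alpha|\beta)\big) = \dim_\F\Hom_{\F\sym{n}}\big(S_\F^\lambda,e_B M_\F(\alpha|\beta)\big) = |\TTS(\lambda,(\alpha|\beta))|,
\end{gather*}
which together with part~(i) yields $\Phi^\F = \Hom_{\F\sym{n}}\big(S_\F^\lambda,M_\F(\alpha|\beta)\big)$.

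The main obstacle is really just the two dimension counts, both of which rely on standard tools: the integral rank count uses flatness of $\Q$ over $\Z$ and finite $\Z$-freeness of $S_\Z^\lambda$, while the block-component count uses absolute simplicity of Specht modules in defect-$0$ blocks and the Nakayama conjecture to force the $e_B$-image of the Specht filtration to be a direct sum of copies of $S_\F^\lambda$. Once those two dimensions are pinned at $|\TTS(\lambda,(\alpha|\beta))|$, both conclusions drop out with no further calculation.
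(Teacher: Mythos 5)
Your proposal is correct. For part~(ii) you and the paper make the same essential observation: in a simple block $S_\F^\lambda$ is simple and projective with $\End_{\F\sym{n}}\big(S_\F^\lambda\big)=\F$, so the Hom dimension coincides with the Specht multiplicity supplied by Theorem~\ref{T:Specht factors of sgn permutation}; you simply make the block-idempotent bookkeeping explicit where the paper states it in one line. The genuine difference is in part~(i). The paper computes $\dim_\Q\Hom_{\Q\sym{n}}\big(S^\lambda_\Q,M_\Q(\alpha|\beta)\big)=k$ (as you also do), then runs an elementary argument: given $\phi_0\in\Hom_{\Z\sym{n}}$, the $k+1$ maps $\phi_0^\Q,\phi_1^\Q,\dotsc,\phi_k^\Q$ satisfy a $\Q$-relation, which can be taken with coprime integer coefficients and reduced to $\F$; linear independence of $\Psi^\F$ then forces the coefficient of $\phi_0^\F$ to be a unit. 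You instead invoke flatness of $\Q\sym{n}$ over $\Z\sym{n}$ and finite presentation of $S_\Z^\lambda$ to identify $\Q\otimes_\Z\Hom_{\Z\sym{n}}$ with $\Hom_{\Q\sym{n}}$, conclude $\Hom_{\Z\sym{n}}$ is free of rank $k$, and then use the surjection $\F\otimes_\Z\Hom_{\Z\sym{n}}\twoheadrightarrow\Phi^\F$ to get $\dim_\F\Phi^\F\leq k$ directly. The two arguments are of essentially the same depth and use the same underlying fact (the integral Hom has $\Z$-rank $k$); the paper's version is the hands-on unfolding of your rank bound, while your version packages it cleanly via base change at the cost of quoting the finitely-presented-flat-base-change theorem. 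Both are fine.
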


\begin{proof}
	If $S_\F^{\lambda}$ lies in a block of $\F\sym{n}$ which is simple as an algebra, then the composition multiplicity of $S_\F^{\lambda}$ in $M_\F(\alpha|\beta)$ equals $\dim_{\F} \Hom _{\F \sym{n}}\big(S_\F^{\lambda}, M_\F(\alpha|\beta)\big)$, which is in turn equal to $|\TTS(\lambda, (\alpha|\beta)|$ by Theorem~\ref{T:Specht factors of sgn permutation}. Since $\Psi^{\F}$ is linearly independent with the same cardinality, it is a basis for $\Hom _{\F \sym{n}}(S_\F^{\lambda}, M_\F(\alpha|\beta))$. In particular, since \begin{gather*}\Hom _{\F \sym{n}}\big(S_\F^{\lambda}, M_\F(\alpha|\beta)\big) = \F\text{-span}\big(\Psi^{\F}\big) \subseteq \Phi^{\F} \subseteq \Hom _{\F \sym{n}}\big(S_\F^{\lambda}, M_\F(\alpha|\beta)\big),\end{gather*} we must have equality throughout, proving part (ii).
	
	For part (i), let $\Psi^{\F} = \big\{ \phi_1^{\F}, \dotsc, \phi_k^{\F} \big\}$, where $k = |\TTS(\lambda, (\alpha|\beta)|$, and let $\phi_0^{\F} \in \Phi^{\F}$. Since $\mathbb{Q}\sym{n}$ is~semisimple as an algebra, we apply the previous paragraph and obtain that \linebreak $\Hom_{\mathbb{Q}\sym{n}}(S_{\mathbb{Q}}^{\lambda}, M_{\mathbb{Q}}(\alpha|\beta))$ has dimension $k$. Thus there is a non-trivial relation on $\phi_0^{\mathbb{Q}}, \phi_{1}^{\mathbb{Q}}, \dotsc, \phi_{k}^{\mathbb{Q}}$, which we can write as $\sum\limits_{j =0}^k c_j \phi_{j}^{\mathbb{Q}} = 0$, where the $c_j$'s are coprime integers. This yields the non-trivial linear relation $\sum\limits_{j =0}^k c_j^{\F} \phi_{j}^{\F} = 0$. Since $\Psi^{\F}$ is linearly independent, this implies that $\phi_{0}^{\F}$ lies in the $\F$-span of~$\Psi^{\F}$.
\end{proof}

\begin{Remark}
As is well-known, $S^{\lambda}_{\F}$ lies in a simple block if and only if $\lambda$ is a {\em $p$-core partition} (in other words, $\lambda$ has no rimhook of size $p$) where $p=\Char(\F)$.
\end{Remark}

As an immediate corollary, we have

\begin{Corollary}The dimension of $\Phi^{\F}$ is at most $|\TTS(\lambda,(\alpha|\beta))|$.
\end{Corollary}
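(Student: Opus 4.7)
The plan is to realise $\Phi^{\F}$ as the image of an $\F$-linear map from a vector space whose dimension I can pin down by base-changing the integral Hom-space to $\Q$. Set $H_{\Z} := \Hom_{\Z\sym{n}}\bigl(S_{\Z}^{\lambda}, M_{\Z}(\alpha|\beta)\bigr)$. The first step is to note that $S_{\Z}^{\lambda}$ and $M_{\Z}(\alpha|\beta)$ are both finitely generated free $\Z$-modules, so $\Hom_{\Z}\bigl(S_{\Z}^{\lambda}, M_{\Z}(\alpha|\beta)\bigr)$ is a free $\Z$-module of finite rank; hence its $\Z$-submodule $H_{\Z}$ is also a free $\Z$-module of some finite rank $r$.

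The next step is to identify $r$ via flat base change to $\Q$. Because $S_{\Z}^{\lambda}$ is finitely presented as a $\Z\sym{n}$-module (it is $\Z$-free of finite rank) and $\Q$ is flat over $\Z$, the canonical map
\begin{gather*}
\Q \otimes_{\Z} H_{\Z} \longrightarrow \Hom_{\Q\sym{n}}\bigl(S_{\Q}^{\lambda}, M_{\Q}(\alpha|\beta)\bigr)
\end{gather*}
is an isomorphism. Since $\Q\sym{n}$ is semisimple and $S_{\Q}^{\lambda}$ is irreducible, the $\Q$-dimension of the right-hand side equals the composition multiplicity of $S_{\Q}^{\lambda}$ in $M_{\Q}(\alpha|\beta)$, which by Theorem~\ref{T:Specht factors of sgn permutation} is $|\TTS(\lambda,(\alpha|\beta))|$. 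Hence $r = |\TTS(\lambda,(\alpha|\beta))|$.

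To conclude, the $\Z$-linear map $H_{\Z} \to \Hom_{\F\sym{n}}\bigl(S_{\F}^{\lambda}, M_{\F}(\alpha|\beta)\bigr)$ sending $\phi \mapsto \phi^{\F}$ extends to an $\F$-linear map $\F \otimes_{\Z} H_{\Z} \to \Hom_{\F\sym{n}}\bigl(S_{\F}^{\lambda}, M_{\F}(\alpha|\beta)\bigr)$ whose image is the $\F$-subspace $\Phi^{\F}$. Since $\F \otimes_{\Z} H_{\Z}$ has $\F$-dimension $r$, the image has $\F$-dimension at most $r$, giving $\dim_{\F}\Phi^{\F} \leq |\TTS(\lambda,(\alpha|\beta))|$. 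There is no substantive obstacle in this argument: the key inputs, namely the semisimplicity of $\Q\sym{n}$ and the signed Young's rule (Theorem~\ref{T:Specht factors of sgn permutation}), are already in hand, and the only additional observation needed is the freeness of $H_{\Z}$ as a $\Z$-module, which is automatic as a submodule of a finitely generated free $\Z$-module.
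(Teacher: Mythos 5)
Your argument is correct, and it takes a genuinely different (more homological) route from the paper. You realise $\Phi^{\F}$ as the image of the base-change map $\F\otimes_{\Z}H_{\Z}\to\Hom_{\F\sym{n}}\big(S_{\F}^{\lambda},M_{\F}(\alpha|\beta)\big)$, pin down $\operatorname{rank}_{\Z}H_{\Z}=|\TTS(\lambda,(\alpha|\beta))|$ by flat base change to $\Q$ combined with semisimplicity of $\Q\sym{n}$ and signed Young's rule, and then read off the bound. The paper instead derives the corollary as an immediate consequence of Proposition~\ref{P:LIimpliesbasis}(i), whose proof is a more hands-on version of the same idea: given any $k+1$ elements of $\Phi^{\F}$ (with $k=|\TTS(\lambda,(\alpha|\beta))|$), choose integral lifts, observe that their rational images lie in a $k$-dimensional space (again via semisimplicity and signed Young's rule applied over $\Q$), extract a $\Q$-linear relation, clear denominators to coprime integer coefficients, and reduce mod $\Char(\F)$ to get a nontrivial relation over $\F$. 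Both arguments hinge on exactly the same rational dimension count, so they are close cousins; your version is cleaner and more conceptual, while the paper's explicit clearing-of-denominators version is built into Proposition~\ref{P:LIimpliesbasis}, which it also reuses for the main Theorems~\ref{T:ssbasis} and~\ref{T:3}. One small imprecision: you justify that $S_{\Z}^{\lambda}$ is finitely presented over $\Z\sym{n}$ by saying it is $\Z$-free of finite rank; the correct reason is that $\Z\sym{n}$ is a Noetherian ring (being module-finite over $\Z$), so finitely generated implies finitely presented. This does not affect the validity of your proof.
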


The following is our first main result:

\begin{Theorem} \label{T:ssbasis}Suppose that $\F\sym{n}$ is semisimple as an algebra $($or equivalently, $\Char(\F) = 0$ or $\Char(\F)>n)$. Then $\hJHOM_{\mathrm{sstd}}^{\F}$ is a basis for $\Hom _{\F \sym{n}}\big(S_\F^{\lambda}, M_\F(\alpha|\beta)\big)$.
\end{Theorem}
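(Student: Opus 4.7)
The proof plan is to assemble the pieces already laid out in the section. The key observation is that when $\F\sym{n}$ is semisimple, the characteristic $p$ of $\F$ is either $0$ or strictly greater than $n$, so $p$ cannot divide any positive integer that divides $n!$. In particular, for every $\dR \in \Gamma_{\mathrm{sstd}}$ the group $\stab_{C_{\t_0}}(\T_{\dR})$ is a subgroup of $\sym{n}$, its order divides $n!$, and so $\Char(\F) \nmid |\stab_{C_{\t_0}}(\T_{\dR})|$. Hence Corollary~\ref{C:li} applies and $\hJHOM^{\F}_{\mathrm{sstd}}$ is linearly independent. Its cardinality is $|\Gamma_{\mathrm{sstd}}|$, which equals $|\TTS(\lambda,(\alpha|\beta))|$ by the very definition of $\Gamma_{\mathrm{sstd}}$.

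Next I would invoke Proposition~\ref{P:LIimpliesbasis} with $\Psi^{\F}= \hJHOM^{\F}_{\mathrm{sstd}}$. Since $\F\sym{n}$ is semisimple, \emph{every} block of $\F\sym{n}$ is simple as an algebra; in particular the block containing $S_{\F}^{\lambda}$ is simple. Thus part~(ii) of the proposition gives
\begin{gather*}
\Hom_{\F\sym{n}}\big(S_{\F}^{\lambda}, M_{\F}(\alpha|\beta)\big) = \Phi^{\F},
\end{gather*}
and part~(i) then asserts that $\hJHOM^{\F}_{\mathrm{sstd}}$ is a basis for $\Phi^{\F}$. Combining the two conclusions yields that $\hJHOM^{\F}_{\mathrm{sstd}}$ is a basis for $\Hom_{\F\sym{n}}(S_{\F}^{\lambda}, M_{\F}(\alpha|\beta))$, which is exactly the theorem.

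There is really no main obstacle here; the hard work was done earlier. The only thing that requires a moment of care is to justify why $S_{\F}^{\lambda}$ lies in a simple block, but this is immediate from the hypothesis that the whole algebra $\F\sym{n}$ is semisimple (equivalently $\Char(\F)=0$ or $\Char(\F)>n$, by Maschke's theorem together with the standard characterisation for symmetric group algebras recalled in the Preliminaries). All other ingredients---the linear independence criterion of Corollary~\ref{C:li}, the cardinality count $|\Gamma_{\mathrm{sstd}}|=|\TTS(\lambda,(\alpha|\beta))|$, the dimension formula for $\Hom$ arising from signed Young's rule (Theorem~\ref{T:Specht factors of sgn permutation}), and the lifting-from-$\Z$-to-$\F$ argument built into Proposition~\ref{P:LIimpliesbasis}---are ready to use off the shelf.
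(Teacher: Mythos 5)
Your proof is correct and follows essentially the same route as the paper's: apply Corollary~\ref{C:li} (after noting that the stabiliser orders divide $n!$ and so are prime to $\Char(\F)$) to get linear independence, then Proposition~\ref{P:LIimpliesbasis}(i) and~(ii) to conclude. The only difference is that you spell out explicitly why the hypothesis of Corollary~\ref{C:li} holds, which the paper leaves implicit.
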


\begin{proof}Since $\F \sym{n}$ is semisimple, every block of $\F\sym{n}$ is simple. Thus, $\hJHOM_{\mathrm{sstd}}^{\F}$ is a basis for $\Hom _{\F \sym{n}}\big(S_\F^{\lambda}, M_\F(\alpha|\beta)\big)$ by Corollary~\ref{C:li} and Proposition~\ref{P:LIimpliesbasis}.
\end{proof}

\begin{Corollary}
Let $\phi \in \Hom_{\Z\sym{n}}\big(S^{\lambda}_{\Z}, M_{\Z}(\alpha|\beta)\big)$. Then $\phi$ lies in the $\mathbb{Q}$-span of $\hJHOM_{\mathrm{sstd}}$.
\end{Corollary}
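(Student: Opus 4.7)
The proof will be a short direct deduction from Theorem~\ref{T:ssbasis} applied over $\mathbb{Q}$. The key observation is that since $S_\Z^\lambda$ and $M_\Z(\alpha|\beta)$ are both $\Z$-free, the natural base-change map
\begin{gather*}
\Hom_{\Z\sym{n}}\big(S_\Z^\lambda, M_\Z(\alpha|\beta)\big) \to \Hom_{\Q\sym{n}}\big(S_\Q^\lambda, M_\Q(\alpha|\beta)\big), \qquad \phi \mapsto \phi^\Q,
\end{gather*}
is injective, so we can view $\Hom_{\Z\sym{n}}(S_\Z^\lambda, M_\Z(\alpha|\beta))$ as sitting inside $\Hom_{\Q\sym{n}}(S_\Q^\lambda, M_\Q(\alpha|\beta))$ without loss of information.

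The plan is as follows. First, I would invoke Theorem~\ref{T:ssbasis} in the case $\F = \Q$: since $\Q\sym{n}$ is semisimple, the set $\hJHOM_{\mathrm{sstd}}^\Q$ is a basis for $\Hom_{\Q\sym{n}}(S_\Q^\lambda, M_\Q(\alpha|\beta))$. Next, given $\phi \in \Hom_{\Z\sym{n}}(S_\Z^\lambda, M_\Z(\alpha|\beta))$, I would pass to $\phi^\Q$ and expand it uniquely as $\phi^\Q = \sum_{\dR \in \Gamma_{\mathrm{sstd}}} c_\dR \hJhom_\dR^\Q$ with $c_\dR \in \Q$. Since each $\hJhom_\dR$ is itself the image of an element of $\Hom_{\Z\sym{n}}(S_\Z^\lambda, M_\Z(\alpha|\beta))$ under the base-change map, the identity $\phi^\Q = \sum c_\dR \hJhom_\dR^\Q$ together with the injectivity of base-change shows that $\phi$ agrees with $\sum c_\dR \hJhom_\dR$ as an element of the $\Q$-vector space spanned by $\hJHOM_{\mathrm{sstd}}$ inside $\Hom_{\Q\sym{n}}(S_\Q^\lambda, M_\Q(\alpha|\beta))$.

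There is no genuine obstacle here; the only point that requires a word of care is the identification of $\phi$ with $\phi^\Q$ (i.e., distinguishing between ``$\phi$ lies in the $\Q$-span of $\hJHOM_{\mathrm{sstd}}$'' and ``$\phi^\Q$ lies in the $\Q$-span of $\hJHOM_{\mathrm{sstd}}^\Q$''). Once one recognises that $S_\Z^\lambda$ and $M_\Z(\alpha|\beta)$ are $\Z$-free, so that base change to $\Q$ is an injection on Hom spaces, the two statements are equivalent and the corollary follows immediately from Theorem~\ref{T:ssbasis}.
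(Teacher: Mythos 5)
Your proof takes essentially the same route as the paper: apply Theorem~\ref{T:ssbasis} with $\F = \Q$ to write $\phi^\Q$ as a $\Q$-linear combination of the $\hJhom_\dR^\Q$'s and then pull this back to a relation on $\phi$ and the $\hJhom_\dR$'s. The paper states the final deduction without comment, whereas you justify it explicitly via $\Z$-freeness of $S_\Z^\lambda$ and $M_\Z(\alpha|\beta)$ and injectivity of base change; this is a clean way to make precise what the paper leaves implicit.
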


\begin{proof}
By Theorem \ref{T:ssbasis}, $\hJHOM_{\mathrm{sstd}}^{\mathbb{Q}}$ is a basis for $\Hom _{\mathbb{Q} \sym{n}}\big(S_\Q^{\lambda}, M_\Q(\alpha|\beta)\big)$, so that $\phi^{\Q} = \sum\limits_{\dR \in \Gamma_{\mathrm{sstd}}} c_{\dR} \hJhom^{\Q}_{\dR}$, where $c_{\dR} \in \mathbb{Q}$ for all $\dR \in \Gamma_{\mathrm{sstd}}$. But this implies that $\phi = \sum\limits_{\dR \in \Gamma_{\mathrm{sstd}}} c_{\dR} \hJhom_{\dR}$.
\end{proof}

Our next main result provides a sufficient condition for $\hJHOM_{\mathrm{sstd}}^{\F}$ to be a basis for $\Phi^{\F}$ when $\F\sym{n}$ is not semisimple. Note that Example~\ref{Eg: hom char} shows that this condition is insufficient in ensuring that $\hJHOM^{\F}_{\mathrm{sstd}}$ spans $\Hom_{\F\sym{n}}\big(S_{\F}^{\lambda}, M_{\F}(\alpha|\beta)\big)$.

\begin{Theorem} \label{T:3}
Suppose that $\Char(\F) = p >0$. If no column of $\T$ has $p$ or more nodes of the same colour for every $\T \in \TTS(\lambda, (\alpha|\beta))$, then $\hJHOM_{\mathrm{sstd}}^{\F}$ is a basis for the $\Phi^{\F}$.
\end{Theorem}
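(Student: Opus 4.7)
The plan is to reduce the theorem to a direct application of Corollary~\ref{C:li} together with Proposition~\ref{P:LIimpliesbasis}(i). The only real work is to translate the hypothesis on column multiplicities into the divisibility statement $\Char(\F)\nmid|\stab_{C_{\t_0}}(\T_\dR)|$ that Corollary~\ref{C:li} demands.

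To do this, I would first identify $\stab_{C_{\t_0}}(\T_\dR)$ as a Young subgroup. By Lemma~\ref{L:a_[d,d]} (or directly from $\stab_{\sym{n}}(\T_\dR)=\dR\sym{\alpha|\beta}\dR^{-1}$), the stabiliser equals $C_{\t_0}\cap \dR\sym{\alpha|\beta}\dR^{-1}$, which is an intersection of conjugates of Young subgroups, hence itself a Young subgroup generated by the transpositions it contains. A transposition $(\t_0(i,j)\;\t_0(i',j))$ with $i\neq i'$ always lies in $C_{\t_0}$, and lies in $\stab_{\sym{n}}(\T_\dR)$ if and only if $\T_\dR(i,j)=\T_\dR(i',j)$. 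Consequently
\begin{gather*}
\stab_{C_{\t_0}}(\T_\dR)\;\cong\;\prod_{j,\,c}\sym{m_{j,c}(\T_\dR)},
\end{gather*}
where $m_{j,c}(\T_\dR)$ is the number of nodes of colour $c$ in the $j$-th column of $\T_\dR$. Its order is therefore $\prod_{j,c}m_{j,c}(\T_\dR)!$.

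The hypothesis $m_{j,c}(\T)<p$ for all $\T\in\TTS(\lambda,(\alpha|\beta))$ and all $j,c$ is exactly equivalent to $p\nmid m_{j,c}(\T_\dR)!$ for each $\dR\in\Gamma_{\mathrm{sstd}}$ and each $j,c$, which in turn is equivalent to $p\nmid|\stab_{C_{\t_0}}(\T_\dR)|$ for every $\dR\in\Gamma_{\mathrm{sstd}}$. Corollary~\ref{C:li} then yields that $\hJHOM_{\mathrm{sstd}}^{\F}$ is linearly independent. Since a linearly independent family is automatically free of repetitions, $|\hJHOM_{\mathrm{sstd}}^{\F}|=|\Gamma_{\mathrm{sstd}}|=|\TTS(\lambda,(\alpha|\beta))|$, and since $\hJHOM_{\mathrm{sstd}}^{\F}\subseteq\Phi^{\F}$, Proposition~\ref{P:LIimpliesbasis}(i) delivers the conclusion that $\hJHOM_{\mathrm{sstd}}^{\F}$ is a basis of $\Phi^{\F}$.

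I do not foresee a genuine obstacle. The identification of $\stab_{C_{\t_0}}(\T_\dR)$ as the Young subgroup corresponding to the colour-multiplicity partitions of the columns of $\T_\dR$ is the crux, but it is essentially a bookkeeping argument from the definitions. All remaining ingredients (Lemma~\ref{L:a_[d,d]}, Corollary~\ref{C:li}, and Proposition~\ref{P:LIimpliesbasis}) are already in hand, so the proof should be quite short.
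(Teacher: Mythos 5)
Your proof is correct and follows essentially the same route as the paper: the paper's proof of Theorem~\ref{T:3} simply observes that $p\nmid|\stab_{C_{\t_0}}(\T_\dR)|$ is equivalent to the column-multiplicity hypothesis and then invokes Corollary~\ref{C:li} and Proposition~\ref{P:LIimpliesbasis}. You fill in the ``observe'' step explicitly by identifying $\stab_{C_{\t_0}}(\T_\dR)$ with the Young subgroup $\prod_{j,c}\sym{m_{j,c}(\T_\dR)}$, which is a correct and helpful elaboration but not a different argument.
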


\begin{proof}Let $\dR \in \Gamma_{\mathrm{sstd}}$. Observe that $p \nmid |\mathrm{stab}_{C_{\t_0}}(\T_\dR)|$, if and only if no column of $\T_\dR$ has $p$ or more nodes of the same colour. Thus, this follows from Corollary~\ref{C:li} and Proposition~\ref{P:LIimpliesbasis} immediately.
\end{proof}

In particular:

\begin{Corollary}
 Suppose that $\ell(\lambda)< \Char(\F)$. Then $\hJHOM_{\mathrm{sstd}}^{\F}$ is a basis for $\Phi^\F$.
\end{Corollary}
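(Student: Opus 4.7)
The plan is to reduce the corollary directly to Theorem~\ref{T:3}. The hypothesis $\ell(\lambda)<\Char(\F)$ gives a uniform upper bound on the length of every column of $[\lambda]$, and this bound is already strong enough to rule out the possibility that any single colour appears $p=\Char(\F)$ or more times in any column of any tableau.

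First I would observe that since $\lambda$ is a partition, the first column of $[\lambda]$ has exactly $\ell(\lambda)$ nodes and every subsequent column has at most as many as its predecessor. Hence for every $\T\in\TT(\lambda,(\alpha|\beta))$ and every $j$, the multiset $C_j(\T)$ has cardinality at most $\ell(\lambda)$. By hypothesis this cardinality is strictly smaller than $p$, so \emph{a fortiori} no single colour can occur $p$ or more times in any column of $\T$. In particular this holds for every $\T\in\TTS(\lambda,(\alpha|\beta))$, so the hypothesis of Theorem~\ref{T:3} is satisfied and the conclusion that $\hJHOM_{\mathrm{sstd}}^{\F}$ is a basis for $\Phi^{\F}$ follows at once. (If one wishes to allow $\Char(\F)=0$, the inequality is interpreted as vacuously true and the conclusion then follows instead from Theorem~\ref{T:ssbasis}, since semisimplicity of $\F\sym{n}$ makes $\Phi^{\F}=\Hom_{\F\sym{n}}(S_\F^\lambda,M_\F(\alpha|\beta))$.)

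There is no genuine obstacle in the argument; the content of the corollary lies not in its proof but in packaging a clean, purely combinatorial sufficient condition on $\lambda$ alone (independent of $(\alpha|\beta)$) that the reader can verify at a glance. Its utility is that it replaces the tableau-by-tableau column check in Theorem~\ref{T:3} by a single inequality involving $\ell(\lambda)$ and $\Char(\F)$.
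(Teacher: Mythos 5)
Your reduction to Theorem~\ref{T:3} is exactly the paper's intended argument (the corollary is stated with ``In particular:'' and no separate proof): since $|C_j(\T)|\leq\ell(\lambda)<\Char(\F)$ for every column of every $\T$, no colour can occur $\Char(\F)$ or more times in a column, so the hypothesis of Theorem~\ref{T:3} is automatic. One small caveat: your parenthetical claim that the inequality ``is vacuously true'' when $\Char(\F)=0$ is not really a valid reading of $\ell(\lambda)<0$, but this is immaterial since the $\Char(\F)=0$ case is already handled by Theorem~\ref{T:ssbasis} and the corollary is clearly aimed at positive characteristic.
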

	
We end our paper with the following remark and leave the details to the reader.

\begin{Remark} Recall from Definition \ref{D:RC} that the set $\R$ and hence the sets $\JHOM_\R$ and $\JHOM_{\mathrm{sstd}}$ depend on the fixed $\lambda$-tableau $\t_0$ and $(\alpha|\beta)$ we chose. Suppose that we had chosen another $\lambda$-tableau $\t_0'$ and arrived at the sets $\R'$, $\JHOM_{\R'}$ and $\JHOM'{}_{\mathrm{sstd}}$. It is easy to verify that
\begin{gather*}
\R'=\pi\R,\qquad \JHOM_{\R'}=\JHOM_\R,\qquad \JHOM'_{\mathrm{sstd}}= \JHOM_{\mathrm{sstd}},
\end{gather*} where $\pi=\t_0'\circ (\t_0)^{-1} \in \sym{n}$.

On the other hand, let $\{1,2,\dotsc,n\} = \bigcup\limits_{i=1}^r A_i \cup \bigcup\limits_{j=1}^s B_j$ (disjoint union throughout), with $A_i,B_j \ne \varnothing$ for all $i$ and $j$. The set $\R$ in Definition \ref{D:RC} and the homomorphisms $\Jhom_{\dR}$ in Section~\ref{S:construction} can be generalised to yield the set $\R_{A|B}$ and homomorphisms $\Jhom_{\dR}^{A|B} \in\Hom_{\Z\sym{n}}\big(S^\lambda_\Z,M_\Z(A|B)\big)$, where
\begin{gather*}
M_\Z(A|B) = \Ind_{\sym{A|B}}^{\sym{n}} \big(\Z_{\sym{A}} \boxtimes \sgn_{\sym{B}}\big) ,\\
\sym{A} = \prod_{i=1}^r \sym{A_i}, \qquad \sym{B} = \prod_{j=1}^s \sym{B_j}, \qquad \sym{A|B} = \sym{A}\sym{B},
\end{gather*}
giving us a subset $\JHOM^{A|B}_{\R_{A|B}}$ of $\Hom_{\Z\sym{n}}\big(S^\lambda_\Z,M_\Z(A|B)\big)$.

Let $g\in\sym{n}$, and let $A_i'=g^{-1}(A_i)$, $B_j'=g^{-1}(B_j)$ for all $i=1,\ldots,r$ and $j=1,\ldots,s$. Then $\{1,2,\dotsc,n\} = \bigcup\limits_{i=1}^r A'_i \cup \bigcup\limits_{j=1}^s B'_j$ (disjoint union throughout), and it is straightforward to show that
\begin{gather*}
 \R_{A'|B'}=\R_{A|B}g,\qquad \JHOM^{A'|B'}_{\R_{A'|B'}} = \varrho_g \circ \JHOM^{A|B}_{\R_{A|B}},
\end{gather*} where $\varrho_g$ is the natural $\Z\sym{n}$-module isomorphism $M_\Z(A|B) \to M_\Z(A'|B')$ defined by $\varrho_g(x \otimes \bb{1} \otimes \epsilon) = xg \otimes \bb{1} \otimes \epsilon$ for all $x \in \sym{n}$.
\end{Remark}

\subsection*{Acknowledgements}

Supported by Singapore MOE Tier 2 AcRF MOE2015-T2-2-003. We thank the guest editors for bringing to our attention the work of Du and Rui on signed $q$-permutation modules of the Iwahori--Hecke algebras of type~$A$~\cite{DR}, and the referees for their helpful comments.

\pdfbookmark[1]{References}{ref}
\LastPageEnding


\begin{thebibliography}{99}
\footnotesize\itemsep=0pt

\bibitem{CT}
Chuang J., Tan K.M., On certain blocks of {S}chur algebras, \href{https://doi.org/10.1112/blms/33.2.157}{\textit{Bull.
 London Math. Soc.}} \textbf{33} (2001), 157--167.

\bibitem{DL}
Danz S., Lim K.J., Signed {Y}oung modules and simple {S}pecht modules,
 \href{https://doi.org/10.1016/j.aim.2016.11.016}{\textit{Adv. Math.}} \textbf{307} (2017), 369--416, \href{https://arxiv.org/abs/1504.02823}{arXiv:1504.02823}.

\bibitem{D}
Donkin S., Symmetric and exterior powers, linear source modules and
 representations of {S}chur superalgebras, \href{https://doi.org/10.1112/plms/83.3.647}{\textit{Proc. London Math. Soc.}}
 \textbf{83} (2001), 647--680.

\bibitem{DR}
Du J., Rui H., Quantum {S}chur superalgebras and {K}azhdan--{L}usztig
 combinatorics, \href{https://doi.org/10.1016/j.jpaa.2011.03.015}{\textit{J.~Pure Appl. Algebra}} \textbf{215} (2011),
 2715--2737, \href{https://arxiv.org/abs/1010.3800}{arXiv:1010.3800}.

\bibitem{WF}
Fulton W., Young tableaux. With applications to representation theory and
 geometry, \href{https://doi.org/10.1017/CBO9780511626241}{\textit{London Mathematical Society Student Texts}}, Vol.~35,
 Cambridge University Press, Cambridge, 1997.

\bibitem{H}
Hemmer D.J., Irreducible {S}pecht modules are signed {Y}oung modules,
 \href{https://doi.org/10.1016/j.jalgebra.2006.01.046}{\textit{J.~Algebra}} \textbf{305} (2006), 433--441, \href{https://arxiv.org/abs/math.RT/0512469}{math.RT/0512469}.

\bibitem{GJ}
James G.D., The representation theory of the symmetric groups, \href{https://doi.org/10.1007/BFb0067708}{\textit{Lecture
 Notes in Math.}}, Vol.~682, Springer, Berlin, 1978.

\bibitem{GJAK}
James G.D., Kerber A., The representation theory of the symmetric group,
 \textit{Encyclopedia of Mathematics and its Applications}, Vol.~16,
 Addison-Wesley Publishing Co., Reading, Mass., 1981.

\bibitem{JP}
James G.D., Peel M.H., Specht series for skew representations of symmetric
 groups, \href{https://doi.org/10.1016/0021-8693(79)90342-9}{\textit{J.~Algebra}} \textbf{56} (1979), 343--364.

\bibitem{P}
Peel M.H., Hook representations of the symmetric groups, \href{https://doi.org/10.1017/S0017089500001245}{\textit{Glasgow
 Math.~J.}} \textbf{12} (1971), 136--149.

\bibitem{Peel}
Peel M.H., Specht modules and symmetric groups, \href{https://doi.org/10.1016/0021-8693(75)90158-1}{\textit{J.~Algebra}} \textbf{36}
 (1975), 88--97.

\bibitem{R}
Richards M.J., Some decomposition numbers for {H}ecke algebras of general
 linear groups, \href{https://doi.org/10.1017/S0305004100074296}{\textit{Math. Proc. Cambridge Philos. Soc.}} \textbf{119}
 (1996), 383--402.

\bibitem{S}
Scopes J., Symmetric group blocks of defect two, \href{https://doi.org/10.1093/qmath/46.2.201}{\textit{Quart.~J. Math. Oxford
 Ser.~(2)}} \textbf{46} (1995), 201--234.

\bibitem{Se}
Sergeev A.N., Tensor algebra of the identity representation as a module over
 the {L}ie superalgebras {${\rm Gl}(n,m)$} and {$Q(n)$}, \href{https://doi.org/10.1070/SM1985v051n02ABEH002867}{\textit{Math. USSR
 Sb.}} \textbf{51} (1985), 419--427.

\end{thebibliography}
\end{document}